\theoremstyle{plain}
\renewcommand\thefigure{\thesection.\@arabic\c@figure}
\renewcommand{\thefigure}{\arabic{section}.\arabic{figure}}
\newtheorem{thm}{\bf Theorem}
\newenvironment{theorem}{\begin{thm}} {\end{thm}}
\newtheorem{cor}{\bf Corollary}
\newtheorem{prop}{Proposition}[section]
\newtheorem{lmm}{\bf Lemma}
\newenvironment{lemma}{\begin{lmm}}{\end{lmm}}
\theoremstyle{remark}
\newtheorem{rem}{\bf Remark}[section]
\theoremstyle{definition}
\newtheorem{defn}{\bf Definition}[section]
 \numberwithin{table}{section}
\renewcommand \wedge \times
\begin{document}
\bibliographystyle{plain}


\title[Approximation by Chebyshev expansions] {Optimal error estimates for Chebyshev  approximations of functions with limited regularity in fractional Sobolev-type spaces}
\author[
	W. Liu,\,    L. Wang\,  $\&$\,  H. Li
	]{
		\;\; Wenjie Liu${}^{1,2}$,   \;\;  Li-Lian Wang${}^{2}$ \;\; and\;\; Huiyuan Li${}^{3}$
		}
	
	\thanks{${}^{1}$Department of Mathematics, Harbin  Institute of Technology, 150001, China. Email: liu.wenjie@ntu.edu.sg. The research of this author is partially  supported by the China Postdoctoral Science Foundation Funded Project (No. 2017M620113), the National Natural Science Foundation of China (No. 11801120) and the Fundamental Research Funds for the Central Universities (Grant No.HIT.NSRIF.2019058).\\
		\indent ${}^{2}$Division of Mathematical Sciences, School of Physical
		and Mathematical Sciences, Nanyang Technological University,
		637371, Singapore. The research of this author is partially supported by Singapore MOE AcRF Tier 2 Grants: MOE2017-T2-2-014 and MOE2018-T2-1-059. Email: lilian@ntu.edu.sg.
		 \\
		\indent ${}^{3}$State Key Laboratory of Computer Science/Laboratory of Parallel Computing,  Institute of Software, Chinese Academy of Sciences, Beijing 100190, China. The research of this author is partially   supported by the National Natural Science Foundation of China (91130014, 11471312 and 91430216). Email:
	huiyuan@iscas.ac.cn}
	
\begin{abstract}  In this paper, we introduce  a new theoretical framework built upon  fractional Sobolev-type spaces involving Riemann-Liouville (RL) fractional integrals/derivatives, which is naturally arisen from exact  representations of Chebyshev expansion coefficients,  for optimal error estimates
of Chebyshev  approximations to functions with limited regularity.  
The essential pieces of the puzzle for the error  analysis  include (i) fractional integration by parts (under the weakest possible   conditions), and (ii)
 generalised Gegenbauer functions of fractional degree (GGF-Fs): a new family of special functions with notable   fractional calculus properties.   Under this framework,  we are able to  estimate  the optimal decay rate of Chebyshev expansion coefficients  for a large class of  functions with interior and endpoint singularities, which are deemed suboptimal or  complicated to characterize in  existing literature.
   We can then  derive optimal error estimates for spectral expansions and the related Chebyshev interpolation and quadrature measured in various norms, and  also  improve  the available results in usual Sobolev spaces of integer regularity exponentials  in several senses.
As a by-product,   this study results in some analytically perspicuous formulas particularly on  GGF-Fs,  which are potentially
useful in spectral algorithms. The idea and analysis techniques  can be extended to general Jacobi spectral approximations.
\end{abstract}
\keywords{Approximation by Chebyshev polynomials, fractional integrals/derivatives, fractional Sobolev-type spaces, singular functions, optimal estimates}
 \subjclass[2000]{41A10, 41A25,  41A50,   65N35, 65M60}
\maketitle

\vspace*{-15pt}

\thispagestyle{empty}

\section{Introduction}

It is known that polynomial approximation theory is of fundamental importance in numerical analysis and  algorithm development of many computational methods, e.g.,     $p$/$hp$ finite elements  or spectral/spectral-element methods (see, e.g., \cite{Funa92,Schwab1998Book,MR1470226,CHQZ06,HGG07,ShenTangWang2011} and references therein).
Typically, the  documented approximation results   take  the form
\begin{equation}\label{existestm}
\| Q_N u-u \|_{\mathcal S_{\rm l}}\le cN^{-\sigma} |u|_{\mathcal B_{\rm r}},\quad \sigma\ge 0,
\end{equation}
where $Q_N$ is an orthogonal projection (or interpolation operator) upon the set of all polynomials of degree at most $N,$ and $c$ is a positive constant independent of $N$ and $u$. In \eqref{existestm},    ${\mathcal S}_{\rm l}$ is  a certain Sobolev space,
${\mathcal B}_{\rm r}$ is a related  Sobolev  or Besov space, and $\sigma$ depends on the regularity exponentials of both ${\mathcal B}_{\rm r}$ and ${\mathcal S}_{\rm l}$. In practice,  one would expect  {\em {\rm \bf (a)} the space ${\mathcal B}_{\rm r}$ should contain the classes of functions as broad as possible; {\rm and  {\bf  (b)}}  the space ${\mathcal B}_{\rm r}$ can best characterise their regularity   leading to  optimal order
of  convergence.}  In general,  the  space ${\mathcal B}_{\rm r}$ is of the following types.    
\begin{itemize}
\item[(i)]  ${\mathcal B}_{\rm r}$ is the standard weighted Sobolev space $H^m_\omega(\Omega)$ with integer $m\ge 0$ and certain
weight function $\omega(x)$ on $\Omega=(-1,1)$ (see, e.g.,  \cite{MR1470226,CHQZ06,HGG07}).  However,  it  could not  lead to optimal order  for functions with endpoint singularities  (see, e.g., \cite{MR1470226,Guo.W04}) or with interior singularities, e.g., $|x|$ (see \cite{Trefethen2008SIREV}).

\vskip 4pt
\item[(ii)]  ${\mathcal B}_{\rm r}$ is the  non-uniformly Jacobi-weighted Sobolev space (see, e.g., \cite{Funa92,Bab.G00,Bab.G01,Guo.W04,Guo.SW06,ShenTangWang2011}).  For example,  ${\mathcal B}_{\rm r} =H^{m,\beta}(\Omega)$ with integer $m\ge 0$ and $\beta>-1,$  is defined as  a closure of $C^\infty$-functions endowed with the weighted norm
\begin{equation}\label{weightednorm}
\|u\|_{H^{m,\beta}(\Omega)}=\bigg\{\sum_{k=0}^m \int_{-1}^1 |u^{(k)}(x)|^2(1-x^2)^{\beta+k} dx  \bigg\}^{1/2}.
\end{equation}
Compared with the standard Sobolev space in (i), such spaces can better  describe the  endpoint singularities, but
 still produce suboptimal estimates for $(1+x)^\alpha$-type  singular functions with non-integer $\alpha>0$   (cf. \cite[P. 474]{Castillo2002MC}).
Indeed, for the Chebyshev approximation, we  
find that  $u=(1+x)^\alpha \in  {H^{m,-1/2}(\Omega)}$ with integer  $m<2\alpha+1/2,$   and   
\begin{equation}\label{Jacoweighted00}
   \|\pi_N^C u- u\|_{L^2_\omega(\Omega)}\le c N^{-m}|u|_{H^{m,-1/2}(\Omega)},  
   \end{equation}
    where $\pi_N^C u$ is the $L^2_\omega$-orthogonal projection of $u$ (with $\omega=(1-x^2)^{-1/2}$). However, the expected optimal order is  $O(N^{-2\alpha-1/2}),$ so the loss of an order of the fractional part of $2\alpha+1/2$ or one order (when $2\alpha=k+1/2$ with $k\in {\mathbb N_0}$),  is inevitable under this framework. This is due to the space $H^{m,\beta}(\Omega)$ is only defined for integer $m\ge 0.$

\vskip 4pt
\item[(iii)] In a series of works \cite{Bab.G00,Bab.G01,Babuska2002MMMAS},   Babu\v{s}ka and Guo  introduced the Jacobi-weighted Besov space
defined by space interpolation based on  the so-called K-method. 
One commonly used  Besov space for $(1+x)^\alpha$-type corner singularities  is  ${\mathcal B}^{s,\beta}_{2,2}(\Omega)=(H^{l,\beta}(\Omega),H^{m,\beta}(\Omega))_{\theta, 2}$ with integers $l<m$ and  $s=(1-\theta) l+\theta m, \theta\in (0,1),$  equipped with the norm
\begin{equation}\label{weightednorm22}
{~~}\qquad\quad \|u\|_{{\mathcal B}^{s,\beta}_{2,2}(\Omega)}=\bigg(\int_{0}^\infty t^{-2\theta} |K(t,u)|^2 \frac{dt} t  \bigg)^{ 1/2}\!,\quad
K(t,u)=\inf_{u=v+w} \big(\|v\|_{H^{l,\beta}(\Omega)}+t\|w\|_{H^{m,\beta}(\Omega)} \big).
\end{equation}
However, to deal with $(1+x)^\alpha \log^\nu (1+x)$-type corner singularities,  Babu\v{s}ka and Guo had to further modify  the K-method by incorporating  a log-factor into  the norm.


\end{itemize}

The aforementioned framework    might lead to  suboptimal estimates for  functions with interior singularities.
For example, we consider  $u(x)=|x|$  and note that   $u'(x)=2H(x)-1$  and $ u''(x)=2\delta(x)$ (where $H,\delta$ are respectively the Heaviside function and the Dirac delta function).   Since $u''\not \in L^2(\Omega)$,
the Chebyshev approximation of $|x|$ has a convergence:
   \begin{equation}\label{Jacoweighted}
   \|\pi_N^C u- u\|_{L^2_\omega(\Omega)}\le c N^{-1}|u|_{H^{1,-1/2}(\Omega)},
   \end{equation}
but   the expected  optimal order is  $O(N^{-3/2})$ (cf. \cite{Trefethen2008SIREV,Trefethen2013Book}). In fact,
as shown in  \cite[Thms 4.2-4.3]{Trefethen2008SIREV} and \cite[Thms 7.1-7.2]{Trefethen2013Book} (also see Lemma \ref{trenstha} below),  one should choose ${\mathcal B}_{\rm r}\subset {\rm BV}(\Omega)$ (the space of functions of bounded variation)
to achieve optimality (see Section \ref{sect:existingest}, and  refer to  \cite{Trefethen2008SIREV,Xiang2010NM,Trefethen2013Book,Majidian2017ANM}  for more details).
Unfortunately, the Sobolev spaces therein  were defined through integer-order derivatives, so they could not best
characterise  the regularity of e.g., $u(x)=|x|^{\alpha}$ with non-integer $\alpha>0.$ In other words,  the order of convergence can only be  suboptimal.

In this paper, we intend  to introduce a new framework of fractional Sobolev-type spaces that can  meet the two  requirements {\bf (a)-(b)} and
overcome the deficiencies mentioned above.
We focus on the Chebyshev approximation but the  analysis techniques
are extendable to general Jacobi approximations.    Here, we  put the emphasises  on
estimating  the decay rate of  expansion coefficients for the reason that  the errors of spectral expansions in various norms, and the related interpolation and quadratures
  can be estimated directly from the sums of the coefficients (cf. \cite{Trefethen2008SIREV,Majidian2017ANM}).
%
The essential ideas and main contributions of this study  are summarised as follows.
 \begin{itemize}
\item[(i)]   We derive the exact representation of the Chebyshev expansion coefficients  (see Theorem \ref{IdentityForUn}) by using  the fractional calculus properties of  GGF-Fs and  fractional integration by parts (under the weakest possible conditions).
  This allows us to naturally define the fractional Sobolev spaces to characterise the regularity
 of a large class of singular functions, leading to optimal order of convergence.
%

\vskip 4pt
   \item [(ii)] When the fractional regularity exponential  $s\to 1,$  our results  improve  the existing bounds in usual Sobolev spaces (see, e.g., \cite{Trefethen2008SIREV,Xiang2010NM,Trefethen2013Book,Majidian2017ANM}).
In this sense,   the fractional Sobolev-type space with regularity exponential $m+s$ ($s\in (0,1)$ and integer $m\ge 0$)  can be regarded as  an intermediate space inbetween the spaces
with regularity exponentials $m+1$ and $m$ in  \cite{Trefethen2013Book}.

    \item[(iii)]  We provide some useful analytical formulas on fractional calculus of GGF-Fs, and  the Chebyshev expansions of some specific singular functions.    Some of them  are new or difficult to be derived by other means  (cf. \cite{Gui1986NM,Boyd1989AMC,Wang2014arXiv}).  They can  also be useful in the design of spectral algorithms.
 \end{itemize}
\vskip 4pt

 The paper is organised as follows.  In Sections \ref{sect:gegenbaure}-\ref{sect:fintderG}, we introduce the GGF-Fs, and present their important  properties, including the uniform bounds and RL fractional integral/derivative formulas.  We derive the main results in Section \ref{mainsect:ms}, and    improve the existing estimates in Sobolev spaces with integer-order derivatives in Section \ref{sect:existingest}.   We discuss in Section \ref{Sect6Analysis} the extension of the main results to the analysis of interpolation, quadrature and endpoint singularities.

\section{Generalised Gegenbauer functions of fractional degree}\label{sect:gegenbaure}
In this section,  we collect some relevant properties of the hypergeometric functions and Gegenbauer polynomials, upon which
we define  the GGF-Fs and derive their relevant properties. These pave the way for the  forthcoming  error  analysis.

\subsection{Hypergeometric functions and Gegenbauer polynomials}
Let $\mathbb Z$ and $\mathbb R$ be the sets of all integers and real numbers, respectively,  and denote
\begin{equation}\label{mathNR}
\mathbb N=\big\{k\in {\mathbb Z}: k\ge 1\big\},\;\;   {\mathbb N}_0:=\{0\}\cup {\mathbb N}, \;\; {\mathbb R}^+:= \big\{a\in {\mathbb R}: a> 0\big\}, \;\; {\mathbb R}^+_0:=\{0\}\cup {\mathbb R}^+.
\end{equation}
For $a\in {\mathbb R},$  the rising factorial in the Pochhammer symbol is defined by
\begin{equation}\label{anotation}
(a)_0=1; \;\;\;\;  
(a)_j=a(a+1)\cdots (a+j-1), \;\;\;\forall\, j\in {\mathbb N}. 
\end{equation}

 The  hypergeometric function is a power series, defined by  (cf. \cite{Andrews1999Book})
\begin{equation}\label{hyperboscs}
{}_2F_1(a,b;c; z)=\sum_{j=0}^\infty \frac{(a)_j(b)_j}{(c)_j}\frac{z^j}{j!} =
1+
\sum_{j=1}^\infty \frac{a(a+1)\cdots (a+j-1)}{1\cdot 2\cdots j} \frac{b(b+1)\cdots (b+j-1)}
{c (c+1)\cdots (c+j-1)}
 {z^j},
\end{equation}
where  $a,b,c\in {\mathbb R}$ and $-c\not \in {\mathbb N}_0.$
The series  converges absolutely for all $|z|<1$, and apparently, we have
 \begin{equation}\label{obvsfact}
 {}_2F_1(a,b;c; 0)=1,\quad {}_2F_1(a,b;c; z)={}_2F_1(b,a;c; z).
 \end{equation}
If $a=-n$ with $n\in {\mathbb N}_0,$  then $(a)_j=0, j\ge n+1,$ so ${}_2F_1(-n,b;c; x)$ reduces to a polynomial of degree not more than  $n.$

 The following properties
  can be found in \cite[Ch.\!\! 2]{Andrews1999Book}, if not stated otherwise.
  \begin{itemize}
  \item If $c-a-b>0,$ the series \eqref{hyperboscs} converges absolutely at $z=\pm 1,$ and
    \begin{equation}\label{GaussAnd}
{}_2F_1(a,b;c;1)=\frac{\Gamma(c)\Gamma(c-a-b)}{\Gamma(c-a)\Gamma(c-b)}. 
\end{equation}
Here,  the Gamma function with negative non-integer arguments should be  understood by the Euler's reflection formula: 
\begin{equation}\label{nonitA}
\Gamma(1-a)\Gamma(a)=\frac{\pi} {\sin (\pi a)},\quad a\not \in \mathbb Z.
\end{equation}
Note that   $\Gamma(-a)=\infty,$  if $a\in {\mathbb N}.$
\item If $-1<c-a-b\le 0,$ the series \eqref{hyperboscs} converges conditionally at $z=-1,$ but diverges at $z=1;$ while for
$ c-(a+b)\le -1$, it diverges at $z=\pm 1.$ In fact, it has the following singular behaviours at $z=1:$
\begin{equation}\label{Nist15421}
\lim_{z\to 1^-}\frac{ {}_2F_1(a,b;c;z)}{-\ln (1-z)}=\frac{\Gamma(c)}{\Gamma(a)\Gamma(b)},\quad {\rm if}\;\; c=a+b,
\end{equation}
and
\begin{equation}\label{Nist15421cc}
\lim_{z\to 1^-}\frac{ {}_2F_1(a,b;c;z)}{(1-z)^{c-a-b}}=\frac{\Gamma(c)\Gamma(a+b-c)}{\Gamma(a)\Gamma(b)},\quad {\rm if}\;\; c<a+b.
\end{equation}
\end{itemize}
Recall  the transform identity: for $a,b,c \in {\mathbb R}$ and $-c\not \in {\mathbb N}_0,$
\begin{align}\label{Euler}
{}_2F_1(a,b;c;z) =(1-z)^{c-a-b}{}_2F_1(c-a,c-b;c;z),\quad |z|<1.
\end{align}
The hypergeometric function satisfies the differential equation (cf. \cite[P. 98]{Andrews1999Book}):
\begin{equation}\label{SLPHF}
\begin{split}
\big\{z^c(1-z)^{a+b-c+1}y'(z) \big\}'=abz^{c-1}
(1-z)^{a+b-c}y(z),\quad y(z)={}_2F_1(a,b;c;z).
\end{split}
\end{equation}
We shall use the value  at $z=1/2$ (cf. \cite[(15.4.28)]{Olver2010Book}): 
\begin{align}\label{Fatzero}
{}_2F_1\Big(a,b;\frac{a+b+1}2;\frac 1 2\Big) =
\frac{\sqrt{\pi}\,\Gamma((a+b+1)/2)}{\Gamma((a+1)/2)\Gamma((b+1)/2)}.
\end{align}


Many functions are  associated with the hypergeometric function.
For example,  the Jacobi polynomial of degree $n\in {\mathbb N}_0$  with   $\alpha,\beta>-1$  (cf.   Szeg\"o \cite{szeg75}) is  defined by
\begin{equation}\label{Jacobidefn00}
\begin{split}
P_n^{(\alpha,\beta)}(x)&=\frac{(\alpha+1)_n}{n!}{}_2F_1\Big(\!\!-n, n+\alpha+\beta+1;\alpha+1;\frac{1-x} 2\Big)\\
&=(-1)^n \frac{(\beta+1)_n}{n!}{}_2F_1\Big(\!\!-n, n+\alpha+\beta+1;\beta+1;\frac{1+x} 2\Big),\;\;\;  x\in (-1,1),
\end{split}
\end{equation}
 which satisfies
\begin{equation}\label{Gjbinp}
P_n^{(\alpha,\beta)}(-x)=(-1)^n P_n^{(\beta,\alpha)}(x),\quad  P_n^{(\alpha,\beta)}(1)= \frac{(\alpha+1)_n}{n!}.
\end{equation}
For  $\alpha,\beta>-1,$   the   Jacobi polynomials  are orthogonal with respect to the Jacobi weight function:  $\omega^{(\alpha,\beta)}(x) = (1-x)^{\alpha}(1+x)^{\beta},$ namely,
\begin{equation}\label{jcbiorth}
\int_{-1}^1 {P}_n^{(\alpha,\beta)}(x) {P}_{n'}^{(\alpha,\beta)}(x) \omega^{(\alpha,\beta)}(x) \, dx= \gamma _n^{(\alpha,\beta)} \delta_{nn'},
\end{equation}
where $\delta_{nn'}$ is the Kronecker Delta symbol, and
\begin{equation}\label{co-gamma}
\gamma _n^{(\alpha,\beta)} =\frac{2^{\alpha+\beta+1}\Gamma(n+\alpha+1)\Gamma(n+\beta+1)}{(2n+\alpha+\beta+1) n!\,\Gamma(n+\alpha+\beta+1)}.
\end{equation}
\begin{rem}\label{identityCase} {\em The definition \eqref{Jacobidefn00} is valid for all  $\alpha,\beta\in {\mathbb R}$.
In fact,  many properties  of the classical Jacobi polynomials  {\rm(}e.g., \eqref{Gjbinp}{\rm)} still hold, but the orthogonality is lacking  in general {\rm(}cf.   Szeg\"o \cite[P. 63-67]{szeg75}{\rm).}}
\end{rem}

Throughout this paper,  the  Gegenbauer polynomial with  $\lambda>-1/2$ is defined by
\begin{equation}\label{Jacobidefn0}
\begin{split}
G_n^{(\lambda)}(x)&=\frac{P_n^{(\lambda-1/2,\lambda-1/2)}(x)} {P_n^{(\lambda-1/2,\lambda-1/2)}(1)}
={}_2F_1\Big(\!-n, n+2\lambda;\lambda+\frac 1 2;\frac{1-x} 2\Big)\\
&=(-1)^n\,{}_2F_1\Big(\!-n, n+2\lambda;\lambda+\frac 1 2;\frac{1+x} 2\Big), \;\; x\in (-1,1),
\end{split}
\end{equation}
which has  a normalization different from that in Szeg\"o \cite{szeg75}.  If $\lambda=0,$ it reduces to the Chebyshev polynomial   
\begin{equation}\label{Chebydefn0}
T_n(x)=G_n^{(0)}(x)={}_2F_1\Big(\!\!-n, n; \frac 1 2; \frac{1-x} 2\Big)=\cos (n\, {\rm arccos}(x)).
\end{equation}
Note that under the above  normalization,   we derive  from \eqref{jcbiorth}-\eqref{co-gamma}  the orthogonality:
\begin{equation}\label{genorth}
    \int_{-1}^1 {G}_n^{(\lambda)}(x) {G}_m^{(\lambda)}(x)\,  \omega_{\lambda}(x) \, dx= \gamma _n^{(\lambda)} \delta_{nm};\;\;\; \gamma _n^{(\lambda)} =\frac{2^{2\lambda-1}\Gamma^2(\lambda+1/2)\, n!} {(n+\lambda) \Gamma(n+2\lambda)},
\end{equation}
where $\omega_\lambda(x) = (1-x^2)^{\lambda-1/2}.$
In the analysis, we shall use the derivative relation derived from the generalised Rodrigues' formula (see \cite[(4.10.1)]{szeg75} with
 $\alpha=\beta=\lambda-1/2>-1$ and $m=1$):
	\begin{equation}\label{RodriguesF}
	\begin{split}
	\omega_{\lambda}&(x) G_{n}^{(\lambda)}(x)=\, -\frac{1} {2\lambda+1}\, \frac{d}{dx} \big\{ \omega_{\lambda+1}(x) G_{n-1}^{(\lambda+1)}(x) \big\} ,  \;\;\; n\ge 1.
	\end{split}
	\end{equation}

\subsection{Generalised Gegenbauer functions of fractional degree} As an indispensable tool for the error analysis,
we introduce   the GGF-Fs  by allowing the degree  $n$  of the Gegenbauer polynomials in \eqref{Jacobidefn0}  to be real.
\begin{defn}\label{gjfdefinition}  {\em For real $\lambda>-1/2$ and real $\nu\ge 0,$ the right GGF-F of degree $\nu$ is defined by
\begin{equation}\label{rgjfdef}
{}^{r\!}G_\nu^{(\lambda)}(x)=\, {}_2F_1\Big(\!\!-\nu, \nu+2\lambda;\lambda+\frac 1 2;\frac{1-x} 2\Big)=1+\sum_{j=1}^\infty \frac{(-\nu)_j(\nu+2\lambda)_j}{
j!\; (\lambda+1/2)_j }\Big(\frac{1-x}{2}\Big)^j,
\end{equation}
for $x\in (1,1);$ while the left GGF-F of degree $\nu$ is defined by
\begin{equation}\label{lgjfdef}
\begin{split}
{}^{l}G_\nu^{(\lambda)}(x)= &(-1)^{[\nu]}  \, {}_2F_1\Big(\!\!-\nu,\nu+2\lambda;\lambda+\frac 1 2;\frac{1+x}{2}\Big)= (-1)^{[\nu]}  \,\bigg\{1+
\sum_{j=1}^\infty \frac{(-\nu)_j(\nu+2\lambda)_j}{
 j!\; (\lambda+1/2)_j}\Big(\frac{1+x}{2}\Big)^j\bigg\}, 
\end{split}
\end{equation}
where $[\nu]$ is the largest integer $\le \nu.$ } \qed
\end{defn}
\begin{rem}\label{olddefinition} {\em
 For  $\lambda=1/2,$  the right GGF-F  turns to be  the Legendre function {\rm(}cf. \!\cite{Andrews1999Book}{\rm)}: $P_\nu(x)={}^{r\!}G_\nu^{(1/2)}(x).$
 In  Handbook  \cite[(15.9.15)]{Olver2010Book},  ${}^{r\!}G_\nu^{(\lambda)}(x)$ {\rm(}with a different normalisation{\rm)} is defined  as
the  Gegenbauer function. However, there is nearly no discussion on its properties.}
\end{rem}

Observe from  \eqref{Jacobidefn0} and Definition \ref{gjfdefinition} that the GGF-Fs reduce to the classical  Gegenbauer polynomials when $\nu\in {\mathbb N}_0,$  but they are non-polynomials when $\nu$ is not an integer.
\begin{prop}\label{obvprop00} The GGF-Fs defined in Definition \ref{gjfdefinition} satisfy
\begin{subequations}
\begin{equation}\label{obsvers0}
  {}^{r\!}G_n^{(\lambda)}(x)=  {}^lG_n^{(\lambda)}(x)=G_n^{(\lambda)}(x), \quad n\in {\mathbb N}_0;
\end{equation}
\begin{equation}\label{obsvers}
{}^{r\!}G_\nu^{(\lambda)}(-x)=(-1)^{[\nu]} \,
{}^{l}G_\nu^{(\lambda)}(x), \quad {}^{r\!}G_\nu^{(\lambda)}(1)=1,\quad
{}^{l}G_\nu^{(\lambda)}(-1)=(-1)^{[\nu]}.
\end{equation}
\end{subequations}
\end{prop}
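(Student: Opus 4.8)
The plan is to verify each of the three identities in \eqref{obsvers0} and \eqref{obsvers} directly from the power-series definitions \eqref{rgjfdef}--\eqref{lgjfdef}, using only the elementary facts about the hypergeometric function already recorded in the excerpt, namely \eqref{obvsfact} and the remark immediately after \eqref{hyperboscs} that ${}_2F_1(-n,b;c;z)$ terminates when $n\in\NN_0$.

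First I would prove \eqref{obsvers0}. When $\nu=n\in\NN_0$, the Pochhammer factor $(-n)_j$ vanishes for $j\ge n+1$, so the series in \eqref{rgjfdef} truncates to a polynomial of degree $n$ and coincides termwise with the first hypergeometric representation of $G_n^{(\lambda)}$ in \eqref{Jacobidefn0}; hence ${}^{r\!}G_n^{(\lambda)}(x)=G_n^{(\lambda)}(x)$. For the left function, note $[\nu]=n$ so $(-1)^{[\nu]}=(-1)^n$, and \eqref{lgjfdef} reduces to $(-1)^n\,{}_2F_1(-n,n+2\lambda;\lambda+\tfrac12;\tfrac{1+x}2)$, which is exactly the second representation of $G_n^{(\lambda)}$ in \eqref{Jacobidefn0}. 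This gives \eqref{obsvers0}.

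Next, the symmetry relation ${}^{r\!}G_\nu^{(\lambda)}(-x)=(-1)^{[\nu]}\,{}^{l}G_\nu^{(\lambda)}(x)$: replacing $x$ by $-x$ in \eqref{rgjfdef} turns the argument $\tfrac{1-x}2$ into $\tfrac{1+x}2$, so ${}^{r\!}G_\nu^{(\lambda)}(-x)={}_2F_1(-\nu,\nu+2\lambda;\lambda+\tfrac12;\tfrac{1+x}2)$, and comparing with \eqref{lgjfdef} this equals $(-1)^{[\nu]}\,{}^{l}G_\nu^{(\lambda)}(x)$ since the sign $(-1)^{[\nu]}$ is its own inverse. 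For the endpoint values, ${}^{r\!}G_\nu^{(\lambda)}(1)$ has argument $\tfrac{1-1}2=0$, so by \eqref{obvsfact} (that is, ${}_2F_1(a,b;c;0)=1$) it equals $1$; and ${}^{l}G_\nu^{(\lambda)}(-1)$ has argument $\tfrac{1+(-1)}2=0$, giving $(-1)^{[\nu]}\cdot 1=(-1)^{[\nu]}$. One technical caveat to address: the series for ${}^{r\!}G_\nu^{(\lambda)}$ at $x=1$ and for ${}^{l}G_\nu^{(\lambda)}$ at $x=-1$ is evaluated at the point where the power-series variable is $0$, so there is no convergence issue at all — every term beyond $j=0$ vanishes identically, so these endpoint evaluations are unconditional.

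I do not anticipate a genuine obstacle here; the proposition is essentially a bookkeeping exercise tracking how the affine substitutions $x\mapsto -x$ and $x\mapsto\pm1$ act on the argument $\tfrac{1\mp x}{2}$, together with the definition $(-1)^{[\nu]}$ built into \eqref{lgjfdef}. The only point requiring a word of care is making sure that when $\nu\in\NN_0$ the floor $[\nu]=\nu$ so that the parity factors in Definition \ref{gjfdefinition} match those in the polynomial identities \eqref{Jacobidefn0}; this is immediate. If anything the ``hard part'' is purely expository: stating the argument compactly enough that it does not read as three separate trivialities.
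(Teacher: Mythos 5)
Your verification is correct and matches the paper's reasoning: the paper states Proposition \ref{obvprop00} as an immediate observation from \eqref{Jacobidefn0} and Definition \ref{gjfdefinition}, which is exactly the termwise comparison and argument-substitution bookkeeping you carry out (including the harmless evaluation at the points where the hypergeometric argument is $0$). No gap; nothing further is needed.
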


 The special  GGF-Fs $\big\{{}^{r\!}G_{n-\alpha}^{(\alpha+1/2)}(x)\big\}$ and $\big\{{}^{l}G_{n-\alpha}^{(\alpha+1/2)}(x)\big\}$
 are closely related to   the Jacobi polynomials with the  parameters maybe $\le -1$ (cf. Remark \ref{identityCase}).
\begin{prop}\label{nonvemberA}   For $\alpha>-1 $ and $n\ge   \alpha $ with $n\in {\mathbb N}_0,$ we have
\begin{equation}\label{lmbda0}
  \frac{P_n^{(\alpha,-\alpha)}(x)}{P_n^{(\alpha,-\alpha)}(1)} =\Big(\frac {1+x}2\Big)^{\alpha} \,
  {}^{r\!}G_{n-\alpha}^{(\alpha+1/2)}(x);\quad  \frac{P_n^{(-\alpha,\alpha)}(x)}{P_n^{(\alpha,-\alpha)}(1)} =(-1)^{[\alpha]}\Big(\frac {1-x}2\Big)^{\alpha} \, {}^{l}G_{n-\alpha}^{(\alpha+1/2)}(x)\,.
\end{equation}
\end{prop}
\begin{proof}
Taking $a=-n+\alpha, b=n+\alpha+1, c=\alpha+1$ and $z=(1-x)/2$ in \eqref{Euler},  we obtain from \eqref{obvsfact} that
 \begin{equation}\label{defeqnA01}
 \begin{split}
{}_2F_1\Big(\!\!-n+\alpha, & \,n+\alpha+1;\alpha+1; \frac{1-x}{2}\Big)=\Big(\frac {1+x} 2\Big)^{-\alpha}\!
{}_2F_1\Big(\!\!-n,n+1;\alpha+1;\frac{1-x}{2}\Big).
   \end{split}
\end{equation}
By \eqref{Jacobidefn00}-\eqref{Gjbinp},
$$
{}_2F_1\Big(\!\!-n,n+1;\alpha+1;\frac{1-x}{2}\Big)=   \frac{P_n^{(\alpha,-\alpha)}(x)}{P_n^{(\alpha,-\alpha)}(1)}\,,
$$
and by \eqref{rgjfdef} (taking $\nu=n-\alpha$),  the hypergeometric function in the left-hand side of \eqref{defeqnA01} equals to  $\, {}^{r\!}G_{n-\alpha}^{(\alpha+1/2)}(x).$ Thus, we derive the first identity in \eqref{lmbda0}.

Thanks to \eqref{Gjbinp} and   \eqref{obsvers}, the second identity in  \eqref{lmbda0} follows from the first one immediately.
\end{proof}
\begin{rem}\label{spJacibGlam} {\em   If  $-1<\alpha <1, $ we  rewrite \eqref{lmbda0} as
\begin{equation}\label{basisfunc}
{}^{r\!}G_{n-\alpha}^{(\alpha+1/2)}(x)=d_{n,\alpha} (1+x)^{-\alpha} P_n^{(\alpha,-\alpha)}(x);\;\;\;
{}^{l}G_{n-\alpha}^{(\alpha+1/2)}(x)=(-1)^{[\alpha]}d_{n,\alpha} (1-x)^{-\alpha} P_n^{(-\alpha,\alpha)}(x),
\end{equation}
where $d_{n,\alpha}=2^{\alpha}/P_n^{(\alpha,-\alpha)}(1).$ From  \eqref{jcbiorth}-\eqref{co-gamma}, we immediately obtain  the orthogonality:
\begin{equation}\label{orhtognalityA}
\begin{split}
\int_{-1}^{1} &  {}^{r\!}G_{n-\alpha}^{(\alpha+1/2)}(x)\,  {}^{r\!}G_{m-\alpha}^{(\alpha+1/2)}(x)\,  (1-x^2)^{\alpha}\, dx \\
 &=d_{n,\alpha} d_{m,\alpha} \int_{-1}^1 P_n^{(\alpha,-\alpha)}(x) P_m^{(\alpha,-\alpha)}(x)
(1-x)^\alpha (1+x)^{-\alpha}\, dx =d_{n,\alpha}^2 \gamma_n^{(\alpha,-\alpha)} \delta_{mm},
\end{split}
\end{equation}
and likewise for  $\{{}^{l}G_{n-\alpha}^{(\alpha+1/2)}(x)\}.$  It is noteworthy  that  $\{(1+x)^{-\alpha} P_n^{(\alpha,-\alpha)}\}$ are defined as the Jacobi polyfractonomials in  \cite{zayernouri2013fractional} and  special  generalised Jacobi functions in  \cite{Guo.SW09,Chen.SW2016}, which   serve as effective {\rm(}singular{\rm)} basis functions  in  accurate solutions of  fractional differential  equations {\rm(}cf. \cite{zayernouri2013fractional,Chen.SW2016}{\rm).}  It is seen from  \eqref{basisfunc} that they turn out to be special GGF-Fs. }
\end{rem}

It is important to point out that the GGF-Fs may be singular at $x=\pm 1,$ and they  behave differently in different ranges of $\lambda$.
\begin{prop}\label{nonvember}  Let $\nu\in {\mathbb R}^+_0.$   
\begin{itemize}
\item[(i)] If $-1/2<\lambda<1/2$, then
\begin{equation}\label{defBS00}
 {}^{r\!}G_\nu^{(\lambda)}(-1) =
\frac{ \cos((\nu+\lambda)\pi)}{\cos(\lambda\pi)}= (-1)^{[\nu]}\,  {}^{l}G_\nu^{(\lambda)}(1)\,.
 \end{equation}
\item[(ii)] If $\lambda=1/2$ and $\nu\not \in {\mathbb N}_0,$ then
\begin{equation}\label{defBS0A}
\lim_{x\to -1^+}\frac{{}^{r\!}G_\nu^{(\lambda)}(x)}{\ln(1+x)}= \frac{\sin (\nu\pi)} {\pi} =  \lim_{x\to 1^-}\frac{(-1)^{[ \nu]}\, {}^{l}G_\nu^{(\lambda)}(x)}{\ln(1-x)}\,.
\end{equation}
\item[(iii)] If $\lambda>1/2$ and $\nu\not \in {\mathbb N}_0,$ then
\begin{equation}\label{defBS0B}
 \begin{split}
\lim_{x\to -1^+} \Big(\frac{1+x} 2\Big)^{\lambda-1/2}\,  {}^{r\!}G_\nu^{(\lambda)}(x)&=
 -\frac{\sin(\nu\pi)}\pi
\frac{\Gamma(\lambda-1/2)\Gamma(\lambda+1/2)\Gamma(\nu+1)}{\Gamma(\nu+2\lambda)}\\
&= (-1)^{[\nu]} \!\lim_{x\to 1^-} \Big(\frac{1-x} 2\Big)^{\lambda-1/2}\,  {}^{l}G_\nu^{(\lambda)}(x).
   \end{split}
\end{equation}
\end{itemize}
\end{prop}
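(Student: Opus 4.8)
The plan is to read off the boundary behaviour of the GGF-Fs directly from the classical connection formulas for ${}_2F_1$ at $z=1$, specialised to the parameter combination $a=-\nu$, $b=\nu+2\lambda$, $c=\lambda+1/2$. Note first that $c-a-b=(\lambda+1/2)-(-\nu)-(\nu+2\lambda)=1/2-\lambda$, so the three regimes $-1/2<\lambda<1/2$, $\lambda=1/2$, $\lambda>1/2$ correspond exactly to $c-a-b>0$, $c-a-b=0$, $c-a-b<0$, which is why the statement splits into these three cases. Since by \eqref{rgjfdef} we have ${}^{r\!}G_\nu^{(\lambda)}(x)={}_2F_1(-\nu,\nu+2\lambda;\lambda+\tfrac12;\tfrac{1-x}2)$, the limits as $x\to-1^+$ are precisely the limits as $z\to1^-$ of the hypergeometric function; the corresponding statements for ${}^{l}G_\nu^{(\lambda)}$ then follow immediately from the reflection identity ${}^{r\!}G_\nu^{(\lambda)}(-x)=(-1)^{[\nu]}\,{}^{l}G_\nu^{(\lambda)}(x)$ of Proposition \ref{obvprop00}, which already accounts for the $(-1)^{[\nu]}$ factors appearing on the right-hand sides.

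For part (i), I would apply the Gauss summation formula \eqref{GaussAnd}: since $c-a-b=1/2-\lambda>0$ the series converges at $z=1$ and
\[
{}^{r\!}G_\nu^{(\lambda)}(-1)=\frac{\Gamma(\lambda+1/2)\,\Gamma(1/2-\lambda)}{\Gamma(\lambda+1/2+\nu)\,\Gamma(1/2-\lambda-\nu)}.
\]
Then I would simplify using Euler's reflection formula \eqref{nonitA} twice — once on $\Gamma(\lambda+1/2)\Gamma(1/2-\lambda)=\pi/\sin(\pi(\lambda+1/2))=\pi/\cos(\pi\lambda)$, and once on $\Gamma(\lambda+1/2+\nu)\Gamma(1/2-\lambda-\nu)=\pi/\sin(\pi(\lambda+1/2+\nu))=\pi/\cos(\pi(\lambda+\nu))$ — which gives the ratio $\cos((\nu+\lambda)\pi)/\cos(\lambda\pi)$. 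The hypothesis $\nu+\lambda-1/2\notin\mathbb N_0$ guarantees $\cos((\nu+\lambda)\pi)\ne 0$ is not forced and, more importantly, that the Gamma functions with argument $1/2-\lambda-\nu$ are not at a pole-cancellation ambiguity, so the reflection-formula manipulation is legitimate.

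For part (ii), with $\lambda=1/2$ we have $c=a+b$ exactly (indeed $c=1$, $a=-\nu$, $b=\nu+1$), so the logarithmic limit \eqref{Nist15421} applies directly:
\[
\lim_{z\to1^-}\frac{{}_2F_1(-\nu,\nu+1;1;z)}{-\ln(1-z)}=\frac{\Gamma(1)}{\Gamma(-\nu)\Gamma(\nu+1)}.
\]
Writing $z=\tfrac{1-x}2$ so that $1-z=\tfrac{1+x}2$, the factor $\ln(1+x)$ versus $-\ln(1-z)$ differs by $\ln 2$, which vanishes in the limit ratio, and $1/(\Gamma(-\nu)\Gamma(\nu+1))=\sin(\nu\pi)/\pi$ again by \eqref{nonitA} (using $\Gamma(-\nu)\Gamma(1+\nu)=-\Gamma(-\nu)\Gamma(\nu)\cdot\nu\cdot(-1)$... more directly $\Gamma(1-(-\nu))\Gamma(-\nu)\cdot$ adjusted; in any case $1/(\Gamma(-\nu)\Gamma(1+\nu))=\sin(-\nu\pi)/\pi\cdot(-1)=\sin(\nu\pi)/\pi$), giving the claimed value. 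For part (iii), with $\lambda>1/2$ we have $c<a+b$, so \eqref{Nist15421cc} gives
\[
\lim_{z\to1^-}\frac{{}_2F_1(-\nu,\nu+2\lambda;\lambda+1/2;z)}{(1-z)^{1/2-\lambda}}=\frac{\Gamma(\lambda+1/2)\,\Gamma(\lambda-1/2)}{\Gamma(-\nu)\,\Gamma(\nu+2\lambda)};
\]
multiplying through by $(1-z)^{\lambda-1/2}=(\tfrac{1+x}2)^{\lambda-1/2}$, replacing $1/\Gamma(-\nu)$ by $-\Gamma(\nu+1)\sin(\nu\pi)/\pi$ (from $\Gamma(-\nu)\Gamma(1+\nu)=-\pi/\sin(\nu\pi)$), and collecting constants yields the stated formula. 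The main obstacle — really the only delicate point — is keeping track of the sign and the exact Gamma-to-trigonometric conversions so the final constants come out matching the statement verbatim, and checking in each case that the non-integrality hypotheses on $\nu$ (and on $\nu+\lambda-1/2$ in (i)) rule out the degenerate cases where a Gamma factor blows up or the connection formula fails; the limits for ${}^{l}G$ require no extra work beyond quoting Proposition \ref{obvprop00}.
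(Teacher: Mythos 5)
Your proposal is correct and follows essentially the paper's own argument: reduce everything to ${}^{r\!}G_\nu^{(\lambda)}$ via the reflection identity \eqref{obsvers}, then read off the behaviour of ${}_2F_1(-\nu,\nu+2\lambda;\lambda+\tfrac12;z)$ as $z\to 1^-$, using Gauss's formula \eqref{GaussAnd} plus Euler's reflection \eqref{nonitA} for (i) and the logarithmic limit \eqref{Nist15421} for (ii). The only real deviation is in (iii): you invoke the quoted power-law limit \eqref{Nist15421cc} directly, whereas the paper first applies Euler's transformation \eqref{Euler} to pull out the factor $((1+x)/2)^{1/2-\lambda}$ and then evaluates the transformed series at $z=1$ by \eqref{GaussAnd}; since \eqref{Nist15421cc} is itself a consequence of that manipulation, the two routes are interchangeable and your constants come out right.

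One point to fix before a final write-up: in (ii) you make two sign slips that cancel. First, $-\ln(1-z)=-\ln\frac{1+x}{2}=\ln 2-\ln(1+x)$, so the ratio $\frac{-\ln(1-z)}{\ln(1+x)}\to -1$ as $x\to -1^+$, not $+1$; it is not merely an additive $\ln 2$ discrepancy. Second, $\Gamma(-\nu)\Gamma(1+\nu)=\pi/\sin(-\nu\pi)$, so $\frac{1}{\Gamma(-\nu)\Gamma(\nu+1)}=-\frac{\sin(\nu\pi)}{\pi}$, not $+\frac{\sin(\nu\pi)}{\pi}$ (indeed, in your part (iii) you use the correct version of this reflection identity). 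The product of the two corrected factors is $\bigl(-\tfrac{\sin(\nu\pi)}{\pi}\bigr)\cdot(-1)=\tfrac{\sin(\nu\pi)}{\pi}$, so the stated limit \eqref{defBS0A} is recovered, but as written each intermediate claim is off by a sign.
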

\begin{proof}  Thanks to  \eqref{obsvers}, it suffices to prove the results for  $ {}^{r\!}G_\nu^{(\lambda)}(x).$

 (i)  By  \eqref{GaussAnd}, \eqref{nonitA} and  \eqref{rgjfdef},
 \begin{equation}\label{defBS}
 \begin{split}
 {}^{r\!}G_\nu^{(\lambda)}(-1) &=
\, {}_2F_1(-\nu,\nu+2\lambda;\lambda+1/2;1)=
\frac{\Gamma(\lambda+1/2)\Gamma(1/2-\lambda)}{\Gamma(\nu+\lambda+1/2)\Gamma(-\nu-\lambda+1/2)} \\
&= \frac{\pi} {\sin((\lambda+1/2) \pi )} \frac{\sin((\nu+\lambda+1/2)\pi )}{\pi}
=\frac{ \cos((\nu+\lambda)\pi)}{\cos(\lambda\pi)}\,,
   \end{split}
\end{equation}
which yields \eqref{defBS00}.

(ii) Using \eqref{nonitA}, \eqref{Nist15421}  and \eqref{rgjfdef}, and noting that $\ln ((1+x)/2)/\ln(1+x)\to 1$ as $x\to -1^+,$  we obtain \eqref{defBS0A}.


(iii) Next, taking $a=-\nu, b=\nu+2\lambda, c=\lambda+1/2$ and $z=(1-x)/2$ in  \eqref{Euler},  and using \eqref{obvsfact},  we obtain
 \begin{equation*}\label{defeqnA}
 \begin{split}
{}_2F_1\Big(\!\!-\nu, & \,\nu+2\lambda;\lambda+\frac 1 2; \frac{1-x}{2}\Big)=\Big(\frac  2{1+x} \Big)^{\lambda-1/2}\,  {}_2F_1\Big(\nu+\lambda+\frac1 2,-\nu-\lambda+\frac 1 2;\lambda+\frac 1 2;\frac{1-x}{2}\Big).
   \end{split}
\end{equation*}
For $\lambda>1/2,$ we find from \eqref{GaussAnd} and \eqref{nonitA} that
 \begin{equation*}\label{defeqnA1}
{}_2F_1\Big(\nu+\lambda+\frac 1 2,-\nu-\lambda+\frac 1 2;\lambda+\frac 1 2;1\Big)=-\frac{\sin(\nu\pi)}\pi
\frac{\Gamma(\lambda-1/2)\Gamma(\lambda+1/2)\Gamma(\nu+1)}{\Gamma(\nu+2\lambda)},
\end{equation*}
so we derive  \eqref{defBS0B} from  \eqref{rgjfdef} and the above.
\end{proof}



As some illustrations, we depict  in Figure \ref{FigForRGGF}  the right generalised Chebyshev/Legendre functions, i.e.,
${}^{r\!}G_\nu^{(\lambda)}(x)$ with $\lambda=0, 1/2$ and for various $\nu$. Note that the left counterparts   ${}^{l}G_\nu^{(\lambda)}(x)=(-1)^{[\nu]}\, {}^{r\!}G_\nu^{(\lambda)}(-x)$ (cf. \eqref{obsvers}). Observe that in the Legendre case (the figure on the right),  ${}^{r\!}G_\nu^{(1/2)}(x)$ with non-integer degree  has a logarithmic singularity at $x=-1$  (cf. \eqref{defBS0A}), while the generalised Chebyshev functions (left) are well defined  at $x=-1$.

\vskip 5pt
\begin{figure}[!ht]
	\begin{center}
	\includegraphics[width=0.45\textwidth]{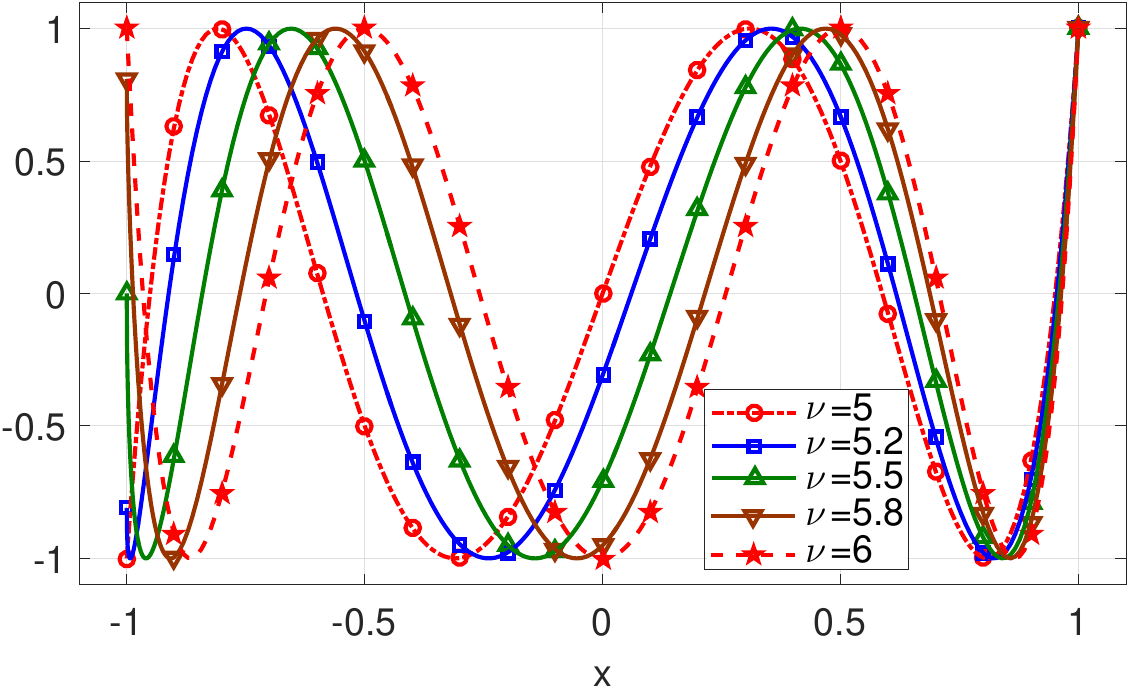} \qquad
		\includegraphics[width=0.45\textwidth]{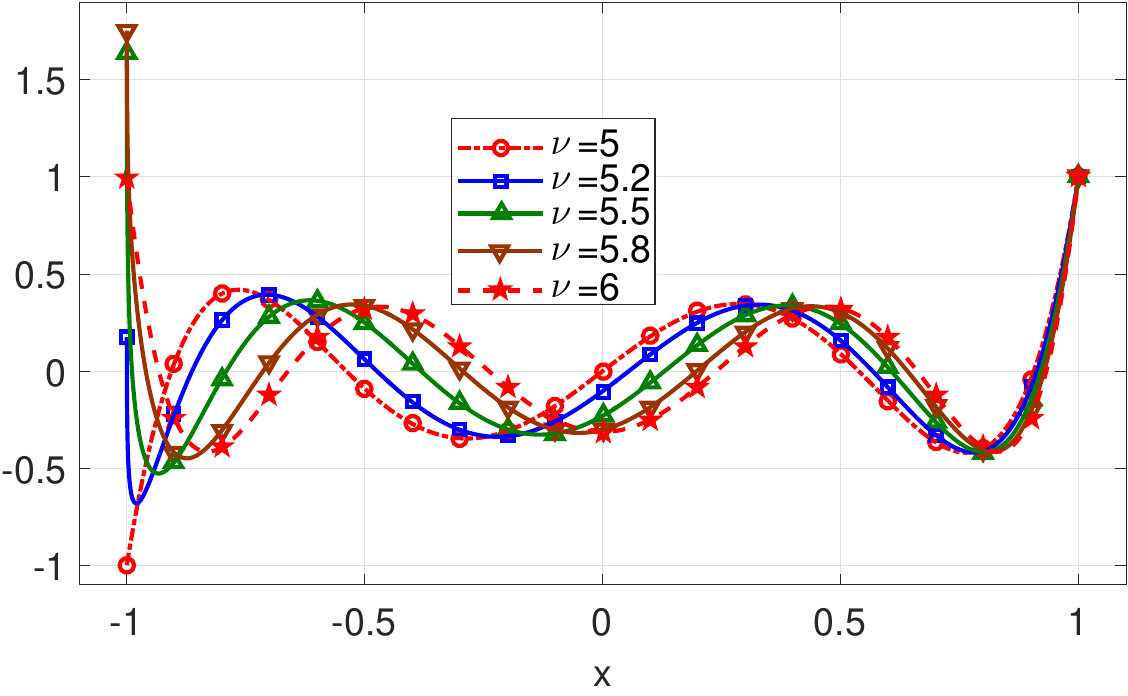}
	\caption{Graphs of ${}^{r\!}G_\nu^{(\lambda)}(x)$ with  $\lambda=0$ (left) and  $\lambda=1/2$ (right) for various $\nu$.}
	\label{FigForRGGF}
		\end{center}
\end{figure}

\subsection{Uniform upper bounds}
The uniform bounds of the GGF-Fs stated in the following two theorems  are of paramount importance  in the forthcoming  error analysis.
\begin{theorem}\label{LBoundForGegPoly} For   $\lambda \ge 1$ and  real $\nu\ge 0,$ we have
	\begin{equation}\label{LBoundGeg}
	\max_{|x|\le 1}\big\{	\omega_{\lambda}(x) |{}^{r\!}G_\nu^{(\lambda)}(x)|,	\; \omega_{\lambda}(x)| {}^{l}G_\nu^{(\lambda)}(x)| \big\}\le
	 \kappa_\nu^{(\lambda)},
	\end{equation}
	where  $\omega_\lambda(x)=(1-x^2)^{\lambda-1/2}$ and
	\begin{equation}\label{kappaNl}
	\begin{split}
	 \kappa_\nu^{(\lambda)}=\frac{\Gamma(\lambda+ 1/2)}{\sqrt \pi}\bigg(
\frac{ \cos^2(\pi\nu/ 2) \Gamma^2( ({\nu}+1)/ 2)}{\Gamma^2( ({\nu}+1)/2+\lambda)}+\frac{4\sin ^2\big(\pi  {\nu} /2\big)}{2\lambda-1+\nu(\nu+2\lambda)}\frac{\Gamma^2({\nu}/2+1)}{\Gamma^2({\nu}/2+\lambda)}\bigg)^{1/2}.
\end{split}	\end{equation}
\end{theorem}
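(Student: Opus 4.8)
The strategy is to deduce \eqref{LBoundGeg} from a single Lyapunov-type (energy) inequality for the \emph{weighted} function
\[
v(x):=\omega_\lambda(x)\,{}^{r\!}G_\nu^{(\lambda)}(x),\qquad x\in[-1,1],
\]
the crucial point being that $v$ solves a Gegenbauer-type equation whose parameter has been reflected to $1-\lambda\le 0$. First, by \eqref{obsvers} we have $\omega_\lambda(x)\,{}^{l}G_\nu^{(\lambda)}(x)=(-1)^{[\nu]}\,\omega_\lambda(-x)\,{}^{r\!}G_\nu^{(\lambda)}(-x)$, so the two quantities in \eqref{LBoundGeg} have the same maximum over $[-1,1]$ and it suffices to bound $\max_{|x|\le1}|v(x)|$. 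Note that for $\lambda\ge1$ the function $v$ is smooth on $(-1,1)$ and extends continuously to $[-1,1]$: it vanishes at $x=1$ because $\omega_\lambda(1)=0$, while at $x=-1$ the endpoint singularity of ${}^{r\!}G_\nu^{(\lambda)}$ recorded in \eqref{defBS0B} is precisely of order $(1+x)^{-(\lambda-1/2)}$ and is cancelled by the factor $(1+x)^{\lambda-1/2}$ hidden in $\omega_\lambda$.

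To obtain the equation for $v$, take $a=-\nu$, $b=\nu+2\lambda$, $c=\lambda+\tfrac12$ and $z=\tfrac{1-x}2$ in \eqref{SLPHF}; then $a+b-c+1=\lambda+\tfrac12$, $a+b-c=\lambda-\tfrac12$ and $z(1-z)=\tfrac{1-x^2}4$, so the chain rule turns \eqref{SLPHF} into
\[
\big((1-x^2)^{\lambda+1/2}\,({}^{r\!}G_\nu^{(\lambda)})'\big)'+\nu(\nu+2\lambda)\,(1-x^2)^{\lambda-1/2}\,{}^{r\!}G_\nu^{(\lambda)}=0,
\]
equivalently $(1-x^2)({}^{r\!}G_\nu^{(\lambda)})''-(2\lambda+1)x({}^{r\!}G_\nu^{(\lambda)})'+\nu(\nu+2\lambda){}^{r\!}G_\nu^{(\lambda)}=0$. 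Substituting ${}^{r\!}G_\nu^{(\lambda)}=(1-x^2)^{-(\lambda-1/2)}v$ and simplifying, I get
\[
(1-x^2)\,v''-(3-2\lambda)\,x\,v'+\Lambda\,v=0,\qquad \Lambda:=2\lambda-1+\nu(\nu+2\lambda)>0,
\]
which is again of Gegenbauer type, now with parameter $1-\lambda$ in place of $\lambda$ and eigenvalue $\Lambda$ (positive since $\lambda\ge1$ and $\nu\ge0$).

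Next I set $\mathcal E(x):=(1-x^2)(v'(x))^2+\Lambda(v(x))^2$ on $(-1,1)$. Using the displayed ODE to eliminate $(1-x^2)v''$, a one-line computation gives $\mathcal E'(x)=4(1-\lambda)\,x\,(v'(x))^2$; since $\lambda\ge1$ this is $\ge0$ on $(-1,0)$ and $\le0$ on $(0,1)$, so $\mathcal E$ is unimodal on $(-1,1)$ with maximum at $x=0$. Because $\Lambda>0$, for every $x\in(-1,1)$
\[
(v(x))^2\le\frac{\mathcal E(x)}{\Lambda}\le\frac{\mathcal E(0)}{\Lambda}=(v(0))^2+\frac{(v'(0))^2}{\Lambda},
\]
and by continuity of $v$ this also holds on all of $[-1,1]$. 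Since $\omega_\lambda(0)=1$ and $\omega_\lambda'(0)=0$, one has $v(0)={}^{r\!}G_\nu^{(\lambda)}(0)={}_2F_1(-\nu,\nu+2\lambda;\lambda+\tfrac12;\tfrac12)$, evaluated by \eqref{Fatzero}, and $v'(0)=({}^{r\!}G_\nu^{(\lambda)})'(0)$, obtained by differentiating \eqref{rgjfdef} once (using $\tfrac{d}{dz}{}_2F_1(a,b;c;z)=\tfrac{ab}{c}{}_2F_1(a+1,b+1;c+1;z)$) and applying \eqref{Fatzero} again; rewriting the resulting $\Gamma$-quotients with the reflection formula \eqref{nonitA} — which converts $\Gamma(\tfrac{1-\nu}2)$ and $\Gamma(1-\tfrac\nu2)$ into factors involving $\cos(\pi\nu/2)\Gamma(\tfrac{\nu+1}2)$ and $\sin(\pi\nu/2)\Gamma(\tfrac\nu2+1)$ respectively — together with $\Gamma(\lambda+\tfrac32)=(\lambda+\tfrac12)\Gamma(\lambda+\tfrac12)$ and $\Gamma(\lambda+1+\tfrac\nu2)=(\lambda+\tfrac\nu2)\Gamma(\lambda+\tfrac\nu2)$ shows that $(v(0))^2$ and $(v'(0))^2/\Lambda$ are exactly the two summands inside the bracket of \eqref{kappaNl}, so the right-hand side above equals $(\kappa_\nu^{(\lambda)})^2$.

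The only genuinely nonroutine step is the choice of weight to track: one must take $v=\omega_\lambda\,{}^{r\!}G_\nu^{(\lambda)}$ and no other power of $\omega_\lambda$, because this is the unique normalization for which the Sturm--Liouville parameter flips from $\lambda$ to $1-\lambda\le0$, and that sign reversal is precisely what makes $\mathcal E'(x)=4(1-\lambda)x(v'(x))^2$ have the right sign so that the energy peaks at the \emph{interior} point $x=0$ rather than at the endpoints (where, for $\lambda>1/2$, a naive energy would be infinite). Everything else — the reduction from ${}^{l}G_\nu^{(\lambda)}$ to ${}^{r\!}G_\nu^{(\lambda)}$, the continuity of $v$ up to the boundary via \eqref{defBS0B}, the energy identity, and the Gamma-function bookkeeping — is elementary.
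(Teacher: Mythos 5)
Your proposal is correct and follows essentially the same route as the paper: after reducing to ${}^{r\!}G_\nu^{(\lambda)}$, you study $v=\omega_\lambda\,{}^{r\!}G_\nu^{(\lambda)}$, derive the transformed equation $(1-x^2)v''+(2\lambda-3)xv'+\Lambda v=0$, and use the energy $\mathcal E=(1-x^2)(v')^2+\Lambda v^2$ (which is just $\Lambda$ times the paper's $H$) whose derivative $4(1-\lambda)x(v')^2$ forces the maximum at $x=0$, then evaluate $v(0)$ and $v'(0)$ via \eqref{Fatzero} and the reflection formula. The only inessential deviations are that you compute $v'(0)$ by differentiating the hypergeometric series (as the paper does in Theorem \ref{BoundGegB}) instead of invoking \eqref{dFCI++} with $s=1$, and your endpoint argument needs only continuity of $v$ rather than of the full energy.
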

\begin{proof}   Thanks to  \eqref{obsvers},  it suffices to  prove the result for  $ {}^{r\!}G_\nu^{(\lambda)}(x).$
For notational simplicity, we denote
\begin{equation}\label{notationGMH}
\begin{split}
& G(x):={}^{r\!}G_\nu^{(\lambda)}(x); \;\;  M(x):= \omega_{\lambda}(x) G(x); \;\;   H(x):=M^2(x)+ {\varrho^{-1}}(1-x^2)\big(M'(x)\big)^2,
\end{split}
\end{equation}
where the constant $ \varrho:=2\lambda-1+\nu(\nu+2\lambda).$

\vskip 3pt
We take three steps to complete the proof.
\vskip 3pt
\underline{\sc Step 1}: ~ Show that  $H(x)$ is continuous on $[-1,1],$ that is, $H(\pm 1)$ are well defined.
 It is evident that by \eqref{obsvers},  $M(1)=0;$ and  from  \eqref{defBS0B}, we find that  $M(-1)$ is a finite value,   when $\lambda\ge 1.$
Next, from \eqref{dFCI++} with $s=1,$  we derive
\begin{equation}\label{M1x2}
(1-x^2)^{1/2} M'(x)
  =(1-2\lambda)\, (1-x^2)^{\lambda-1} \, {}^{r\!}G_{\nu+1}^{(\lambda-1)}(x).
\end{equation}
 Similarly, by  \eqref{obsvers}, $(1-x^2)^{1/2} M'(x)|_{x=1}=0$ for $\lambda>1,$ and it's finite for $\lambda=1.$  We now justify
 $(1-x^2)^{1/2} M'(x)|_{x=-1}$ is also well defined.  We infer from  Proposition \ref{nonvember}   that (a)  if $1\le \lambda <3/2,$
 ${}^{r\!}G_{\nu+1}^{(\lambda-1)}(x)$ is finite at  $x=-1;$  (b) if $\lambda=3/2,$    ${}^{r\!}G_{\nu+1}^{(\lambda-1)}(-1)=0$;  and (c) if $\lambda>3/2,$     ${}^{r\!}G_{\nu+1}^{(\lambda-1)}(x)$ tends to a finite value as $x\to -1.$ Hence, by \eqref{notationGMH}, $H(\pm 1)$ are well defined.

\vskip 3pt
\underline{\sc  Step 2}: ~ Derive the identity:
\begin{equation}\label{LBoundGeg007}
	H'(x)= -\frac{4(\lambda-1)x}{\varrho}\big(M'(x)\big)^2,\quad x\in (-1,1).
	 \end{equation}
Indeed,  taking $a=-\nu, b=\nu+2\lambda,c=\lambda +1/2$ and $z=(1\pm x)/2$ in  \eqref{SLPHF},  we find that
$G(x)$ satisfies the Sturm-Liouville problem
\begin{equation}\label{SLPGGF-F}
\begin{split}
\big\{\omega_{\lambda+1}(x) G'(x)\big\}'+\nu(\nu+2\lambda)\omega_{\lambda}(x) G(x)=0.
\end{split}
\end{equation}
Substituting $G(x)=\omega^{-1}_\lambda(x) M(x)$ into \eqref{SLPGGF-F},
we obtain from  a direct calculation that
	\begin{equation}\label{LBoundGeg-1}
	(1-x^2)M''(x)+(2\lambda -3)x M'(x)+\varrho M(x)=0.
	\end{equation}
Differentiating $H(x)$ and using \eqref{LBoundGeg-1}, leads to
	\begin{equation}\label{LBoundGeg-3}
	\begin{split}
	H'(x) &=  \frac 2 \varrho M' (x)\big\{(1-x^2)M''(x) +\varrho M(x)\big\}-\frac {2x}\varrho (M'(x))^2= -\frac{4(\lambda-1)x}{\varrho}\big(M'(x)\big)^2.
	 \end{split}
	\end{equation}
	
	\vskip 3pt
\underline{\sc Step 3}: ~ Prove the following bounds  and calculate the values at $x=0:$
\begin{equation}\label{LBoundGeg-40}
M^2(x)\le	H(x)\le 	H(0)=M^2(0)+ {\varrho^{-1}}\big(M'(0)\big)^2,\quad \forall\, x\in [-1,1].
\end{equation}
	
  By \eqref{LBoundGeg007},    we have  $H'(x)\equiv 0,$  if $\lambda=1, $ so $H(x)$ is a constant and $H(x)=H(0).$ In other words,   \eqref{LBoundGeg-40}  is true  for $\lambda=1.$

If $\lambda> 1$,  we deduce from  \eqref{LBoundGeg007} that the stationary points of $H(x)$ are $x=0$ or zeros of $M'(x)$ (if any). Let $0\not =\tilde x\in (-1,1)$ be any zero of $M'(x)$ (note:  $M(\tilde x)\not =0$).  Apparently,  by  \eqref{LBoundGeg007},    $H'(x)$ does not change sign in the neighbourhood of  $\tilde x,$ which means $\tilde x$ cannot be an extreme point of $H(x).$  In fact,  $x=0$ is the only extreme point  in $(-1,1).$
We also see from \eqref{LBoundGeg007} that $H'(x)\ge 0$ (resp. $H'(x)\le 0$) as $x\to 1^-$ (resp. $x\to -1^+$).
Note that
 $H(x)$  attains its maximum at $x=0$, as  $H(x)$ is ascending  when $x<0,$ and is descending when $x>0.$
 Therefore,  we obtain \eqref{LBoundGeg-40} from \eqref{notationGMH} and the above reasoning.

	Now, we calculate $H(0).$  From  \eqref{nonitA} and \eqref{Fatzero},  we obtain  that for $\lambda\ge 0,$
	\begin{equation}\label{LBoundGeg-5}
	\begin{split}
	M(0)=&{}^{r\!}G_\nu^{(\lambda)}(0)=
	{}_2F_1\Big(\!\!-\nu, \nu+2\lambda;\lambda+\frac 1 2;\frac{1} 2\Big)=\frac{\sqrt{\pi}\, \Gamma(\lambda+1/2)}{\Gamma(- {\nu}/2+
	1/2)\Gamma({\nu}/2+\lambda+ 1/2)}\\
		=&\sin \big(\pi ( {\nu} +1)/2\big)\frac{\Gamma(\lambda+1/2)\Gamma( {\nu}/2+1/2)}{\sqrt{\pi}\, \Gamma( {\nu}/2+\lambda+
		1/2)},
	\end{split}\end{equation}
which, together with  \eqref{dFCI--}, implies
	\begin{equation}\label{LBoundGeg-6}
	\begin{split}
&\big\{(1-x^2)^{1/2}	M'(x)\big\}|_{x=0}=
(1-2\lambda){}^{r\!}G_{\nu+1}^{(\lambda-1)}(0)\\
&=(1-2\lambda)\sin \big(\pi ( {\nu} +2)/2\big)\frac{\Gamma(\lambda- 1/2)\Gamma({\nu}/2+1)}{\sqrt{\pi}\, \Gamma({\nu}/2+\lambda)}=2\sin \big(\pi  {\nu} /2\big)\frac{\Gamma(\lambda+1/2)\Gamma({\nu}/2+1)}{\sqrt{\pi}\, \Gamma( {\nu}/2+\lambda)}.
\end{split}	\end{equation}
In the last step, we used the identity: $\Gamma(z+1)=z\Gamma(z).$

Substituting  \eqref{LBoundGeg-5}-\eqref{LBoundGeg-6} into  \eqref{LBoundGeg-40},  we obtain the bound in \eqref{LBoundGeg}.
\end{proof}

As a direct consequence of  Theorem \ref{LBoundForGegPoly}, we have the following bound for the Gegenbauer polynomials.
\begin{cor}\label{intcaseI} For real $\lambda\ge 1$ and integer   $l\ge 0$,  we have
\begin{subequations}
\begin{equation}\label{polynomialBnd}
\max_{|x|\le 1} \big\{\omega_\lambda(x) |G_{2l}^{(\lambda)}(x)|\big\}\le \frac{\Gamma(\lambda+1/2)\Gamma(l+1/2)}{\sqrt \pi\, \Gamma(l+\lambda+1/2)};
\end{equation}
\begin{equation}\label{polynomialBnd2}
\max_{|x|\le 1} \big\{\omega_\lambda(x) |G_{2l+1}^{(\lambda)}(x)|\big\}\le \frac{2l+1} {\sqrt{2\lambda-1+(2l+1)(2l+2\lambda+1)}}\,  \frac{\Gamma(\lambda+1/2)\Gamma(l+1/2)}{\sqrt \pi\, \Gamma(l+\lambda+1/2)}.
\end{equation}
\end{subequations}
\end{cor}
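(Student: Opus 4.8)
The plan is to obtain both estimates as a pure specialisation of Theorem \ref{LBoundForGegPoly} to integer degrees. First I would invoke \eqref{obsvers0} in Proposition \ref{obvprop00}, which says that for $\nu\in{\mathbb N}_0$ the right GGF-F is exactly the Gegenbauer polynomial; in particular ${}^{r\!}G_{2l}^{(\lambda)}=G_{2l}^{(\lambda)}$ and ${}^{r\!}G_{2l+1}^{(\lambda)}=G_{2l+1}^{(\lambda)}$. Applying \eqref{LBoundGeg} with $\nu=2l$ and with $\nu=2l+1$ then immediately gives $\max_{|x|\le 1}\omega_\lambda(x)|G_{2l}^{(\lambda)}(x)|\le\kappa_{2l}^{(\lambda)}$ and $\max_{|x|\le 1}\omega_\lambda(x)|G_{2l+1}^{(\lambda)}(x)|\le\kappa_{2l+1}^{(\lambda)}$. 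So the whole task reduces to simplifying the closed form \eqref{kappaNl} at these two families of arguments.

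For the even case $\nu=2l$ I would use $\cos^2(\pi\nu/2)=\cos^2(\pi l)=1$ and $\sin(\pi\nu/2)=\sin(\pi l)=0$, which annihilates the second summand under the square root in \eqref{kappaNl} and leaves only $\Gamma^2((\nu+1)/2)/\Gamma^2((\nu+1)/2+\lambda)=\Gamma^2(l+1/2)/\Gamma^2(l+\lambda+1/2)$; taking the square root and multiplying by $\Gamma(\lambda+1/2)/\sqrt\pi$ yields precisely \eqref{polynomialBnd}. For the odd case $\nu=2l+1$ the two trigonometric factors swap roles: $\cos(\pi\nu/2)=\cos(\pi l+\pi/2)=0$ while $\sin^2(\pi\nu/2)=\cos^2(\pi l)=1$, so only the second summand survives, giving $\kappa_{2l+1}^{(\lambda)}=\frac{\Gamma(\lambda+1/2)}{\sqrt\pi}\cdot\frac{2}{\sqrt{2\lambda-1+(2l+1)(2l+2\lambda+1)}}\cdot\frac{\Gamma(l+3/2)}{\Gamma(l+\lambda+1/2)}$, where I have used $\nu/2+1=l+3/2$, $\nu/2+\lambda=l+\lambda+1/2$, and $\nu(\nu+2\lambda)=(2l+1)(2l+2\lambda+1)$. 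The recurrence $\Gamma(z+1)=z\Gamma(z)$ then gives $2\Gamma(l+3/2)=(2l+1)\Gamma(l+1/2)$, which puts the constant into the form asserted in \eqref{polynomialBnd2}.

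There is essentially no real obstacle: the corollary is a bookkeeping specialisation of Theorem \ref{LBoundForGegPoly}, and the only point needing a little care is tracking the half-integer Gamma-function factors in the odd case so as to match the normalisation on the right-hand side of \eqref{polynomialBnd2}. If desired, I would add the remark that, since $\omega_\lambda(\pm1)=0$ for $\lambda>1/2$, these maxima are genuinely attained in the interior of $[-1,1]$ (indeed at $x=0$, by Step~3 of the proof of Theorem \ref{LBoundForGegPoly}), which is why the constants are built from the values $G_\nu^{(\lambda)}(0)$ and $\{(1-x^2)^{1/2}(\omega_\lambda G_\nu^{(\lambda)})'\}|_{x=0}$.
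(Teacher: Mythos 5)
Your proof is correct and follows exactly the paper's intended route: the corollary is stated as a direct consequence of Theorem \ref{LBoundForGegPoly}, obtained (via \eqref{obsvers0}) by setting $\nu=2l$ and $\nu=2l+1$ in \eqref{kappaNl}, where the trigonometric factors annihilate one summand in each case and the identity $2\Gamma(l+3/2)=(2l+1)\Gamma(l+1/2)$ produces the constant in \eqref{polynomialBnd2}. The only slight inaccuracy is your optional closing remark: in the odd case $M(0)={}^{r\!}G_{2l+1}^{(\lambda)}(0)=0$, so the bound comes from the derivative term $\varrho^{-1}(M'(0))^2$ and the maximum of $\omega_\lambda|G_{2l+1}^{(\lambda)}|$ need not be attained at $x=0$; this does not affect the proof itself.
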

\begin{rem}\label{nmfdmew} {\em The bounds for Gegenbauer polynomials multiplying by a different weight function: $(1-x^2)^{\lambda/2-1/4}$ can be found in  \cite{Nevai1994SIMA}.  To the best of our knowledge, the bounds herein are new. }
\end{rem}

 The bound  in  Theorem \ref{LBoundForGegPoly} is valid for $\lambda\ge 1.$ In the analysis,
 we also need to  use  the bound  with   $0< \lambda < 1$.  Note that in this case, we have to multiply  the GGF-Fs by  a different weight function, and conduct the analysis in a slightly  different manner.
\begin{theorem}\label{BoundGegB}
	For real  $0< \lambda < 1$ and real $\nu\ge 0$, we have
	\begin{equation}\label{BoundGegB-0}
	\max_{|x|\le 1}\big\{	(1-x^2)^{\lambda/2} |{}^{r\!}G_\nu^{(\lambda)}(x)|,	\; (1-x^2)^{\lambda/2} | {}^{l}G_\nu^{(\lambda)}(x)| \big\}\le
	\widehat	\kappa_\nu^{(\lambda)},
	\end{equation}
	where
	\begin{equation}\label{kappaB}
	\begin{split}
	\widehat\kappa_\nu^{(\lambda)}=\frac{\Gamma(\lambda+1/2)}{\sqrt \pi}\bigg(
	\frac{ \cos^2(\pi\nu/ 2)\Gamma^2(\nu/2+1/2) }{\Gamma^2( ({\nu}+1)/2+\lambda)}+\frac{4\sin ^2\big(\pi  {\nu} /2\big)}{\nu^2+2\lambda \nu+\lambda}\frac{\Gamma^2(\nu/2+1)}{\Gamma^2({\nu}/2+\lambda)}\bigg)^{1/2}.
	\end{split}	\end{equation}
\end{theorem}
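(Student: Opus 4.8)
The plan is to follow the three-step strategy used for Theorem~\ref{LBoundForGegPoly}, but to replace the Sonine-type auxiliary function by one with a carefully tuned weight, as forced by a \emph{singular} zeroth-order term in the governing equation. By \eqref{obsvers} it again suffices to prove the bound for $G:={}^{r\!}G_\nu^{(\lambda)}$; set
\[
N(x):=(1-x^2)^{\lambda/2}G(x),\qquad \hat\varrho:=\nu^2+2\lambda\nu+\lambda=\lambda+\nu(\nu+2\lambda)>0 .
\]
Since $\lambda<1$, one checks from the endpoint asymptotics of Proposition~\ref{nonvember} (distinguishing $0<\lambda<1/2$, $\lambda=1/2$ and $1/2<\lambda<1$; the case $\nu\in{\mathbb N}_0$ being trivial) that $N$ extends continuously to $[-1,1]$ with $N(\pm1)=0$, so it is enough to estimate $N^2$ on $(-1,1)$. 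From the Sturm--Liouville equation \eqref{SLPGGF-F} one gets $(1-x^2)G''-(2\lambda+1)xG'+\nu(\nu+2\lambda)G=0$, and substituting $G=(1-x^2)^{-\lambda/2}N$ and simplifying yields
\[
(1-x^2)N''-xN'+\Big(\hat\varrho+\frac{\lambda(1-\lambda)x^2}{1-x^2}\Big)N=0 .
\]
Contrary to equation \eqref{LBoundGeg-1} for $M$ in Theorem~\ref{LBoundForGegPoly}, the extra ingredient is the singular coefficient $\lambda(1-\lambda)x^2/(1-x^2)$; it is exactly what the naive choice $H=N^2+\hat\varrho^{-1}(1-x^2)(N')^2$ cannot absorb, since that gives only $H'=-\tfrac{2\lambda(1-\lambda)x^2}{\hat\varrho(1-x^2)}NN'$, with indefinite sign.

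The key step is to choose the weight so that the $NN'$ term is cancelled. Set
\[
H(x):=N^2(x)+a(x)\big(N'(x)\big)^2,\qquad a(x):=\frac{(1-x^2)^2}{\hat\varrho-(\lambda+\nu)^2x^2},
\]
and observe the identities $\hat\varrho(1-x^2)+\lambda(1-\lambda)x^2=\hat\varrho-(\lambda+\nu)^2x^2$ and $\hat\varrho-(\lambda+\nu)^2=\lambda(1-\lambda)>0$, so that $a$ is positive and smooth on $[-1,1]$ with $a(0)=1/\hat\varrho$. A direct differentiation of $H$, in which the $NN'$ contributions cancel by the $N$-equation and the choice of $a$, gives
\[
H'(x)=-\frac{2\lambda(1-\lambda)\,x\,(1-x^2)}{\big(\hat\varrho-(\lambda+\nu)^2x^2\big)^2}\,\big(N'(x)\big)^2 .
\]
As $0<\lambda<1$, this is $\le0$ on $[0,1)$ and $\ge0$ on $(-1,0]$, so $H$ is non-increasing on $[0,1)$ and non-decreasing on $(-1,0]$, whence $H(x)\le H(0)$ for all $x\in(-1,1)$; together with $N^2\le H$ (because $a\ge0$) and $N^2(\pm1)=0\le H(0)$ this yields $\max_{|x|\le1}N^2(x)\le H(0)$. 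Finally $H(0)=N^2(0)+\hat\varrho^{-1}(N'(0))^2$, and $N(0)={}^{r\!}G_\nu^{(\lambda)}(0)$, $N'(0)=\big({}^{r\!}G_\nu^{(\lambda)}\big)'(0)$ are evaluated through \eqref{Fatzero} exactly as $M(0)$ and $\{(1-x^2)^{1/2}M'(x)\}|_{x=0}$ in \eqref{LBoundGeg-5}--\eqref{LBoundGeg-6} (for $0<\lambda\le1/2$, $N'(0)$ is obtained by differentiating the series \eqref{rgjfdef} directly); a short computation gives $H(0)=\big(\widehat\kappa_\nu^{(\lambda)}\big)^2$, which is the claim.

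The main obstacle is pinning down the weight $a(x)$: unlike in Theorem~\ref{LBoundForGegPoly} it is not proportional to $1-x^2$, and the point is to notice that $\hat\varrho(1-x^2)+\lambda(1-\lambda)x^2$ factors as $\hat\varrho-(\lambda+\nu)^2x^2$ and remains bounded below by $\lambda(1-\lambda)>0$ on $[-1,1]$ precisely because $\lambda<1$ --- the very restriction that makes $(1-x^2)^{\lambda/2}G$, rather than $\omega_\lambda G$, the right quantity to bound in this regime. A secondary technicality is the continuity and vanishing of $N$ at $x=\pm1$, which again relies on $\lambda<1$ and on the three endpoint regimes of Proposition~\ref{nonvember}; once these are settled, the remaining manipulations parallel the proof of Theorem~\ref{LBoundForGegPoly}.
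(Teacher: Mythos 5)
Your proposal is correct and takes essentially the same route as the paper: your weight $a(x)=(1-x^2)^2/\bigl(\hat\varrho-(\lambda+\nu)^2x^2\bigr)$ coincides with $1/\rho(x)$ in \eqref{notationGMHB}, and the monotonicity argument for $H$ together with the evaluation of $N(0)$ and $N'(0)$ via \eqref{BoundGegB-5} and \eqref{Fatzero} reproduces the paper's proof and its constant $\widehat\kappa_\nu^{(\lambda)}$. The only cosmetic differences are that you check $N(\pm 1)=0$ directly rather than invoking continuity of $\widehat H$ up to $\pm 1$, and that your expression for $H'$ differs from \eqref{BoundGegB-2} by the positive factor $a(x)$ (your computation is the consistent one), which changes neither the sign nor the conclusion.
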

\begin{proof}
	Once again, thanks to  \eqref{obsvers},  it suffices to  prove the result for  $ {}^{r\!}G_\nu^{(\lambda)}(x).$ Here,
	we denote
	\begin{equation}\label{notationGMHB}
	\begin{split}
	&  \widehat M(x):= (1-x^2)^{\lambda/2} {}^{r\!}G_\nu^{(\lambda)}(x); \quad \widehat H(x):=\widehat M^2(x)+ \frac 1 {\rho(x)}\big(\widehat M'(x)\big)^2,\\
	& \rho(x):=\big((\nu+\lambda)^2(1-x^2)-\lambda (\lambda-1)\big)(1-x^2)^{-2}.
	\end{split}
	\end{equation}
Using Proposition \ref{nonvember}, we can justify as with  Step 1 in the proof of  Theorem \ref{LBoundForGegPoly} that  $\widehat H(x)$ is continuous on $[-1,1].$  A direct calculation from   \eqref{SLPGGF-F} leads to
	\begin{equation}\label{BoundGegB-1}
	(1-x^2)\widehat M''(x)-x \widehat M'(x)+(1-x^2)\rho(x) \widehat M(x)=0,  \quad x\in (-1,1).
	\end{equation}
Like  \eqref{LBoundGeg007}, we can  show
	\begin{equation}\label{BoundGegB-2}
	\begin{split}
	\widehat H'(x)
	 &=\frac{2\lambda(\lambda-1)x}{(\lambda+\nu)^2(1-x^2)^2-\lambda(\lambda-1)(1-x^2)}\big(\widehat M'(x)\big)^2,\quad x\in (-1,1).
	\end{split}
	\end{equation}
For $0<\lambda<1,$     $	\widehat H(x)$ is increasing for $x<0,$ and decreasing  for $x>0,$  so  $H(x)$ attains its maximum at $x=0.$ Thus,
	\begin{equation}\label{BoundGegB-3}
	\widehat M ^2(x)\le\widehat	H(x)\le \widehat	H(0)=\widehat M^2(0)+ {\rho^{-1}(0)}\big(\widehat M'(0)\big)^2,\quad \forall\, x\in [-1,1].
	\end{equation}
	By \eqref{LBoundGeg-5},
	\begin{equation}\label{BoundGegB-4}
	\begin{split}
	\widehat M(0)
	=&\cos \big(\pi {\nu}/2\big)\frac{\Gamma(\lambda+1/2)\Gamma( {\nu}/2+1/2)}{\sqrt{\pi}\, \Gamma( {\nu}/2+\lambda+
		1/2)}.
	\end{split}\end{equation}
	Recall the identity (cf. \cite[(15.5.1)]{Olver2010Book}):
	\begin{equation}\label{BoundGegB-5}\frac{d}{dx} {}_2F_1(a, b;c;z)=\frac{ab}{c} {}_2F_1(a+1, b+1;c+1;z).
	\end{equation}
From  \eqref{rgjfdef} and  \eqref{BoundGegB-5}, we obtain
	\begin{equation}\label{BoundGegB-6}
	\begin{split}
	\frac{d}{dx}{}^{r\!}G_\nu^{(\lambda)}(x)
	=&\frac{d}{dx} {}_2F_1\Big(\!\!-\nu, \nu+2\lambda;\lambda+\frac 1 2;\frac{1-x} 2\Big)\\
	=&	\frac{\nu(\nu+2\lambda)}{2\lambda+1} {}_2F_1\Big(\!\!-\nu+1, \nu+2\lambda+1;\lambda+\frac 3 2;\frac{1-x} 2\Big).
	\end{split}\end{equation}
	Thanks to
	$$\widehat M'(x)= -\lambda x(1-x^2)^{\lambda/2-1}\, {}^{r\!}G_\nu^{(\lambda)}(x)+(1-x^2)^{\lambda/2}\frac{d}{dx}{}^{r\!}G_\nu^{(\lambda)}(x),$$
	we deduce from \eqref{nonitA}, \eqref{Fatzero} and  \eqref{BoundGegB-6} that
	\begin{equation}\label{BoundGegB-7}
	\begin{split}
	\{\rho^{-1/2}(x)\widehat M'(x)\}|_{x=0}
	=&\frac{2\sin(\pi\nu/2 )}{\sqrt{\pi}}\frac{\Gamma(\nu/2+1)\,\Gamma(\lambda+1/2)}{\sqrt{\nu^2+2\lambda \nu+\lambda}\,\, \Gamma(\lambda+\nu/2)}.
	\end{split}\end{equation}
	From  \eqref{BoundGegB-3}-\eqref{BoundGegB-4} and \eqref{BoundGegB-7}, we derive
	\eqref{BoundGegB-0}-\eqref{kappaB}.	
\end{proof}

\section{Fractional integral/derivative formulas of GGF-Fs}\label{sect:fintderG}
\setcounter{equation}{0}
\setcounter{lmm}{0}
\setcounter{thm}{0}

In this section, we show that  GGF-Fs enjoy some remarkable  fractional  calculus properties,
which are  important pieces of the puzzle for the analysis. 

\subsection{Fractional  integrals/derivatives and related  spaces  of functions}
 Let $\Omega=(a,b)\subset {\mathbb R}$ be a finite open interval. For  real $p\in [1, \infty],$ let $L^p(\Omega)$ (resp. $W^{m,p}(\Omega)$ with $m\in {\mathbb N}$)
 be the usual
$p$-Lebesgue space (resp. Sobolev space), equipped  with the norm $\|\cdot\|_{L^p(\Omega)}$ (resp. $\|\cdot\|_{W^{m,p}(\Omega)}$).
 Let $C(\bar \Omega)$ be the classical space of continuous functions on $[a,b].$
Denote by ${\rm AC}(\Omega)$
the space of {\em absolutely continuous} functions on $[a,b].$
Recall that (cf. \cite{Samko1993Book,Leoni2009Book}):  {\em $f(x)\in {\rm AC}(\Omega)$ if and only if $f(x)\in L^1(\Omega)$, $f(x)$ has a derivative $f'(x)$
almost everywhere on $[a,b]$ such that   $f'(x)\in L^1(\Omega), $      and $f$ has the integral representation:}
\begin{equation}\label{AVint}
f(x)=f(a)+\int_a^x f'(t)\,dt,\quad \forall x\in [a,b].
\end{equation}
Note that    ${\rm AC}(\Omega)=W^{1,1}(\Omega)$ (cf. \!\cite[Sec.\! 7.2]{Leoni2009Book}).
Let ${\rm BV}(\Omega)$ be the space of functions of bounded  variation on $\Omega.$ It is known that {\em every function in  ${\rm BV}(\Omega)$ has at most a countable number of discontinuities, which are  either jump or removable discontinuities,  
so it is differentiable almost everywhere}. As such, there hold
\begin{equation}\label{concluA}
W^{1,1}(\Omega)={\rm AC}(\Omega)\subsetneq {\rm BV}(\Omega) \subsetneq L^1(\Omega).
\end{equation}



In what follows,
we denote the ordinary derivatives by  $D=d/dx$ and  $D^k=d^k/dx^k $ with integer  $k\ge 2$.
Recall the definitions of  the  RL fractional integrals and derivatives (cf. \cite[P. 33, P. 44]{Samko1993Book}).
\begin{defn}\label{RLFintder} {\em
 For any   $u\in L^1(\Omega),$  the left-sided and right-sided  RL  fractional integrals of order $s \in {\mathbb R}^+$ are defined  by
	\begin{equation}\label{leftintRL}
	\begin{split}
	&{}_{a} I_{x}^s\, u (x)=\frac 1 {\Gamma(s)}\int_{a}^x \frac{u(y)}{(x-y)^{1-s}} dy; \quad
	{}_{x}I_{b}^s\, u (x)=\frac 1 {\Gamma(s)}\int_{x}^b \frac{u(y)}{(y-x)^{1-s}} dy, \;\;\; x\in \Omega.
	\end{split}
	\end{equation}
	A function $u\in L^1(\Omega)$ is said to possess  a   left-sided {\rm(}resp. right-sided\,{\rm)} RL fractional derivative
 ${}_{a}^{R} D_{x}^s u$ {\rm (}resp. ${}_{x}^{R} D_{b}^s u${\rm)}  of
order $s \in (0,1),$ if  ${}_{a} I_{x}^{1-s}\, u\in {\rm AC}(\Omega) $ {\rm(}resp. ${}_{x} I_{b}^{1-s}\, u\in {\rm AC}(\Omega)${\rm)}.  Moreover,   we have
\begin{equation}\label{reprezentacja20s01}
{}_{a}^{R} D_{x}^{s} u= D\big\{{}_{a} I_{x}^{1-s}\, u\big\},\quad {}_{x}^{R} D_{b}^{s} u= -D\big\{{}_{x} I_{b}^{1-s}\, u\big\},\quad x\in \Omega.
\end{equation}
   Similarly, for $s\in [k-1,k)$ with $k\in {\mathbb N},$ the  higher order  left-sided and right-sided  RL fractional derivatives for $u\in L^1(\Omega)$
   satisfying  ${}_{a} I_{x}^{1-s}\, u, {}_{x} I_{b}^{1-s}\, u \in {\rm AC}^k(\Omega)$ {\rm(}i.e., the space of all $f(x)$ having continuous derivatives up to order $k-1$ on $\Omega$ and $f^{(k-1)}\in {\rm AC}(\Omega)${\rm)} are defined  by
	\begin{equation}\label{left-fra-der-rl}
{}_{a}^{R} D_{x}^s\, u=D^k\big\{\, {}_{a}I_{x}^{k-s}\, u\big\};\quad
{}_x^R D_{b}^s\, u(x)=(-1)^kD^k\big\{\,{}_xI_{b}^{k-s}\, u\big\}.
	\end{equation} }
	\end{defn}

\vskip 4pt

As a generalisation of  \eqref{AVint},  we have the following fractional integral representation,   which can also be regarded as the   definition of RL fractional derivatives alternative to Definition \ref{RLFintder} (see \cite[Prop.\!  3]{Bourdin2015ADE} and    \cite[P. 45]{Samko1993Book}).
\begin{prop}\label{fractrep}
 A function $u\in L^1(\Omega)$  possesses  a left-sided RL fractional derivative
 ${}_{a}^{R} D_{x}^s u$  of
order $s \in (0,1),$ if and only if there exist  $C_a\in {\mathbb R}$ and  $\phi \in L^1(\Omega)$ such that
\begin{equation}\label{reprezentacja}
u(x)= \frac{C_a}{\Gamma(s)}(x-a)^{s-1} + {}_{a} I_{x}^s\,\phi(x) \;\;  \text{a.e. on}\;\;  [a,b],
\end{equation}
where $C_a=({}_{a} I_{x}^{1-s}u)(a)$ and $\phi(x) = {}_{a}^{R} D_{x}^s\, u(x)$ a.e. on $ [a,b].$

Similarly, a function
$u\in L^1(\Omega)$  has a right-sided RL fractional derivative
 ${}_{x}^{R} D_{b}^s u$  of
order $s \in (0,1),$ if and only if there exist  $C_b\in {\mathbb R}$ and  $\psi\in L^1(\Omega)$ such that
\begin{equation}\label{reprezentacja2}
u(x)= \frac{C_b}{\Gamma(s)}(b-x)^{s-1} + {}_{x} I_{b}^s\,\psi(x) \;\;   \text{a.e. on}\;\;  [a,b],
\end{equation}
where $C_b=({}_{x} I_{b}^{1-s}u)(b)$ and $\psi(x) = {}_{x}^{R} D_{b}^s u(x) $ a.e. on $ [a,b].$
\end{prop}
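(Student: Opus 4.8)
The plan is to derive both implications directly from three classical ingredients: the semigroup law ${}_{a}I_x^{\alpha}\,{}_{a}I_x^{\beta}u={}_{a}I_x^{\alpha+\beta}u$ for $u\in L^1(\Omega)$ and $\alpha,\beta>0$; the power rule ${}_{a}I_x^{\mu}\big[(x-a)^{\beta-1}\big]=\frac{\Gamma(\beta)}{\Gamma(\beta+\mu)}(x-a)^{\beta+\mu-1}$ for $\beta,\mu>0$; and the integral characterisation \eqref{AVint} of ${\rm AC}(\Omega)$ (cf. \cite{Samko1993Book,Leoni2009Book}). Only the left-sided statement needs to be proved, since the right-sided one follows from the change of variable $x\mapsto a+b-x$, which interchanges the two one-sided fractional integrals and the two one-sided fractional derivatives.

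For sufficiency, I would assume \eqref{reprezentacja} and apply ${}_{a}I_x^{1-s}$ to both sides. The power rule with $\beta=s$, $\mu=1-s$ gives ${}_{a}I_x^{1-s}\big[(x-a)^{s-1}\big]=\Gamma(s)$, and the semigroup law gives ${}_{a}I_x^{1-s}\,{}_{a}I_x^{s}\phi={}_{a}I_x^{1}\phi=\int_a^x\phi(t)\,dt$, so that
\[
{}_{a}I_x^{1-s}u(x)=C_a+\int_a^x\phi(t)\,dt,\qquad x\in[a,b].
\]
Since $\phi\in L^1(\Omega)$, the right-hand side lies in ${\rm AC}(\Omega)$; hence ${}_{a}^{R}D_x^{s}u$ exists in the sense of Definition \ref{RLFintder}, and differentiating together with \eqref{reprezentacja20s01} yields ${}_{a}^{R}D_x^{s}u=\phi$ a.e., while evaluation at $x=a$ gives $\big({}_{a}I_x^{1-s}u\big)(a)=C_a$.

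For necessity, I would start from ${}_{a}I_x^{1-s}u\in{\rm AC}(\Omega)$, set $C_a:=\big({}_{a}I_x^{1-s}u\big)(a)$ and $\phi:={}_{a}^{R}D_x^{s}u=D\{{}_{a}I_x^{1-s}u\}\in L^1(\Omega)$, and use \eqref{AVint} to write ${}_{a}I_x^{1-s}u(x)=C_a+{}_{a}I_x^{1}\phi(x)$. Applying ${}_{a}I_x^{s}$, invoking the semigroup law twice and ${}_{a}I_x^{s}[1]=(x-a)^{s}/\Gamma(s+1)$, I obtain
\[
\int_a^x u(t)\,dt={}_{a}I_x^{1}u(x)=\frac{C_a}{\Gamma(s+1)}(x-a)^{s}+{}_{a}I_x^{s+1}\phi(x).
\]
The right-hand side is absolutely continuous (its last term is ${}_{a}I_x^{1}$ applied to ${}_{a}I_x^{s}\phi\in L^1(\Omega)$), so both sides may be differentiated; using Lebesgue differentiation for $u\in L^1(\Omega)$, the identity $\Gamma(s+1)=s\Gamma(s)$, and $D\{{}_{a}I_x^{s+1}\phi\}={}_{a}I_x^{s}\phi$, this produces exactly \eqref{reprezentacja}.

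The only genuinely delicate points, which I would make explicit, are: (i) that a function in ${\rm AC}(\Omega)$ has a well-defined value at the left endpoint $x=a$, which is what makes $C_a$ meaningful and is part of \eqref{AVint}; and (ii) the validity of the semigroup identity for merely $L^1$ data, which is the one classical fact I import rather than reprove. Everything else reduces to Fubini's theorem and the Beta-function evaluation of elementary integrals.
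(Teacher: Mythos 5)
Your argument is correct, and it is essentially the classical proof of this representation result: the paper itself does not reprove Proposition \ref{fractrep} but defers to \cite[Prop.~3]{Bourdin2015ADE} and \cite[P.~45]{Samko1993Book}, where the same two steps appear --- sufficiency by applying ${}_{a}I_{x}^{1-s}$ to \eqref{reprezentacja} and using the power rule plus the semigroup law at total order one (valid everywhere by Fubini--Tonelli), and necessity by applying ${}_{a}I_{x}^{s}$ to the ${\rm AC}$ representation \eqref{AVint} of ${}_{a}I_{x}^{1-s}u$ and differentiating a.e. Your reduction of the right-sided case via the reflection $x\mapsto a+b-x$ and your handling of the endpoint value $C_a=({}_{a}I_{x}^{1-s}u)(a)$ through continuity of the absolutely continuous representative are both consistent with Definition \ref{RLFintder}, so nothing further is needed.
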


\begin{rem}\label{equivlenceresult} {\em We infer from Proposition \ref{fractrep} the equivalence of these two fractional spaces:
\begin{equation}\label{Ws1Ber007}
 W_{\!{\rm RL},a+}^{s,1}(\Omega):=\big\{u\in L^1(\Omega) :  {}_{a} I_{x}^{1-s} u\in {\rm AC}(\Omega)\big\}\equiv
\big\{u\in L^1(\Omega) :  {}_{a}^{R} D_{x}^s u \in L^1(\Omega)\big\},
\end{equation}
for $s\in (0,1).$  The inclusion  ``\,$\subseteq$\," follows immediately from  $u\in  L^1(\Omega), {}_{a} I_{x}^{1-s} u\in  {\rm AC}(\Omega)$ and Definition \ref{RLFintder}.
To show the opposite inclusion ``\,$\supseteq$\,", we find
\begin{equation*}
\begin{split}
 \int^b_a |{}_{a} I_{x}^{1-s} u|dx=&\frac{1}{\Gamma(1-s)} \int^b_a \Big|\int_ a^x (x-y)^{-s}u(y)dy\Big|dx \le \frac{1}{\Gamma(1-s)} \int^b_a \int_ a^x (x-y)^{-s}|u(y)|dydx \\
 =&\frac{1}{\Gamma(1-s)} \int^b_a \Big(\int_ y^b (x-y)^{-s}dx\Big) |u(y)|dy=\frac{1}{\Gamma(2-s)} \int^b_a  (b-y)^{1-s} |u(y)|dy\\
 \le &\frac{(b-a)^{1-s}}{\Gamma(2-s)} \int^b_a  |u(y)|dy\,.
 \end{split}
 \end{equation*}
   Since $u\in L^1(\Omega)$, we conclude $ {}_{a} I_{x}^{1-s} u \in  L^1(\Omega)$. As ${}_{a}^{R} D_{x}^s u =D \{{}_{a} I_{x}^{1-s} u\}  \in L^1(\Omega)$, we infer that  ${}_{a} I_{x}^{1-s} u\in W^{1,1}(\Omega)(\equiv{\rm AC}(\Omega))$.    Therefore,  the equivalence in \eqref{Ws1Ber007} follows. The same property for $ W_{\!{\rm RL},b-}^{s,1}(\Omega)$ with    ${}_{x} I_{b}^{1-s} u$, ${}_{x}^{R} D_{b}^s u$ in place of ${}_{a} I_{x}^{1-s} u,  {}_{a}^{R} D_{x}^s u,$ respectively, holds.   We  refer to   \cite{Berg2016} for insightful discussions of  the relation between $ W_{\!{\rm RL},a+}^{s,1}(\Omega)$ and the fractional  Sobolev space
in the sense of  Gagliardo \cite{Nezza2012BSM}. }
\end{rem}

Recall the explicit formulas  $($cf.  \cite{Samko1993Book}$)$:   for  real $\eta>-1$ and $s>0,$
\begin{equation}\label{intformu}
{}_a I_{x}^s \, (x-a)^\eta=  \dfrac{\Gamma(\eta+1)}{\Gamma(\eta+s+1)} (x-a)^{\eta+s};\quad
 {}_{a}^{R} D_{x}^s  \, (x-a)^\eta=\frac{\Gamma(\eta+1)}{\Gamma(\eta-s+1)} (x-a)^{\eta-s}.
\end{equation}
We have similar formulas for right-sided RL fractional integral/derivative of $(b-x)^\eta.$ In particular,
\begin{equation}\label{intformu007}
{}_a I_{x}^{1-s} \, (x-a)^{s-1}=  \Gamma(s);\quad
{}_x I_{b}^{1-s} \, (b-x)^{s-1}=  \Gamma(s),\;\;\; s\in (0,1),
\end{equation}
which implies the boundary values $C_a$ and $C_b$ in Proposition \ref{fractrep} are not always zero as $x\to a^+$ and $x\to b^-,$ respectively.
On the other hand,   if $\eta-s+1=-n$ with $n\in {\mathbb N}_0$ in the second formula of \eqref{intformu} (note: $\Gamma(-n)=\infty$),  then
\begin{equation}\label{0intformu}
 {}_{a}^{R} D_{x}^s  \, (x-a)^{s-n-1}= {}_{x}^{R} D_{b}^s  \, (b-x)^{s-n-1}=0,\quad {\rm for}\;\; s>n\in {\mathbb N}_0.
\end{equation}
We see that the first term in the integral representations in  \eqref{reprezentacja}-\eqref{reprezentacja2} actually  plays the same role as a ``constant"  in \eqref{AVint}.

%

\subsection{Important formulas}
\begin{theorem}\label{FCI}
 For  real $   \nu\ge s>0$ and real  $\lambda>-1/2$, the GGF-Fs on $(-1,1)$ satisfy the RL fractional integral formulas:  
 \vspace*{-4pt}
 \begin{subequations}
\begin{equation}\label{FCI++}
\begin{split}
 {}_{x} I_{1}^{s}\big\{\omega_\lambda(x)
  \,{}^{r\!}G_{\nu}^{(\lambda)}(x)\big\}
 =&h^{(-s)}_{\lambda}\, \omega_{\lambda+s}(x)\, {}^{r\!}G_{\nu-s}^{(\lambda+s)}(x),\\
 \end{split}
\end{equation}
 \begin{equation}\label{FCI--}
\begin{split}
 {}_{-1} I_{x}^{s}
 \big\{ \omega_\lambda(x)\, {}^{l}G_{\nu}^{(\lambda)}(x)\big\}
 =&\,(-1)^{ [ \nu]+[ \nu-s]}\, h^{(-s)}_{\lambda}\,
 \omega_{\lambda+s}(x)\,{}^{l}G_{\nu-s}^{(\lambda+s)}(x).
 \end{split}
\end{equation}
\end{subequations}
For  real $\lambda>s-1/2$ and real $\nu\ge 0,$ the GGF-Fs on $(-1,1)$ satisfy the RL fractional derivative  formulas:  
\vspace*{-8pt}
\begin{subequations}
\begin{equation}\label{dFCI++}
\begin{split}
 {}_{x}^R D_{1}^{s}\big\{\omega_{\lambda}(x)
  \,{}^{r\!}G_{\nu}^{(\lambda)}(x)\big\}
 =&h^{(s)}_{\lambda}\, \omega_{\lambda-s}(x)\, {}^{r\!}G_{\nu+s}^{(\lambda-s)}(x),\\
 \end{split}
\end{equation}
 \begin{equation}\label{dFCI--}
\begin{split}
 {}_{-1}^{~~R} D_{x}^{s}
 \big\{ \omega_{\lambda}(x) \, {}^{l}G_{\nu}^{(\lambda)}(x)\big\}
 =&\,(-1)^{[ \nu]+[ \nu+s]}\, h^{(s)}_{\lambda}\,
 \omega_{\lambda-s}(x)\,{}^{l}G_{\nu+s}^{(\lambda-s)}(x).
 \end{split}
\end{equation}
\end{subequations}
In the above, we denote
\begin{equation}\label{alsoAhomega}
\omega_\alpha(x)=(1-x^2)^{\alpha-\frac 12},\quad h^{(\beta)}_{\lambda}=\frac{2^\beta \,\Gamma(\lambda+1/2)}{\Gamma(\lambda-\beta+1/2)}.
\end{equation}
\end{theorem}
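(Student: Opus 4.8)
The plan is to derive all four formulas from the single identity \eqref{FCI++}; the remaining three then come out essentially for free. Before anything else I would note that $\omega_\lambda\,{}^{r\!}G_\nu^{(\lambda)}\in L^1(-1,1)$: near $x=1$ the integrand is $O\big((1-x)^{\lambda-1/2}\big)$, and near $x=-1$, by Proposition \ref{nonvember}, it is $O\big((1+x)^{\lambda-1/2}\big)$ for $\lambda<1/2$, bounded for $\lambda>1/2$, and $O\big(\log(1+x)\big)$ for $\lambda=1/2$; all are integrable since $\lambda>-1/2$. Hence every Riemann--Liouville operator below is well defined in the sense of Definition \ref{RLFintder}.

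To prove \eqref{FCI++} I would make the substitution $t=(1-x)/2$. Writing $x=1-2t$ and $y=1-2\tau$ turns the right-sided integral into $2^s$ times the left-sided one, ${}_{x}I_{1}^{s}f(x)=2^s\,{}_{0}I_{t}^{s}\{f(1-2\,\cdot)\}(t)$, while Euler's identity \eqref{Euler} (with $a=-\nu$, $b=\nu+2\lambda$, $c=\lambda+\tfrac12$, $z=t$) collapses the weight:
\[
\omega_\lambda(x)\,{}^{r\!}G_\nu^{(\lambda)}(x)=4^{\lambda-1/2}\,t^{\lambda-1/2}(1-t)^{\lambda-1/2}\,{}_2F_1\!\Big(-\nu,\nu+2\lambda;\lambda+\tfrac12;t\Big)=4^{\lambda-1/2}\,t^{\lambda-1/2}\,{}_2F_1\!\Big(\lambda+\nu+\tfrac12,\tfrac12-\lambda-\nu;\lambda+\tfrac12;t\Big).
\]
Thus the whole statement reduces to the clean hypergeometric fractional-integration identity
\[
{}_{0}I_{t}^{s}\big\{t^{c-1}\,{}_2F_1(a,b;c;t)\big\}=\frac{\Gamma(c)}{\Gamma(c+s)}\,t^{c+s-1}\,{}_2F_1(a,b;c+s;t),\qquad c>0,\ s>0,\ 0<t<1,
\]
with $c=\lambda+\tfrac12>0$, $a=\lambda+\nu+\tfrac12$, $b=\tfrac12-\lambda-\nu$. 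I would prove this identity term by term: inserting the power series of ${}_2F_1$ and using ${}_{0}I_{t}^{s}\,t^{\eta}=\Gamma(\eta+1)\Gamma(\eta+s+1)^{-1}t^{\eta+s}$ from \eqref{intformu} with $\eta=c-1+j\ge c-1>-1$, the Pochhammer collapse $(c)_j^{-1}\,\Gamma(c+j)/\Gamma(c+j+s)=\Gamma(c)\,\Gamma(c+s)^{-1}(c+s)_j^{-1}$ converts the series into the desired one; the interchange of summation and integration is legitimate because $\int_0^t(t-\tau)^{s-1}\tau^{c-1+j}\,d\tau=B(s,c+j)\,t^{c+s-1+j}$ with $B(s,c+j)\to0$ as $j\to\infty$, so the series of absolute terms is dominated by $\sup_j B(s,c+j)\cdot\sum_j|c_j|t^j<\infty$ for each $t<1$. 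Applying the identity, running the same Euler transformation backwards to recognise $\omega_{\lambda+s}\,{}^{r\!}G_{\nu-s}^{(\lambda+s)}$ on the right (here $\nu\ge s$ is exactly the condition that makes $\nu-s\ge0$, so this GGF-F is defined), and collecting the powers of $2$ and $4$, produces precisely the factor $h^{(-s)}_\lambda=2^{-s}\Gamma(\lambda+\tfrac12)/\Gamma(\lambda+s+\tfrac12)$ of \eqref{alsoAhomega}, i.e. \eqref{FCI++}.

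For \eqref{FCI--} I would use the reflection $x\mapsto-x$: substituting $y\mapsto-y$ in ${}_{-1}I_{x}^{s}$ and invoking ${}^{l}G_\nu^{(\lambda)}(-w)=(-1)^{[\nu]}{}^{r\!}G_\nu^{(\lambda)}(w)$ from \eqref{obsvers} together with the evenness of $\omega_\lambda$ turns the left side of \eqref{FCI--} into $(-1)^{[\nu]}\,{}_{w}I_{1}^{s}\{\omega_\lambda\,{}^{r\!}G_\nu^{(\lambda)}\}(-x)$; applying \eqref{FCI++} and then re-expressing ${}^{r\!}G_{\nu-s}^{(\lambda+s)}(-x)$ through \eqref{obsvers} brings in the extra sign $(-1)^{[\nu-s]}$, giving the stated $(-1)^{[\nu]+[\nu-s]}$. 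Finally, for the derivative formulas \eqref{dFCI++}--\eqref{dFCI--} I would replace $\lambda$ by $\lambda-s$ (legitimate since $\lambda>s-\tfrac12$ makes $\lambda-s>-\tfrac12$) and $\nu$ by $\nu+s$ in \eqref{FCI++}, obtaining ${}_{x}I_{1}^{s}\{\omega_{\lambda-s}\,{}^{r\!}G_{\nu+s}^{(\lambda-s)}\}=h^{(-s)}_{\lambda-s}\,\omega_\lambda\,{}^{r\!}G_\nu^{(\lambda)}$; then apply ${}_{x}^{R}D_{1}^{s}$ to both sides, using the fundamental composition rule ${}_{x}^{R}D_{1}^{s}\{{}_{x}I_{1}^{s}g\}=g$ valid for all $g\in L^1(\Omega)$ (a consequence of the semigroup property ${}_{x}I_{1}^{k-s}\,{}_{x}I_{1}^{s}={}_{x}I_{1}^{k}$ and $D^k{}_{x}I_{1}^{k}=\mathrm{Id}$; cf. \cite{Samko1993Book}). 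This simultaneously shows the fractional derivative exists and yields ${}_{x}^{R}D_{1}^{s}\{\omega_\lambda\,{}^{r\!}G_\nu^{(\lambda)}\}=(h^{(-s)}_{\lambda-s})^{-1}\omega_{\lambda-s}\,{}^{r\!}G_{\nu+s}^{(\lambda-s)}$ with $(h^{(-s)}_{\lambda-s})^{-1}=2^{s}\Gamma(\lambda+\tfrac12)/\Gamma(\lambda-s+\tfrac12)=h^{(s)}_\lambda$; \eqref{dFCI--} follows the same way from \eqref{FCI--}. The main obstacle is largely bookkeeping — keeping the two Euler transformations, the $2^s$ from the substitution, and the various $\Gamma$-quotients aligned so that the constant comes out exactly $h^{(\pm s)}_\lambda$ — together with the routine but necessary justification of the term-by-term fractional integration and of the endpoint integrability that makes the Riemann--Liouville operators meaningful.
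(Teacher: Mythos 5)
Your proposal is correct and follows essentially the same route as the paper: after the substitution $t=(1-x)/2$ and Euler's transformation \eqref{Euler}, your key hypergeometric identity is exactly Bateman's fractional integral formula \eqref{Bateman} (which the paper cites from Andrews--Askey--Roy while you re-derive it term by term), and the derivative formulas are obtained, as in the paper, from the parameter shift $\lambda\mapsto\lambda-s$, $\nu\mapsto\nu+s$ together with ${}_{x}^R D_{1}^{s}\,{}_{x}I_{1}^{s}=\mathrm{Id}$ and $(h^{(-s)}_{\lambda-s})^{-1}=h^{(s)}_{\lambda}$. The only cosmetic deviations are your self-contained termwise proof of the Bateman identity and your use of the reflection $x\mapsto-x$ with \eqref{obsvers} to get \eqref{FCI--}, where the paper instead repeats the substitution with $z=(1+x)/2$; both are sound.
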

\begin{proof}
Recall the Bateman's fractional integral formula (cf. \cite[P. 313]{Andrews1999Book}):  for $c,s>0$ and $|z|<1,$
\begin{align}
\label{Bateman}
{}_2F_1(a,b;c+s;z) = z^{1-(c+s)} \frac{\Gamma(c+s)}{\Gamma(c)\Gamma(s)}
\int_0^{z} t^{c-1} (z-t)^{s-1} {}_2F_1(a,b;c;t) \,dt,
\end{align}
which, together with \eqref{Euler},  yields
 \begin{equation}\label{BatemanB}
\begin{split}
z^{c+s-1}&(1-z)^{c+s-a-b}{}_2F_1(c-a+s,c-b+s;c+s;z)
\\
&= \frac{\Gamma(c+s)}{\Gamma(c)\Gamma(s)}
\int_0^{z} t^{c-1} (z-t)^{s-1} (1-t)^{c-a-b}{}_2F_1(c-a,c-b;c;t)\,dt.
\end{split}
\end{equation}
Applying the variable substitutions:  $z={(1-x)}/{2}$ and $t= {(1-y)}/{2} $
 to  \eqref{BatemanB},  leads to
 \begin{equation}\label{BatemanBV2}
\begin{split}
(1-x)&^{c+s-1}(1+x)^{c+s-a-b}{}_2F_1\Big(c-a+s,c-b+s;c+s;\frac{1-x}{2}\Big)
\\
&= \frac{2^s\, \Gamma(c+s)}{\Gamma(c)\Gamma(s)}
\int_{x}^{1} (1-y)^{c-1}  (1+y)^{c-a-b}(y-x)^{s-1}{}_2F_1\Big(c-a,c-b;c;\frac{1-y}{2}\Big)dy.
\end{split}
\end{equation}
Taking $a= \nu+\lambda+1/2$, $b= -\nu-\lambda+1/2$ and $c= \lambda+1/2$ in \eqref{BatemanBV2}, we obtain
 \begin{equation*}\label{BatemanBV2+}
\begin{split}
&(1-x^2)^{\lambda+s+1/2} \, {}_2F_1\Big(s-\nu,\nu+s+2\lambda;\lambda+s+\frac 1 2;\frac{1-x}{2}\Big)
\\
&= \frac{2^s\, \Gamma(\lambda+s+1/2)}{\Gamma(\lambda+1/2)\Gamma(s)}
\int_{x}^{1} (1-y^2)^{\lambda-1/2}(y-x)^{s-1}{}_2F_1\Big(\!\!-\nu,\nu+2\lambda;\lambda+\frac1 2;\frac{1-y}{2}\Big)dy.
\end{split}
\end{equation*}
From \eqref{leftintRL} and \eqref{rgjfdef}, we  derive \eqref{FCI++} immediately.

Similarly,  performing  the variable substitutions: $z={(1+x)}/{2}$ and $t= {(1+y)}/{2} $
 to  \eqref{BatemanB},   we can obtain \eqref{FCI--} in the same manner.

Applying ${}_{x}^R D_{1}^{s}$ to both sides of \eqref{FCI++} and noting that    ${}_{x}^R D_{1}^{s}\,  {}_{x}I_{1}^{s}$ is an identity operator   (cf. \cite{Samko1993Book}),  we  obtain  
for real $   \nu\ge s>0$ and real  $\lambda>-1/2$,
\begin{equation}\label{dFCI+}
\begin{split}
\omega_{\lambda}(x)
  \,{}^{r\!}G_{\nu}^{(\lambda)}(x)
 =&h^{(-s)}_{\lambda}\, {}_{x}^R D_{1}^{s} \big\{\omega_{\lambda+s}(x)\, {}^{r\!}G_{\nu-s}^{(\lambda+s)}(x)\big\}.
 \end{split}
\end{equation}
Replacing  $\lambda, \nu$ in the above equation by $\lambda-s, \nu+s$, and noting
that
\begin{equation}\label{hmuest}
\big( h^{(-s)}_{\lambda-s}\big)^{-1}=\frac{2^s \,\Gamma(\lambda+1/2) }{\Gamma(\lambda-s+1/2)}=h^{(s)}_{\lambda},
\end{equation}
we obtain \eqref{dFCI++}.  Similarly, applying ${}_{-1}^{~~R} D_{x}^{s}$ to both sides of \eqref{FCI--}, we can derive \eqref{dFCI--}.
\end{proof}


\vskip 10pt
	
\section{Chebyshev approximations of functions  in fractional Sobolev-type spaces}\label{mainsect:ms}
\setcounter{equation}{0}
\setcounter{lmm}{0}
\setcounter{thm}{0}
\setcounter{cor}{0}

In this section, we introduce a new  theoretical framework and present the main results on Chebyshev approximations.
 Here, we focus on the approximation of functions with interior singularities, and shall  extend the estimates to deal with functions with endpoint singularities in Subsection \ref{endptsingula}.

\subsection{Fractional Sobolev-type spaces}
For  a fixed   $\theta\in \Omega:=(-1,1), $ we denote $\Omega_\theta^-:=(-1,\theta)$ and  $\Omega_\theta^+:=(\theta,1).$  For   $m\in {\mathbb N}_0$ and $s\in (0,1),$
we define the fractional Sobolev-type space:
	\begin{equation}\label{fracA}
\begin{split}
{\mathbb  W}^{m+s}_{\theta}(\Omega):= \big\{ &u\in L^1(\Omega)\, :\,   u, u',\cdots, u^{(m-1)}\in {\rm AC}(\Omega)  \;\; {\rm and}\\
& {}_{x}I_{\theta}^{1-s} u^{(m)}\in  {\rm BV}(\Omega_\theta^- ),\quad {}_{\theta}I_{x}^{1-s} u^{(m)}\in  {\rm BV}(\Omega_\theta^+)\big\},
	\end{split}
\end{equation}
equipped with the norm (note:  ${\rm AC}(\Omega)=W^{1,1}(\Omega) $):
\begin{equation}\label{Wmsnorm}
\|u\|_{{\mathbb  W}^{m+s}_{\theta}(\Omega)}=\sum_{k=0}^m \|u^{(k)}\|_{L^1(\Omega)} +   U^{m,s}_\theta,
\end{equation}
where the semi-norm is defined by
\begin{itemize}
\item for $m=1,2,\cdots,$
\begin{equation}\label{seminormF}
  \begin{split}
  U^{m,s}_\theta:=&\int_{-1}^\theta  \big | {}_{x}^R D_{\theta}^{s} \, u^{(m)}(x) \big|\,  dx
  +\int^{1}_\theta    |{}_{\theta}^R D_{x}^{s}  u^{(m)}(x)| dx\\
  &+\big|\big\{ {}_{x}I_{\theta}^{1-s} u^{(m)}\big\}(\theta+)\big|+\big|\big\{ {}_{\theta}I_{x}^{1-s} u^{(m)}\big\}(\theta-)\big|;
  \end{split}
  \end{equation}
  \item for  $m=0$ and $s\in (1/2,1),$
      \begin{equation}\label{Pnorm00}
  \begin{split}
 U^{0,s}_\theta:= & \int_{-1}^\theta  \big | {}_{x}^R D_{\theta}^{s} \, u(x) \big|\,\omega_{s/2}(x)  dx
  +\int^{1}_\theta    |{}_{\theta}^R D_{x}^{s}  u(x)| \, \omega_{s/2}(x) dx\\
  &\quad\quad\quad+\big|\big\{\omega_{s/2}\, {}_{x}I_{\theta}^{1-s} u\big\}(\theta+)\big|+\big|\big\{\omega_{s/2}\, {}_{\theta}I_{x}^{1-s} u\big\}(\theta-)\big|.
  \end{split}
  \end{equation}
 \end{itemize}

 \begin{rem}\label{propremk} {\em  Some remarks are in order.
 \begin{itemize}
 \item[(i)] If $u\in {\mathbb  W}^{m+s}_{\theta}(\Omega),$  we infer from Proposition \ref{fractrep} and Remark \ref{equivlenceresult} that
${}_{\theta}^R D_{x}^{s}  u^{(m)}, {}_{x}^R D_{\theta}^{s} \, u^{(m)}$ are well-defined and belong to $L^1(\Omega).$
\item[(ii)]  The parameter $\theta$ is related to the location of the singular point of $u(x).$ For example, if $u=|x|,$ then $\theta=0.$
For a function of multiple interior  singular points, we partition $(-1,1)$ into multiple subintervals and introduce the same number of parameters accordingly.       
\item[(iii)]  The so-defined space ${\mathbb  W}^{m+s}_{\theta}(\Omega)$  is an intermediate  fractional  space in the sense that
\begin{equation}\label{WWsps}
W^{1,1}(\Omega) \subsetneq  {\mathbb  W}^{s}_{\theta}(\Omega)  \subsetneq L^1(\Omega); \quad {\mathbb  W}^{m+1}(\Omega) \subsetneq  {\mathbb  W}^{m+s}_{\theta}(\Omega)  \subsetneq  {\mathbb  W}^{m}(\Omega),\;\; m\ge1.
\end{equation}
In particular, when $s\to 1^-,$  the space ${\mathbb  W}^{m+s}_{\theta}(\Omega)$ reduces to
\begin{equation}\label{intergerASobB}
\begin{split}
{\mathbb  W}^{m+1}(\Omega):= \big\{ &u\in L^1(\Omega)\, :\,    u',\cdots, u^{(m-1)}\in {\rm AC}(\Omega), \; u^{(m)}\in  {\rm BV}(\Omega)\big\},
	\end{split}
\end{equation}
which has been used in \cite{Trefethen2013Book,Majidian2017ANM} for Chebyshev approximation of functions with limited regularity.
 \end{itemize}
}
\end{rem}

To deal with endpoint singularities,  letting $\theta\to \pm 1,$ we denote the corresponding  fractional spaces by
\begin{equation}\label{resultpm1}
\begin{split}
& {\mathbb  W}^{m+s}_{+}(\Omega):= \big\{ u\in L^1(\Omega)\, :\,   u, u',\cdots, u^{(m-1)}\in {\rm AC}(\Omega),\;  {}_{x}I_{1}^{1-s} u^{(m)}\in  {\rm BV}(\Omega)\big\},\\
& {\mathbb  W}^{m+s}_{-}(\Omega):= \big\{ u\in L^1(\Omega)\, :\,   u, u',\cdots, u^{(m-1)}\in {\rm AC}(\Omega),\;  {}_{-1}I_{x}^{1-s} u^{(m)}\in  {\rm BV}(\Omega)\big\}.
\end{split}
\end{equation}
Accordingly, the semi-norm
  $U^{m,s}_+$ (resp.    $U^{m,s}_-$)  only involves the right  (resp.  left) RL fractional integrals/derivatives.
We remark that for $s\in (0,1),$ ${\mathbb  W}^{s}_{-}(\Omega) \supsetneq  W_{\!{\rm RL},-1+}^{s,1}(\Omega)$  defined in \eqref{Ws1Ber007}.


\subsection{Exact formulas  and decay rate of Chebyshev expansion coefficients}  Let $\omega(x)=(1-x^2)^{-1/2}=\omega_0(x)$ be the Chebyshev weight function.  For any  $ u\in L^2_{\omega}(\Omega)$, we expand it in Chebyshev series and denote the  partial sum  by
\begin{equation}\label{Cbexp}
u(x)=\sum_{n=0}^{\infty}{\!'}\,\hat u_n^C\, T_n(x),\quad \pi_N^C  u(x)=\sum_{n=0}^{N}{\!'}\,\hat u_n^C\, T_n(x),
\end{equation}
where the prime denotes a sum whose first  term is
halved, and
\begin{equation}\label{ancoef}
\hat u_n^C=\frac{2}{\pi}\int_{-1}^{1}{u(x)\frac{T_n(x)}{\sqrt{1-x^2}}}dx=\frac{2}{\pi}\int_{0}^{\pi}u(\cos \theta)\cos(n\theta)d\theta.
\end{equation}

 Recall the formula of integration by parts  involving
the Stieltjes integrals   (cf. \cite[(1.20)]{Klebaner2005Book}).
\begin{lmm}\label{IntByPartsInW11}
For any  $u,v\in {\rm BV}(\Omega)$, we have
	\begin{align} \label{IPPW1122}
	\int_{a}^b u(x-)  \, dv(x) =\{u(x)v(x)\}\big|_{a+}^{b-} - \int_{a}^b  v(x-)\,  d u(x),
	\end{align}
where the notation $u(x\pm)$ stands for the right- and left-limit of $u$ at $x,$ respectively. Here, $u(x-), v(x-)$ can also be replaced by $u(x+), v(x+).$

In particular, if   $u,v\in {\rm AC}(\Omega)$, we have
	\begin{align} \label{IPPW11}
	\int_{a}^b u (x) v'(x) \, dx + \int_{a}^b  u'(x) v(x) \, dx  =\{u(x)v(x)\}\big|_{a}^b.
	\end{align}
\end{lmm}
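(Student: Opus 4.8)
The statement to prove is Lemma~\ref{IntByPartsInW11}, the integration-by-parts formula for functions of bounded variation, together with its specialisation to absolutely continuous functions. The plan is to deduce this directly from the standard theory of Riemann--Stieltjes (or Lebesgue--Stieltjes) integration, since for $u,v\in\mathrm{BV}(\Omega)$ both $\int_a^b u(x-)\,dv(x)$ and $\int_a^b v(x-)\,du(x)$ exist as Stieltjes integrals — the left-continuous modifications are chosen precisely so that the integrands and integrators share no common one-sided discontinuities in a way that would destroy existence. First I would recall that a BV function on $[a,b]$ induces a signed Borel measure (its distributional derivative), and the product rule for such measures, $d(uv)=u(x-)\,dv(x)+v(x-)\,du(x)$ on $(a,b)$, is the measure-theoretic content we need; integrating this identity over $(a+,b-)$ and using the fundamental theorem of calculus for Stieltjes integrals, $\int_{a+}^{b-} d(uv) = \{u(x)v(x)\}\big|_{a+}^{b-}$, yields \eqref{IPPW1122}. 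The remark that $u(x-),v(x-)$ may be replaced by $u(x+),v(x+)$ then follows because the set of discontinuities of a BV function is at most countable (hence $du$- and $dv$-null except at atoms, and the atomic contributions are symmetric under the swap, as one checks by summing the jump terms).

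For the absolutely continuous case \eqref{IPPW11}, I would simply note that if $u,v\in\mathrm{AC}(\Omega)=W^{1,1}(\Omega)$, then $u,v$ are continuous (so $u(x-)=u(x)$, $v(x-)=v(x)$, and the boundary terms are genuine values rather than one-sided limits), the induced measures are absolutely continuous with densities $u',v'\in L^1(\Omega)$, so $dv(x)=v'(x)\,dx$ and $du(x)=u'(x)\,dx$, and \eqref{IPPW1122} becomes the classical formula $\int_a^b uv'\,dx+\int_a^b u'v\,dx=\{uv\}\big|_a^b$. This is the elementary product-rule integration by parts for $W^{1,1}$ functions, which can also be proved from scratch by approximating $u,v$ by smooth functions in $W^{1,1}$-norm and passing to the limit, using that $uv\in W^{1,1}$ with $(uv)'=u'v+uv'$.

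The main obstacle — really the only delicate point — is the careful bookkeeping of the jump terms when justifying both the product formula $d(uv)=u(x-)\,dv+v(x-)\,du$ at atoms of the measures and the assertion that one may replace left limits by right limits. At a common jump point $x_0$ of $u$ and $v$, the atom of $d(uv)$ has mass $u(x_0+)v(x_0+)-u(x_0-)v(x_0-)$, while $u(x_0-)\,\Delta v(x_0)+v(x_0-)\,\Delta u(x_0)=u(x_0-)v(x_0+)-u(x_0-)v(x_0-)+v(x_0-)u(x_0+)-v(x_0-)u(x_0-)$, and the discrepancy $\Delta u(x_0)\,\Delta v(x_0)$ must be accounted for; choosing the left-continuous integrand against the left-continuous-in-the-appropriate-sense integrator is exactly what makes the telescoping work, and the symmetry $u(x_0-)\Delta v+v(x_0-)\Delta u = u(x_0+)\Delta v+v(x_0+)\Delta u - 2\Delta u\,\Delta v$ is not quite symmetric, so the precise statement needs the convention stated. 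Rather than reprove this, I would cite the standard reference already invoked in the excerpt, \cite[(1.20)]{Klebaner2005Book}, for \eqref{IPPW1122}, and give only the short continuity-plus-absolute-continuity argument reducing \eqref{IPPW11} to it.
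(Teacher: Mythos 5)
The paper does not actually prove this lemma: formula \eqref{IPPW1122} is simply recalled from \cite[(1.20)]{Klebaner2005Book}, with the absolutely continuous case \eqref{IPPW11} left as the obvious specialisation. So your final move --- citing Klebaner for the BV identity and reducing \eqref{IPPW11} to it via continuity of $u,v$ and $du=u'\,dx$, $dv=v'\,dx$ --- coincides with the paper's treatment, and that part of your plan, including the AC reduction, is sound.

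The problem is the measure-theoretic sketch you offer in support, which goes wrong exactly at the point you flag as delicate. The product rule for the Lebesgue--Stieltjes measures of two BV functions is \emph{not} $d(uv)=u(x-)\,dv+v(x-)\,du$: at a common jump $x_0$ the atom of $d(uv)$ has mass $u(x_0+)v(x_0+)-u(x_0-)v(x_0-)$, which exceeds $u(x_0-)\Delta v(x_0)+v(x_0-)\Delta u(x_0)$ by $\Delta u(x_0)\,\Delta v(x_0)$; the correct identity uses opposite one-sided limits, $d(uv)=u(x-)\,dv+v(x+)\,du$, equivalently both left limits plus the covariation term $\sum\Delta u\,\Delta v$. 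Your remark that choosing left-continuous versions "makes the telescoping work" does not repair this, and the claim that the replacement of left by right limits is legitimate because "the atomic contributions are symmetric under the swap" is false in general: swapping both limits shifts the two integrals by $2\sum\Delta u\,\Delta v$, as your own displayed computation shows. Concretely, with $u=v$ equal to a unit step at an interior point, the left-hand side of \eqref{IPPW1122} (with left limits in both integrals) is $0$ while the right-hand side is $1$, so a version of the identity with the same one-sided limit in both integrals and no covariation correction can only hold when $u$ and $v$ have no common discontinuities. This is harmless for the paper, since the lemma is only ever applied with one factor continuous (cf. \eqref{FracIntPart22}), but a self-contained proof along your lines would have to either add that hypothesis, state the mixed-limit form, or carry the $\sum\Delta u\,\Delta v$ term; as written, the step "integrating $d(uv)=u(x-)\,dv+v(x-)\,du$ yields \eqref{IPPW1122}" would fail.
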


  As highlighted  in \cite{Majidian2017ANM},   the error analysis of Chebyshev expansions in various norms, and the related interpolation and quadrature errors  essentially depends on estimating the decay rate  of $|\hat u_n^C|.$  We present the main results below.
\begin{theorem}\label{IdentityForUn} Given $\theta\in (-1,1),$ if  $u\in {\mathbb  W}^{m+s}_{\theta}(\Omega) $ with
$s\in (0,1)$ and integer  $m\ge 0$, then for $n\ge m+s>1/2,$
	\begin{equation}\label{HatUnCaseC}
	\begin{split}
	 \hat u_n^{C}&= \frac{1}{\sqrt{\pi}\,2^{m+s-1}\Gamma(m+s+1/2)}
	\bigg\{(-1)^{ n+[ n-s]}\int_{-1}^\theta   {}_{x}^R D_{\theta}^{s} \, u^{(m)}(x) \,  {}^{l}G_{n-m-s}^{(m+s)}(x) \, \omega_{m+s}(x)\, dx
\\&+ (-1)^{ n+[ n-s]}\big\{{}_{x}I_{\theta}^{1-s}u^{(m)}  (x) \, {}^{l}G_{n-m-s}^{(m+s)}(x)\, \omega_{m+s}(x)\big\}\big|_{x=\theta-}\\	&
+\int^{1}_\theta    {}_{\theta}^R D_{x}^{s}  u^{(m)}(x) \,  {}^{r\!}G_{n-m-s}^{(m+s)}(x) \, \omega_{m+s}(x)\, dx+\big\{ {}_{\theta}I_{x}^{1-s}u^{(m)}  (x)\, {}^{r\!}G_{n-m-s}^{(m+s)}(x)\, \omega_{m+s}(x) \big\}\big|_{x=\theta+}\bigg\},
	\end{split}
	\end{equation}
where  $\omega_{\lambda}(x)=(1-x^2)^{\lambda-1/2}.$

For  $n\ge m+s,$ we have  the following upper bounds:
	\begin{itemize}
	\item[(i)] If $m=0$ and  $s\in (1/2,1)$,   then we have
\begin{equation}\label{BoundForUnRealA}
\begin{split}
|\hat u_n^{C}|\le & \frac{U_\theta^{0,s}}{2^{s-1} \pi}\max\bigg\{
\frac{ \Gamma( ({n-s}+1)/2) }{\Gamma( ({n+s}+1)/2)},\frac{2}{\sqrt{n^2-s^2+s}}\frac{\Gamma((n-s)/2+1)}{\Gamma((n+s)/2)}\bigg\}.
\end{split}
\end{equation}
	\item[(ii)]If $m\ge 1$,  then we have
		\begin{equation}\label{BoundForUnRealB}
	\begin{split}
	|\hat u_n^{C}|\le & \frac{U_\theta^{m,s}}{2^{m+s-1} \pi}
	\frac{ \Gamma( ({n-m-s}+1)/ 2)}{\Gamma( ({n+m+s}+1)/2)} .
	\end{split}
	\end{equation}
\end{itemize}
\end{theorem}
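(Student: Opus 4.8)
The plan is to establish the exact identity \eqref{HatUnCaseC} first, and then extract the two upper bounds from it by invoking the uniform estimates on GGF-Fs proved in Theorems \ref{LBoundForGegPoly} and \ref{BoundGegB}. For the identity, the starting point is the observation that $T_n = G_n^{(0)}$ and that $\omega(x)T_n(x) = \omega_0(x)\,{}^{r\!}G_n^{(0)}(x)$, so that repeated use of the Rodrigues-type relation \eqref{RodriguesF} (or, more directly, its fractional analogue \eqref{FCI++}--\eqref{FCI--}) rewrites $\omega_0(x)T_n(x)$ as an $(m+s)$-th order RL fractional derivative of $\omega_{m+s}(x)\,{}^{r\!}G_{n-m-s}^{(m+s)}(x)$ on $\Omega_\theta^+$ and similarly with ${}^{l}G$ on $\Omega_\theta^-$. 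Concretely, \eqref{dFCI++} with $\lambda = m+s$, degree $\nu = n-m-s$, and order $s$ yields ${}_{x}^{R}D_1^{s}\{\omega_{m+s}\,{}^{r\!}G_{n-m-s}^{(m+s)}\} = h^{(s)}_{m+s}\,\omega_{m}\,{}^{r\!}G_{n-m}^{(m)}$, and then $m$ further ordinary derivatives (via \eqref{RodriguesF} applied $m$ times, tracking the constants $h^{(1)}_{\cdot}$) bring this down to a multiple of $\omega_0\,{}^{r\!}G_n^{(0)} = \omega\,T_n$. Splitting $\int_{-1}^1$ at $x=\theta$ and substituting this representation of $\omega T_n$ into \eqref{ancoef} sets up the integration-by-parts argument.

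The core of the identity proof is then $m+1$ successive integrations by parts on each of the two subintervals $\Omega_\theta^\pm$. I would do the $m$ ordinary integrations by parts first, using \eqref{IPPW11} and the hypotheses $u, u', \dots, u^{(m-1)} \in \mathrm{AC}(\Omega)$ to move derivatives off the GGF-F factor and onto $u$; the boundary terms at $x = \pm 1$ vanish because the weight $\omega_{m+s}$ (and its derivatives paired against the smooth factors) carries enough vanishing, while the boundary terms at $x = \theta^\pm$ telescope. The final, fractional integration by parts is the delicate one: one writes the remaining integral against ${}_{x}^{R}D_\theta^{s}\{\cdots\}$ and uses the fractional integration-by-parts / Stieltjes formula \eqref{IPPW1122} together with the semigroup and adjoint properties of ${}_{a}I_x^{1-s}$, ${}_{x}I_b^{1-s}$ (from Proposition \ref{fractrep} and the definitions in Definition \ref{RLFintder}). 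This produces exactly the four terms in \eqref{HatUnCaseC}: two integrals of ${}_{x}^{R}D_\theta^{s}u^{(m)}$ and ${}_{\theta}^{R}D_x^{s}u^{(m)}$ against ${}^{l}G$ and ${}^{r\!}G$ respectively, plus the two limit terms involving ${}_{x}I_\theta^{1-s}u^{(m)}$ and ${}_{\theta}I_x^{1-s}u^{(m)}$ at $x = \theta^\mp$. The hypothesis $u \in {\mathbb W}^{m+s}_\theta(\Omega)$ is precisely what guarantees that ${}_{x}I_\theta^{1-s}u^{(m)} \in \mathrm{BV}(\Omega_\theta^-)$ and ${}_{\theta}I_x^{1-s}u^{(m)} \in \mathrm{BV}(\Omega_\theta^+)$, so that \eqref{IPPW1122} applies and the limit terms $\{\cdots\}(\theta\mp)$ exist as finite one-sided limits. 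I expect the bookkeeping of the multiplicative constants — assembling $h^{(s)}_{m+s}$, the product of the $h^{(1)}_\cdot$ from the $m$ Rodrigues steps, and the $2/\pi$ in \eqref{ancoef} into the single prefactor $1/(\sqrt\pi\,2^{m+s-1}\Gamma(m+s+1/2))$ — to be the most error-prone but routine part, while the genuine obstacle is justifying the fractional integration by parts at the endpoints $x=\pm1$, where $\omega_{m+s}$ degenerates and one must check the boundary contributions really vanish under the stated (weakest possible) regularity.

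Once \eqref{HatUnCaseC} is in hand, the bounds follow by triangle inequality. For $m \ge 1$, I bound $|\,{}^{l}G_{n-m-s}^{(m+s)}(x)\,\omega_{m+s}(x)|$ and $|\,{}^{r\!}G_{n-m-s}^{(m+s)}(x)\,\omega_{m+s}(x)|$ pointwise by $\kappa_{n-m-s}^{(m+s)}$ using Theorem \ref{LBoundForGegPoly} (legitimate since $\lambda = m+s \ge 1$), factor this constant out, and recognize the remaining sum of the four quantities as exactly $U_\theta^{m,s}$ from \eqref{seminormF}. It then remains to check that $\kappa_{n-m-s}^{(m+s)} \le \tfrac{1}{\sqrt\pi}\,\Gamma(m+s+1/2)\,\Gamma((n-m-s+1)/2)/\Gamma((n+m+s+1)/2)$; from the explicit formula \eqref{kappaNl} this amounts to checking that the second term under the square root is dominated by the first (a Gamma-ratio monotonicity estimate, using that for $\nu = n-m-s$ the factor $\Gamma^2(\nu/2+1)/\Gamma^2(\nu/2+\lambda)$ is comparable to $\Gamma^2((\nu+1)/2)/\Gamma^2((\nu+1)/2+\lambda)$ while $4\sin^2/(2\lambda-1+\nu(\nu+2\lambda))$ is small), together with $\cos^2 + \sin^2 \le 1$. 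For case (i), $m=0$ and $s \in (1/2,1)$ so $\lambda = s \in (1/2,1)$ and Theorem \ref{LBoundForGegPoly} no longer applies; here one uses instead the weight $(1-x^2)^{s/2} = \omega_{s/2}(x)$ and Theorem \ref{BoundGegB}, which is exactly why the semi-norm $U_\theta^{0,s}$ in \eqref{Pnorm00} carries the factor $\omega_{s/2}$. Writing $\omega_{m+s}(x) = \omega_s(x) = (1-x^2)^{s-1/2}$ and splitting $(1-x^2)^{s-1/2} = (1-x^2)^{s/2}\cdot(1-x^2)^{(s-1)/2}$ — wait, that overshoots; more precisely one pairs one factor $\omega_{s/2}$ with the GGF-F to apply Theorem \ref{BoundGegB}, leaving the matching $\omega_{s/2}$ with the fractional-derivative term, which is precisely the combination appearing in $U_\theta^{0,s}$. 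Then $\widehat\kappa_{n-s}^{(s)}$ from \eqref{kappaB} is bounded by the $\max\{\cdots\}$ in \eqref{BoundForUnRealA} by the same kind of elementary Gamma-ratio comparison, noting $n^2 - s^2 + s = \nu^2 + 2s\nu + s$ with $\nu = n-s$.
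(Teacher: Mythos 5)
Your proposal is correct and follows essentially the same route as the paper's own proof: the $m$ ordinary integrations by parts built on \eqref{RodriguesF}, the fractional representation of $\omega_m G_{n-m}^{(m)}$ via \eqref{FCI++}--\eqref{dFCI--} (the paper's identity \eqref{nmbds0100}), the Fubini transfer of ${}_{-1}I_x^{1-s}$ and ${}_xI_1^{1-s}$ onto $u^{(m)}$ as ${}_xI_\theta^{1-s}$ and ${}_\theta I_x^{1-s}$, the Stieltjes integration by parts under the BV hypothesis (with $g(-1)=h(1)=0$ for $m+s>1/2$), and then Theorem \ref{LBoundForGegPoly} for case (ii) and Theorem \ref{BoundGegB} for case (i). The one step you assert rather than prove---that the second term under the square root in \eqref{kappaNl} is dominated by the first---is precisely the paper's inequality \eqref{BetterBound}, proved there via the monotonicity of $\Gamma(z+1)/(\sqrt{z}\,\Gamma(z+1/2))$; and the weight factorization $\omega_s(x)=(1-x^2)^{s/2}\,\omega_{s/2}(x)$ that you momentarily doubted in case (i) is in fact exactly right.
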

\begin{proof}   Substituting
	$n\to n-k, \lambda\to k$ in   \eqref{RodriguesF}, leads to
		\begin{equation}\label{dFCI00}
		\begin{split}
		\omega_{k}&(x) G_{n-k}^{(k)}(x)=\, -\frac{1} {2k+1}\, \big\{ \omega_{k+1}(x) G_{n-k-1}^{(k+1)}(x) \big\} ',  \;\;\; n\ge k+1.
		\end{split}
		\end{equation}	
For $u,u',\cdots, u^{(m-1)}\in {\rm AC}(\Omega), $  using   \eqref{dFCI00} with $k=0, 1,\cdots, m-1,$ and the integration by parts  in Lemma \ref{IntByPartsInW11}, we obtain that  for  $n\ge m$,
	\begin{equation}\label{byparts0}
	\begin{split}
	\hat u_n^{C}&=\frac{2}{\pi}\int_{-1}^{1}u(x) G_n^{(0)}(x) \omega_{0}(x)\,  dx
	=-\frac{2}{\pi}\int_{-1}^{1}u(x) \big\{G_{n-1}^{(1)}(x) \omega_{1}(x) \big\}'\, dx\\
	&=\frac{2}{\pi}\int_{-1}^{1}u'(x)G_{n-1}^{(1)}(x) \omega_{1}(x) dx
	=-\frac{2}{3\pi} \int_{-1}^{1}u'(x)\big\{G_{n-2}^{(2)}(x) \omega_{2}(x)\big\}'\,dx
	\\
	&= \frac 1 3 \frac{2}{\pi} \int_{-1}^{1}u''(x)G_{n-2}^{(2)}(x) \omega_{2}(x) dx
	=-\frac 1 {3\cdot 5}  \frac{2}{\pi} \int_{-1}^{1}u''(x)\big\{G_{n-3}^{(3)}(x) \omega_{3}(x)\big\}'\,dx
	\\
	&= \cdots= \frac 1 {(2m-1)!!}  \frac 2\pi \int_{-1}^{1} u^{(m)}(x)\, G_{n-m}^{(m)}(x) \omega_{m}(x)\,dx.
	\end{split}
	\end{equation}
Using the  identity  (cf. \cite{Olver2010Book}):
	\begin{equation}\label{gammn12}
	\Gamma(k+1/2)=\frac{\sqrt \pi\, (2k-1)!!}{2^k},\quad k\in {\mathbb N}_0,
	\end{equation}
we can rewrite the expansion coefficient as
	\begin{equation}\label{IdentityForUn-1}
\begin{split}
\hat u_n^{C}&=\frac{1}{\sqrt{\pi}\,2^{m-1}\Gamma(m+1/2)}\int_{-1}^{1} u^{(m)}(x)\, G_{n-m}^{(m)}(x) \omega_{m}(x) \,dx.
\end{split}
\end{equation}

We proceed with the proof by fractional integration by parts. Then it is necessary to use the following identities:   for $m+s>1/2,$ and $n\ge m+s,$
\begin{equation}\label{nmbds0100}
\begin{split}
\omega_{m}(x) G_{n-m}^{(m)}(x)
=    &
- \frac{\Gamma(m+1/2)}{2^{s }\, \Gamma(m+s  +1/2)} \,  {}_{x}I_{1}^{1-s}  \big\{ \omega_{m+s }(x)\, {}^{r\!}G_{n-m-s  }^{(m+s )}(x) \big\}'\\
=&(-1)^{ n+[ n-s]} \frac{\Gamma(m+1/2)}{2^{s }\, \Gamma(m+s +1/2)} \,{}_{-1}I_{x}^{1-s}   \big\{ \omega_{m+s  }(x)\, {}^{l}G_{n-m-s  }^{(m+s)}(x)\big\}'.
\end{split}
\end{equation}
To derive \eqref{nmbds0100}, we substitute $s, \lambda, \nu$ in \eqref{FCI++}-\eqref{FCI--} by $1-s, m-s, n-m+s,$ respectively, leading to
\begin{equation}\label{nmbds001}
\begin{split}
\omega_{m}(x) G_{n-m}^{(m)}(x)
=    &
\frac{2^{1-s}\Gamma(m+1/2)}{\Gamma(m+s -1/2)} \, {}_{x}I_{1}^{1-s} \big\{  \omega_{m+s  -1}(x)\, {}^{r\!}G_{n-m-s+1}^{(m+s -1 )}(x) \big\}\\
=&(-1)^{n+[ n-s+1]}\frac{2^{1-s}\Gamma(m+1/2)}{\Gamma(m+s -1/2)} \, {}_{-1}I_{x}^{1-s} \big\{ \omega_{m+s  -1}(x)\, {}^{l}G_{n-m-s+1}^{(m+s -1 )}(x) \big\}.
\end{split}
\end{equation}
Taking  $s =1, \lambda=m+s $ and $\nu=n-m-s $ in  \eqref{dFCI++}-\eqref{dFCI--}, we obtain  that for $m+s>1/2$,
\begin{equation}\label{IdentityForUn-2}
\begin{split}
\omega_{m+s  -1}(x)\, {}^{r\!}G_{n-m-s +1}^{(m+s -1 )}(x)
=    & -
\frac{\Gamma(m+s-1/2)}{2\, \Gamma(m+s +1/2)} \, \big\{  \omega_{m+s }(x)\, {}^{r\!}G_{n-m-s  }^{(m+s)}(x) \big\}',\\
 \omega_{m+s  -1}(x)\, {}^{l}G_{n-m-s+1 }^{(m+s -1)}(x)
=    &
-\frac{\Gamma(m+s-1/2)}{2\,\Gamma(m+s +1/2)} \, \big\{  \omega_{m+s }(x)\, {}^{l}G_{n-m-s  }^{(m+s  )}(x) \big\}'.
\end{split}
\end{equation}
Substituting \eqref{IdentityForUn-2} into \eqref{nmbds001} leads to  \eqref{nmbds0100}.

For notational convenience, we denote
\begin{equation}\label{fghdefn}
f(x)=u^{(m)}(x),\quad  g(x)= (-1)^{ n+[ n-s]}\omega_{m+s }(x)\, {}^{l}G_{n-m-s  }^{(m+s)}(x),\quad h(x)=- \omega_{m+s }(x)\, {}^{r\!}G_{n-m-s}^{(m+s)}(x).
\end{equation}
By \eqref{nmbds0100},  we can rewrite  \eqref{IdentityForUn-1}  as
\begin{equation}\label{NewhatUn}
\begin{split}
\hat u_n^{C}&=\frac{1}{\sqrt{\pi}\, 2^{m-1}\Gamma(m+1/2)} \bigg\{\int_{-1}^{\theta}  u^{(m)}\, G_{n-m}^{(m)}\, \omega_{m}\,dx+ \int_{\theta}^{1}  u^{(m)}\, G_{n-m}^{(m)}\, \omega_{m}\, dx\bigg\}
\\
&= \frac{1}{\sqrt{\pi}\, 2^{m+s  -1}\Gamma(m+s +1/2)}  \bigg\{
\int^{\theta}_{-1} f(x)\, {}_{-1}I_{x}^{1-s} g'(x) \,dx + \int_{\theta}^{1} f(x)\, {}_{x}I_{1}^{1-s} h'(x)\,  dx \bigg\}.
\end{split}
\end{equation}
We find from  \eqref{obsvers}  and \eqref{IdentityForUn-2},  $g'(x)$ (resp. $h'(x)$)  is  continuous on $(-1,\theta] $ (resp. $[\theta, 1)$), and  they are also integrable when $m+s>1/2.$
Thus, for $f\in L^1(\Omega),$  changing the order of integration by the Fubini's Theorem, we derive from  \eqref{leftintRL}  that
\begin{equation}\label{FracIntPart}
\begin{split}
& \int^{\theta}_{-1}  f(x) {}_{-1}I_{x}^{1-s} g'(x)\,dx=\frac{1}{\Gamma(1-s)}\int_{-1}^\theta  \bigg\{\int _{-1}^x \frac{g'(y)}{(x-y)^s} dy\bigg\}  f(x) \, dx\\
&=\frac{1}{\Gamma(1-s)}\int_{-1}^\theta  \bigg\{\int _{y}^\theta \frac{f(x)}{(x-y)^s} dx\bigg\}  g'(y) \, dy=\frac{1}{\Gamma(1-s)}\int_{-1}^\theta  \bigg\{\int _{x}^\theta \frac{f(y)}{(y-x)^s} dy\bigg\}  g'(x) \, dx
\\
& =\int^{\theta}_{-1} g'(x)\, {}_{x}I_{\theta}^{1-s} f(x)\,dx.
\end{split}
\end{equation}
Similarly, we can show
\begin{equation}\label{fgdxds}
\int^{1}_{\theta} f(x)\, {}_{x}I_{1}^{1-s} h'(x)\,dx=  \int_{\theta}^{1} h'(x) \, {}_{\theta}I_{x}^{1-s} f(x)\,dx.
\end{equation}
Thus,  if ${}_{x}I_{\theta}^{1-s} f(x)\in {\rm BV}(\Omega_\theta^-)$ and $ {}_{\theta}I_{x}^{1-s} f(x) \in {\rm BV}(\Omega_\theta^+),$ we use
Lemma \ref{IntByPartsInW11}, and   derive
\begin{equation}\label{FracIntPart22}
\begin{split}
\int^{\theta}_{-1}   f(x) & \,{}_{-1}I_{x}^{1-s} g'(x)\,dx  =\int^{\theta}_{-1} g'(x)\, {}_{x}I_{\theta}^{1-s} f(x)\,dx\\
& = \big\{g(x)\, {}_{x}I_{\theta}^{1-s} f(x)\big\}\big|_{-1+}^{\theta-} - \int^{\theta}_{-1} g(x)\, ({}_{x}I_{\theta}^{1-s} f(x))'\,dx\\
&=  \big\{g(x)\, {}_{x}I_{\theta}^{1-s} f(x)\big\}\big|_{x=\theta-} + \int^{\theta}_{-1} g(x)\, {}_{x}^R D_{\theta}^{s} f(x)\,dx,
\end{split}
\end{equation}
where we used  the fact $g(-1)=0$ for $m+s>1/2$ due to \eqref{obsvers},   and also used  \eqref{left-fra-der-rl}.

Similarly, we can show that for $m+s>1/2,$
\begin{equation}\label{fgdxds22}
\int^{1}_{\theta} f(x)\, {}_{x}I_{1}^{1-s} h'(x)\,dx=- \big\{h(x)\, {}_{\theta}I_{x}^{1-s} f(x)\big\}\big|_{x=\theta+} - \int_{\theta}^{1} h(x)\, {}_{\theta}^R D_{x}^{s}\, f(x)\,dx.
\end{equation}
Substituting \eqref{fghdefn} and \eqref{FracIntPart22}-\eqref{fgdxds22} into \eqref{NewhatUn},  we obtain \eqref{HatUnCaseC}.

\vskip 4pt
We next derive  the bounds in \eqref{BoundForUnRealA}-\eqref{BoundForUnRealB}.

(i) For $m=0$ and $s\in (1/2,1)$, we   take  $\lambda=s$ and $\nu=n-s$ in Theorem \ref{BoundGegB},
and then obtain from  \eqref{HatUnCaseC}  and the bound \eqref{BoundForUnRealA} directly.

\vskip 3pt 	
		(ii) We now turn to the proof of \eqref{BoundForUnRealB}.
We first  show  the inequality:
				\begin{equation}\label{BetterBound}
				\begin{split}
				 \frac{2}{\sqrt{2\lambda-1+\nu(\nu+2\lambda)}}\frac{\Gamma({\nu}/2+1)}{\Gamma({\nu}/2+\lambda)}
				\le \frac{ \Gamma( ({\nu}+1)/ 2)}{\Gamma( ({\nu}+1)/2+\lambda)},\;\;\; \nu\ge 0,\;\; \lambda\ge 1.
				\end{split}	\end{equation}	
				To prove \eqref{BetterBound}, we use the property in  \cite[Corollary 2]{Bustoz1986MC}, that is, the ratio
				$$f(z):=\frac{1}{\sqrt z}\frac{\Gamma(z+1)}{\Gamma(z+1/2)},\;\;\; z>0,$$
				is decreasing. Then using the facts:
				$$ (\nu-1)/2+\lambda>0,\;\;\; (\nu-1)/2+\lambda> (\nu+1)/2,$$
we can derive
				\begin{equation}\label{BoundForUnReal-1}
				\begin{split}
				\frac 1 {\sqrt{(\nu-1)/2+\lambda}}\frac{\Gamma( ({\nu}+1)/2+\lambda)}{\Gamma( {\nu}/2+\lambda)}\le&\frac 1 {\sqrt{\nu/2+1/2}}\frac{\Gamma( ({\nu}+3)/2)}{\Gamma( {\nu}/2+1)}
				=\sqrt{\frac {\nu+1} 2}\frac{\Gamma( ({\nu}+1)/2)}{\Gamma( {\nu}/2+1)},
				\end{split}\end{equation}
				where in the last step, we used the identity: $\Gamma(z+1)=z\Gamma(z).$	
Next, we rewrite  \eqref{BoundForUnReal-1} as
				\begin{equation}\label{BoundForUnReal-2}
				\begin{split}	
\frac{2}{\sqrt{(\nu+2\lambda-1)   (\nu+1)}}
\frac{\Gamma( {\nu}/2+1)}{\Gamma( {\nu}/2+\lambda)}\le\frac{\Gamma( ({\nu}+1)/2)}{\Gamma( ({\nu}+1)/2+\lambda)}.
				\end{split}	\end{equation}
				Noting that
				\begin{equation*}
				\frac{2}{\sqrt{2\lambda-1+\nu(\nu+2\lambda)}}=\frac{2}{\sqrt{(\nu+2\lambda-1)   (\nu+1)}},
				\end{equation*}	
				  we  obtain \eqref{BetterBound} from \eqref{BoundForUnReal-2} immediately.
Thanks to \eqref{BetterBound},  we   derive from Theorem \ref{LBoundForGegPoly}  that
				\begin{equation}\label{NewKappaB}
				\begin{split}
					\max_{|x|\le 1}\big\{	\omega_{\lambda}(x) \big|{}^{r\!}G_\nu^{(\lambda)}(x)\big|,	\; \omega_{\lambda}(x)\big| {}^{l}G_\nu^{(\lambda)}(x)\big| \big\}
				\le &\frac{\Gamma(\lambda+ 1/2)}{\sqrt \pi}
				\frac{ \Gamma( ({\nu}+1)/ 2)}{\Gamma( ({\nu}+1)/2+\lambda)} ,
				\end{split}	\end{equation}
so the bound in \eqref{BoundForUnRealB} follows from \eqref{HatUnCaseC} with  $\lambda=m+s $ and $ \nu=n-m-s$ in \eqref{NewKappaB}.
\end{proof}

\subsection{$L^\infty$- and $L^2$-estimates of Chebyshev expansions}   With Theorem \ref{IdentityForUn} at our disposal, we can analyze all related orthogonal projections, interpolations and quadratures (cf. \!\cite{Majidian2017ANM}).
Here, we first estimate the Chebyshev expansion errors  in the  $L^\infty$-norm and $L^2_\omega$-norm for functions with interior singularities.  We remark that if the function is sufficiently  smooth, we understand the results with $s=1$, i.e.,  in the space ${\mathbb  W}^{m+1}(\Omega)$ defined in   \eqref{intergerASobB}. We refer to Theorem \ref{TruncatedChebyshev} for the integer case.
\begin{thm}\label{TruncatedChebyshevLinf} 	Given $\theta\in (-1,1),$ if  $u\in {\mathbb  W}^{m+s}_{\theta}(\Omega) $ with
	$s\in (0,1)$ and integer  $m\ge 0$, we have the following estimates.
	\begin{itemize}
		\item[(i)] For $1< m+s\le N+1,$
			\begin{equation}\label{FracLinfA}
		\begin{split}
		\|u-\pi_N^C u\|_{L^\infty (\Omega)}\le &\frac {U_\theta^{m,s} }{2^{m+s-2} (m+s-1)\pi}\frac{ \Gamma( ({N-m-s})/2+1)}{\Gamma( ({N+m+s})/2)}.
		\end{split}
		\end{equation}
		\item[(ii)] For $ 1/2<m+s< N+1,$
			\begin{equation}\label{FracL2B}
		\begin{split}
		\|u-\pi_N^C u\|_{L^2_{\omega}(\Omega)}\le\bigg\{\frac {2^3}{(2m+2s-1)\pi}
		\frac{ \Gamma( {N-m-s+1})}{\Gamma( {N+m+s})}\bigg\}^{1/2}U_\theta^{m,s}.
		\end{split}
		\end{equation}
	\end{itemize}
\end{thm}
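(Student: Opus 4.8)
The plan is to expand the error into its Chebyshev tail, $u-\pi_N^C u=\sum_{n=N+1}^{\infty}\hat u_n^{C}\,T_n$ (cf. \eqref{Cbexp}--\eqref{ancoef}), insert the sharp coefficient estimates \eqref{BoundForUnRealA}--\eqref{BoundForUnRealB} of Theorem \ref{IdentityForUn}, and then carry out two elementary telescoping summations of ratios of Gamma functions. Since $u\in L^2_\omega(\Omega)$ by the standing assumption in \eqref{Cbexp}, the tail representation holds in $L^2_\omega(\Omega)$; and when $m\ge 1$ the bound \eqref{BoundForUnRealB} decays like $n^{-(m+s)}$ with $m+s>1$, so the tail converges absolutely and uniformly to a continuous function, which equals $u$ (here $u\in C(\bar\Omega)$ since $u,u',\dots,u^{(m-1)}\in{\rm AC}(\Omega)$). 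Write $\mu:=m+s$; throughout, $n\ge N+1\ge\mu$ (part (i)) resp. $n\ge N+1>\mu$ (part (ii)), which both matches the hypotheses of Theorem \ref{IdentityForUn} and keeps all Gamma arguments below strictly positive.

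For part (i), note that $m\in{\mathbb N}_0$ and $s\in(0,1)$ force $m\ge 1$ (hence $\mu>1$) as soon as $m+s\ge 1$. Using $|T_n(x)|\le 1$ (cf. \eqref{Chebydefn0}) and \eqref{BoundForUnRealB},
\[
\|u-\pi_N^C u\|_{L^\infty(\Omega)}\le\sum_{n=N+1}^{\infty}|\hat u_n^{C}|\le\frac{U_\theta^{m,s}}{2^{\mu-1}\pi}\sum_{n=N+1}^{\infty}\frac{\Gamma((n-\mu+1)/2)}{\Gamma((n+\mu+1)/2)}.
\]
To evaluate the last sum, set $c_n:=\Gamma((n-\mu+1)/2)/\Gamma((n+\mu-1)/2)$; using $\Gamma(z+1)=z\Gamma(z)$ one verifies $c_n-c_{n+2}=(\mu-1)\,\Gamma((n-\mu+1)/2)/\Gamma((n+\mu+1)/2)$. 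Summing the even- and odd-indexed subsequences separately (and noting $c_n\to0$ since $\mu>1$) gives $\sum_{n=N+1}^{\infty}\Gamma((n-\mu+1)/2)/\Gamma((n+\mu+1)/2)=(c_{N+1}+c_{N+2})/(\mu-1)$. Because $x\mapsto\Gamma(x+1/2)/\Gamma(x)$ is increasing (e.g. \cite{Bustoz1986MC}), $c_n$ is decreasing, so $c_{N+1}+c_{N+2}\le 2c_{N+1}=2\,\Gamma((N-\mu)/2+1)/\Gamma((N+\mu)/2)$; substituting and using $2/2^{\mu-1}=2^{2-\mu}$ yields \eqref{FracLinfA}.

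For part (ii), the orthogonality of Chebyshev polynomials (cf. \eqref{genorth} with $\lambda=0$) gives $\int_{-1}^{1}T_nT_m\,\omega\,dx=\tfrac{\pi}{2}\delta_{nm}$ for $n,m\ge1$, hence $\|u-\pi_N^C u\|_{L^2_\omega(\Omega)}^2=\tfrac{\pi}{2}\sum_{n=N+1}^{\infty}|\hat u_n^{C}|^2$. Inserting \eqref{BoundForUnRealB} (when $m\ge1$) or \eqref{BoundForUnRealA} (when $m=0$, $s\in(1/2,1)$), it remains to bound $\sum_{n>N}b_n^2$, where $b_n$ equals $\Gamma((n-\mu+1)/2)/\Gamma((n+\mu+1)/2)$, resp. the maximum appearing in \eqref{BoundForUnRealA}. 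The crucial step is to replace each squared half-integer-step Gamma ratio by a single-integer-step one: using the monotonicity of $x\mapsto\Gamma(x+1/2)/\Gamma(x)$ (which shows the relevant Gamma ratios $\Gamma((n\mp\mu+c)/2)/\Gamma((n\pm\mu+c)/2)$ are monotone in $n$) together with the Legendre duplication formula, one collapses a product of two consecutive such ratios to the clean form $C_\mu\,\Gamma(n-\mu)/\Gamma(n+\mu)$ with an explicit constant $C_\mu$; in the $m=0$ case the two entries of the maximum are treated separately, the quadratic prefactor $(n^2-s^2+s)^{-1}$ of the second entry being absorbed via $n^2-s^2+s\ge(n-s)(n+s)$ and $s>1/2$. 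Finally $\sum_{n>N}\Gamma(n-\mu)/\Gamma(n+\mu)$ telescopes: with $q_n:=\Gamma(n-\mu)/\Gamma(n+\mu-1)$ one checks $q_n-q_{n+1}=(2\mu-1)\,\Gamma(n-\mu)/\Gamma(n+\mu)$ and $q_n\to0$ (as $\mu>1/2$), so the sum equals $q_{N+1}/(2\mu-1)=\Gamma(N-\mu+1)/\bigl((2\mu-1)\Gamma(N+\mu)\bigr)$. Collecting the constants gives \eqref{FracL2B}.

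I expect the main obstacle to be the reduction step in part (ii): producing a uniform comparison of the form $b_n^2\le C_\mu\,\Gamma(n-\mu)/\Gamma(n+\mu)$ — reducing half-integer shifts of Gamma arguments to integer shifts via duplication, verifying the needed monotonicity of the Gamma ratios, and, in the borderline case $m=0$, simultaneously controlling both entries of the maximum in \eqref{BoundForUnRealA} — all while keeping the Gamma arguments off their poles, which is exactly where the conditions $m+s\le N+1$ (part (i)) and $m+s<N+1$ (part (ii)) enter. The two telescoping identities themselves are routine once the auxiliary sequences $c_n$ and $q_n$ are in hand.
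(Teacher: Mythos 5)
Your proposal is correct and follows essentially the same route as the paper: bound the tail by the coefficient estimates \eqref{BoundForUnRealA}--\eqref{BoundForUnRealB}, telescope the Gamma ratios (your $c_n$ and $q_n$ are exactly the paper's ${\mathcal T}_n^\sigma$ and the differences in \eqref{FracL2-4}), use monotonicity of Gamma ratios to combine the two leading terms, and for the $L^2_\omega$ bound use Parseval plus the duplication formula to collapse $({\mathcal S}_n^\sigma)^2\le {\mathcal S}_n^\sigma{\mathcal S}_{n-1}^\sigma=2^{2\sigma}\Gamma(n-\sigma)/\Gamma(n+\sigma)$. The only (harmless) deviation is your treatment of the second entry of the maximum in the $m=0$ case, where you absorb $(n^2-s^2+s)^{-1}$ via $n^2-s^2+s\ge (n-s)(n+s)$ rather than the paper's chain in \eqref{FracL2-6}; this variant indeed closes, since it reduces to the increasingness of $\Gamma(z+1)/\Gamma(z+1/2)$, and it yields the same telescoping bound.
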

\begin{proof}  We first prove \eqref{FracLinfA}.  For simplicity,  we  denote
\begin{equation}\label{Snsigma}
{\mathcal S}_n^\sigma:=\frac{ \Gamma( ({n-\sigma}+1)/ 2)}{\Gamma( ({n+\sigma}+1)/2)},\quad
{\mathcal T}_n^\sigma:=\frac{ \Gamma( ({n-\sigma}+1)/ 2)}{\Gamma( ({n+\sigma}-1)/2)} ,\quad \sigma:=m+s.
\end{equation}
A direct calculation leads to the identity:
\begin{equation}\begin{split}\label{FracLinf-1}
{\mathcal T}_n^\sigma-{\mathcal T}_{n+2}^\sigma=&
\frac{{n+\sigma}-1} 2\frac{ \Gamma( ({n-\sigma}+1)/ 2)}{\Gamma( ({n+\sigma}+1)/2)}-\frac{{n-\sigma}+1} 2\frac{ \Gamma( ({n-\sigma}+1)/ 2)}{\Gamma( ({n+\sigma}+1)/2)}\\
=&(\sigma-1)\frac{ \Gamma( ({n-\sigma}+1)/ 2)}{\Gamma( ({n+\sigma}+1)/2)}= (\sigma-1){\mathcal S}_n^\sigma,
\end{split}\end{equation}
where we used the identity $z\Gamma(z)=\Gamma(z+1)$.
By \eqref{BoundForUnRealB},
\begin{equation}\begin{split}\label{FracLinf-3}
\big|u(x) -\pi_N^C u(x)\big|&\le \sum_{n=N+1}^{\infty}|\hat u_n^{C}| \le
\frac{U_\theta^{m,s}}{2^{\sigma-1} \pi}\sum_{n=N+1}^{\infty}
{\mathcal S}_n^\sigma
= \frac{U_\theta^{m,s}}{2^{\sigma-1}  (\sigma-1)\pi}  \sum_{n=N+1}^{\infty}\big\{{\mathcal T}_n^\sigma-{\mathcal T}_{n+2}^\sigma\big\}
\\&=\frac{U_\theta^{m,s}}{2^{\sigma-1}  (\sigma-1)\pi} \bigg\{{\mathcal T}_{N+1}^\sigma+{\mathcal T}_{N+2}^\sigma
+
\sum_{n=N+3}^{\infty}{\mathcal T}_n^\sigma-
\sum_{n=N+1}^{\infty}{\mathcal T}_{n+2}^\sigma\bigg\}\\&=\frac{U_\theta^{m,s}}{2^{\sigma-1}  (\sigma-1)\pi} \big\{{\mathcal T}_{N+1}^\sigma+{\mathcal T}_{N+2}^\sigma\big\}.
	\end{split}\end{equation}
 We find  from
	\cite[(1.1) and Theorem 10]{Alzer1997MC} that for $0\le a\le b$, the ratio
	\begin{equation}\label{gammratio}
	R_b^a(z):=\frac{\Gamma(z + a)}{\Gamma(z + b)},\quad z\ge 0,
	\end{equation}
	is decreasing with respect to $z.$ As $\sigma-1> 0,$ we have
	\begin{equation}\label{Tsigmabnd}
{\mathcal T}_{N+2}^\sigma=R_{\sigma-1}^0(1+(N-\sigma+1)/2) \le R_{\sigma-1}^0(1+(N-\sigma)/2)={\mathcal T}_{N+1}^\sigma.
	\end{equation}
Therefore, the estimate \eqref{FracLinfA}  follows from \eqref{FracLinf-3} and \eqref{Tsigmabnd}.

We now turn to the estimate \eqref{FracL2B} with $1<  m+s\le N+1$. Similar to   \eqref{Tsigmabnd}, we can use \eqref{gammratio} to show that ${\mathcal S}_n^\sigma\le
{\mathcal S}_{n-1}^\sigma.$ Thus, using the identity
\begin{equation}\begin{split}\label{DuplicationFormula}
	\Gamma(2z)=\pi^{-1/2}2^{2z-1}\Gamma(z)\Gamma(z+1/2),
	\end{split}\end{equation}
	we derive
	 \begin{equation}\begin{split}\label{FracL2-4}
({\mathcal S}_n^\sigma)^2 &\le {\mathcal S}_n^\sigma {\mathcal S}_{n-1}^\sigma = \frac{ \Gamma( ({n-\sigma}+1)/ 2)}{\Gamma( ({n+\sigma}+1)/2)}\frac{ \Gamma( ({n-\sigma})/ 2)}{\Gamma( ({n+\sigma})/2)} =2^{2\sigma}\frac{ \Gamma( {n-\sigma})}{\Gamma( {n+\sigma})}\\
&=\frac{2^{2\sigma}}{2\sigma-1}
\bigg(\frac{ \Gamma( {n-\sigma})}{\Gamma( {n-1+\sigma})}-\frac{ \Gamma( {n+1-\sigma})}{\Gamma( {n+\sigma})}\bigg).
\end{split}\end{equation}
Then, for $\sigma> 1$,
\begin{equation}\label{FracL2-5}
\begin{split}
\big\|u-\pi_N^{C} u\big\|_{L^2_{\omega}(\Omega)}^2=&\frac{\pi}{2}\sum_{n=N+1}^{\infty}\big|\hat u_n^{C}\big|^2
\le   \frac { (U_\theta^{m,s})^2}{2^{2\sigma-3}\pi}\sum_{n=N+1}^{\infty}	
({\mathcal S}_n^\sigma)^2 \le\frac {2^3 (U_\theta^{m,s})^2}{(2\sigma-1)\pi}
\frac{ \Gamma( {N-\sigma+1})}{\Gamma( {N+\sigma})}.
\end{split}
\end{equation}

Finally, we prove \eqref{FracL2B} with $m=0$ and $s\in (1/2,1)$ by using  \eqref{BoundForUnRealA}.  Note that
 \eqref{FracL2-5} is valid for $m=0,$ so we have
 \begin{equation}\label{FracL2-500A}
\begin{split}	
({\mathcal S}_n^s)^2=\frac{\Gamma^2((n-s)/2+1)}{\Gamma^2((n+s)/2)}
& \le\frac{2^{2s}}{2s-1}
\bigg(\frac{ \Gamma( {n-s})}{\Gamma( {n-1+s})}-\frac{ \Gamma( {n+1-s})}{\Gamma( {n+s})}\bigg).
\end{split}
\end{equation}
 For the second factor in the upper bound \eqref{BoundForUnRealA},  we also use  \eqref{gammratio} and \eqref{DuplicationFormula}-\eqref{FracL2-4} to show
\begin{equation}\label{FracL2-6}
\begin{split}
\frac{4}{n^2-s^2+s}&\frac{\Gamma^2((n-s)/2+1)}{\Gamma^2((n+s)/2)}
\le \frac{4}{n^2-s^2+s}\frac{\Gamma((n-s)/2+1)}{\Gamma((n+s)/2)}
\frac{\Gamma((n-s)/2+1/2)}{\Gamma((n+s)/2-1/2)}\\
&= \frac{2^{2s}}{n^2-s^2+s}\frac{\Gamma(n-s+1)}{\Gamma(n+s-1)}
=2^{2s}\frac{n^2-s^2+s-n}{n^2-s^2+s}\frac{\Gamma(n-s)}{\Gamma(n+s)}\\
& \le 2^{2s}\frac{\Gamma(n-s)}{\Gamma(n+s)}
=\frac{2^{2s}}{2s-1}
\bigg(\frac{ \Gamma( {n-s})}{\Gamma( {n-1+s})}-\frac{ \Gamma( {n+1-s})}{\Gamma( {n+s})}\bigg).
	\end{split}
	\end{equation}
	Thus, from \eqref{BoundForUnRealA}, we obtain
\begin{equation}\label{BoundForUnRealA00}
\begin{split}
|\hat u_n^{C}|^2\le & \frac{4 (U_\theta^{0,s})^2}{(2s-1) \pi^2} \bigg(\frac{ \Gamma( {n-s})}{\Gamma( {n-1+s})}-\frac{ \Gamma( {n+1-s})}{\Gamma( {n+s})}\bigg).
\end{split}
\end{equation}
With this, we derive
\begin{equation}\label{FracL2-500}
\begin{split}
\big\|u-\pi_N^{C} u\big\|_{L^2_{\omega}(\Omega)}^2&=\frac{\pi}{2}\sum_{n=N+1}^{\infty}\big|\hat u_n^{C}\big|^2
\le\frac {2^3 (U_\theta^{0,s})^2}{(2s-1)\pi}
\frac{ \Gamma( {N-s+1})}{\Gamma( {N+s})}.
\end{split}
\end{equation}
This completes the proof.
\end{proof}
\begin{rem}\label{expbounds} {\em Recall that {\rm(cf.}  \cite[(5.11.13)]{Olver2010Book}{\rm):} for $a<b$,
\begin{equation}\label{gamratioA}
\frac{\Gamma(z+a)}{\Gamma(z+b)}=z^{a-b}+\frac{1}{2}(a-b)(a+b-1)z^{a-b-1}+O(z^{a-b-2}),\quad z\gg 1.
\end{equation}	
Thus,  under the conditions of Theorem \ref{TruncatedChebyshevLinf} and for fixed $m$ and large $n$ or $N,$ we have
\begin{equation}\label{newbndadd}
\begin{split}
&|\hat u_n^C|\le C n^{-(m+s)} U_\theta^{m,s}\, ,\qquad  \|u-\pi_N^C u\|_{L^\infty (\Omega)}\le CN^{1-(m+s)} U_\theta^{m,s}\, ,\\[4pt]
& \|u-\pi_N^C u\|_{L^2_\omega (\Omega)}\le CN^{\frac 1 2-(m+s)} U_\theta^{m,s}\,,
\end{split}
\end{equation}
where $C$ is a positive constant independent of $n, N$ and $u.$ }
\end{rem}


\subsection{Applications to functions with interior singularities}\label{Iusexample} In what follows, we apply the main results to  two typical types of singular functions, and provide numerical illustrations of the optimal convergence order.
\begin{itemize}
\item{\sc Type-I}:\,  Consider
\begin{equation}\label{uthetafun}
u(x)=|x-\theta|^\alpha,\quad  \alpha>-1/2,\;\;\; x, \theta\in(-1,1),
\end{equation}
where   $\alpha$ is not an even integer.
\vskip 4pt
\item{\sc Type-II}:\,  Consider
\begin{equation}\label{uthetafun2}
u(x)=|x-\theta|^\alpha \ln |x-\theta|,\quad  \alpha>-1/2,\;\;\; x, \theta\in(-1,1).
\end{equation}
\end{itemize}
\subsubsection{{\sc Type-I:}\, $u(x)=|x-\theta|^\alpha$ in  \eqref{uthetafun}} 

\begin{thm}\label{Wspacproof} Given  the function in \eqref{uthetafun},  we have that {\rm (i)} if $\alpha$ is an odd integer, then $ u\in {\mathbb W}^{\alpha+1}(\Omega)$ {\rm (defined in \eqref{intergerASobB})};  and {\rm (ii)} if $\alpha$ is not an integer, then $ u\in {\mathbb W}^{\alpha+1}_\theta(\Omega)$ {\rm (}defined in \eqref{fracA}{\rm)}.

Its Chebyshev expansion coefficients can be expressed as
\begin{equation}\label{SpecialCase}
	\begin{split}
	\hat u_n^{C} =&\frac{\Gamma(\alpha+1)}{2^{\alpha}\Gamma(\alpha+3/2)\sqrt{\pi}}\big\{{}^{r\!}G_{n-\alpha-1}^{(\alpha+1)}(\theta)
-(-1)^{n+[n-\alpha]}  \,{}^{l}G_{n-\alpha-1}^{(\alpha+1)}(\theta) \big\} \omega_{\alpha+1}(\theta),
	\end{split}
	\end{equation}
		for all  $n\ge \alpha+1.$ Moreover, we have the  bounds uniform for $n:$
\begin{itemize}
	\item[(a)] for $-1/2<\alpha<0$,
	\begin{equation}\label{CaseOne-0}
	\begin{split}
|	\hat u_n^{C}| \le &\frac{\Gamma(\alpha+1)}{2^{\alpha-1}\pi}(1-\theta^2)^{\alpha/2}
	\max\bigg\{
	\frac{ \Gamma((n-\alpha)/2) }{\Gamma((n+\alpha)/2+1)},\frac{2}{\sqrt{n^2-\alpha(\alpha+1)}}\frac{\Gamma((n-\alpha+1)/2)}
{\Gamma((n+\alpha+1)/2)}\bigg\};
	\end{split}
	\end{equation}
	\item[(b)] for $\alpha \ge 0$,
	\begin{equation}\label{CaseTwo-0}
	\begin{split}
	|\hat u_n^{C}| \le &\frac{\Gamma(\alpha+1)}{2^{\alpha-1}\pi}\frac{ \Gamma((n-\alpha)/2) }{\Gamma((n+\alpha)/2+1)}.
	\end{split}
	\end{equation}
\end{itemize}
\end{thm}
\begin{proof} (i) If $\alpha$ is an odd integer, we find
\begin{equation}\label{mderivaR}
\begin{split}
 & u^{(k)} =  d_\alpha^k\,   |x-\theta|^{\alpha-k}\, ({\rm sgn }(x-\theta))^k  \in  {\rm AC}(\Omega),\quad 0\le k\le \alpha-1; \quad  \\
 &  u^{(\alpha)}=  d_\alpha^\alpha\,  (2H(x-\theta)-1)\in {\rm BV}(\Omega).
\end{split}
\end{equation}
where ${\rm sign} (z), H(z), \delta(z) $ are the sign,  Heaviside  and Dirac Delta functions, respectively, and
\begin{equation}\label{dkalpha}
d_\alpha^k:=\alpha (\alpha-1) \cdots (\alpha-k+1)=\frac{\Gamma(\alpha+1)}{\Gamma(\alpha-k+1)}.
\end{equation}
Thus, from  \eqref{intergerASobB}, we claim   $ u\in {\mathbb W}^{\alpha+1}(\Omega). $ Moreover, by \eqref{HatUnCaseA} (with $m=\alpha$),   \eqref{gammn12} and
   \eqref{mderivaR},
\begin{equation*}\label{HatUnCaseA007}
	\begin{split}
	\hat u_n^{C}&= \frac 1 {(2\alpha+1)!!}  \frac 2\pi \int_{-1}^{1} \, G_{n-\alpha-1}^{(\alpha+1)}(x) \omega_{\alpha+1}(x) \,d u^{(\alpha)}(x)\\& =\frac{\Gamma(\alpha+1)}{2^{\alpha-1}\Gamma(\alpha+3/2)\sqrt{\pi}} G_{n-\alpha-1}^{(\alpha+1)}(\theta) \omega_{\alpha+1}(\theta),
	\end{split}
	\end{equation*}
which is identical to \eqref{SpecialCase} with $\alpha$ being an odd integer, thanks to \eqref{obsvers0}.

(ii) If $\alpha$ is not an integer,  let
$m=[\alpha]+1 $ and  $s=\{\alpha+1\}\in (0,1). $ Like \eqref{mderivaR}, we have $u, \cdots, u^{(m-1)}\in {\rm AC}(\Omega).$  By a direct calculation, we infer from \eqref{intformu} that
for $ x\in (-1,\theta),$
\begin{equation}\label{mderivaR00}
\begin{split}
 &{}_{x}I_{\theta}^{1-s} u^{(m)}=(-1)^m\,d_\alpha^m \, {}_{x}I_{\theta}^{m-\alpha} (\theta-x)^{\alpha-m}=(-1)^m\,d_\alpha^m\, \Gamma(s)=( -1)^{[\alpha]+1}\, \Gamma(\alpha+1),\;\;\;\\
\end{split}
\end{equation}
while for $x\in (\theta,1),$
\begin{equation}\label{mderivaR00B}
\begin{split}
 & {}_{\theta}I_{x}^{1-s} u^{(m)}=d_\alpha^m \, {}_{\theta}I_{x}^{m-\alpha} (x-\theta)^{\alpha-m}=d_\alpha^m\, \Gamma(s)=\Gamma(\alpha+1).
\end{split}
\end{equation}
Therefore, by the definition \eqref{fracA},  we have $u\in {\mathbb W}^{\alpha+1}_\theta(\Omega).$

 It is clear that by \eqref{mderivaR00}-\eqref{mderivaR00B},
${}_{x}^R D_{\theta}^{s} \, u^{(m)}(x)={}_{\theta}^R D_{x}^{s} \, u^{(m)}(x)=0,$
so we can derive the exact formula \eqref{SpecialCase} by using \eqref{HatUnCaseC} straightforwardly.

	(a) For $-1/2<\alpha<0$, taking $s=\alpha+1$ in \eqref{BoundForUnRealA}, leads to 		
	\begin{equation*}
	\begin{split}
	|\hat u_n^{C}| \le &\frac{\Gamma(\alpha+1)}{2^{\alpha-1}\pi}(1-\theta^2)^{\alpha/2}
	\max\bigg\{
	\frac{ \Gamma((n-\alpha)/2) }{\Gamma((n+\alpha)/2+1)},\frac{2}{\sqrt{n^2-\alpha(\alpha+1)}}\frac{\Gamma((n-\alpha+1)/2)}{\Gamma((n+\alpha+1)/2)}\bigg\},
	\end{split}
	\end{equation*}	
where we used the fact $U_{0,\alpha+1}=2(1-\theta^2)^{\alpha/2}\Gamma(\alpha+1).$

	(b) Similarly, we can obtain \eqref{CaseTwo-0} directly from \eqref{BoundForUnRealB}.
\end{proof}


\begin{rem}\label{symformA} {\em As a special case of
 \eqref{SpecialCase} with $\theta=0$,  we obtain from \eqref{obsvers} and \eqref{LBoundGeg-5} that
 the Chebyshev expansion coefficients of $|x|^\alpha$ have the exact representation for each integer $n\ge 0$,
	\begin{equation}\label{SpecialCasetheta=0}
\begin{split}
\hat u_n^{C}
=&\big((-1)^{n}+1\big)\frac{\Gamma(\alpha+1)\Gamma( {(n-\alpha)}/2)}{2^{\alpha}\pi \Gamma( (n+\alpha)/2+1)} \sin \Big(\frac{(n-\alpha) \pi} 2\Big),
\end{split}
\end{equation}
which implies that for integer $k\ge 0,$
\begin{equation}\label{SpecialCasetheta=01}
\begin{split}
\hat u_{2k+1}^C=0,\quad \hat u_{2k}^C=(-1)^{k}\sin \frac{\alpha \pi} 2
\frac{\Gamma(\alpha+1)} {2^{\alpha-1}\pi}\frac{\Gamma(k-\alpha/2)} {\Gamma(k+\alpha/2+1)}.
\end{split}
\end{equation}
It is noteworthy that the following asymptotic estimate for large $k$ was obtained in   \cite[Sec. 3.11]{Olver1974Book}:
\begin{equation}\label{unCest}
	\hat u_{2k}^{C}\simeq (-1)^{k}\sin \frac{\alpha \pi} 2
\frac{\Gamma(\alpha+1)} {2^{\alpha-1}\pi} k^{-\alpha-1},
\end{equation}
but by different means.
Indeed, our approach leads to exact representations for all $n$. }
\end{rem}


Note that we can directly apply Theorem \ref{TruncatedChebyshevLinf} (also see Remark \ref{expbounds}) to bound the errors of the Chebyshev expansion of the above type of singular functions. For example, if $\alpha$ is not an integer, we know $ u\in {\mathbb W}^{\alpha+1}_\theta(\Omega),$ so we have
\begin{equation}\label{newbndaddbus}
\begin{split}
 \|u-\pi_N^C u\|_{L^\infty (\Omega)}\le CN^{-\alpha}\,  ,\quad  \|u-\pi_N^C u\|_{L^2_\omega (\Omega)}\le CN^{-\alpha-1/2}\,.
\end{split}
\end{equation}
We tabulate in Table \ref{LinfSeta} the errors and convergence order of Chebyshev approximations to  $u(x)=|x-\theta|^\alpha$ with various $\alpha$ and $\theta=0,1/2.$
\begin{table}[!htbp]
	\centering
	\caption{Convergence order of  $u=|x-\theta|^\alpha$ with $\theta=0,1/2$.} 
	\small
	\begin{tabular}{|c|c|c|c|c|c|c|c|c|c|c|c|c|}
		\hline
		\multicolumn{1}{ |c  }{\multirow{2}{*}{$N$} } &
		\multicolumn{6}{ |c| }{$u=|x|^\alpha$\;  (error in $L^\infty$-norm)} & \multicolumn{6}{ |c| }{$u=|x-1/2|^\alpha$ \;  (error in $L^\infty$-norm)}      \\ \cline{2-13}
		\multicolumn{1}{ |c  }{}                        &
		\multicolumn{1}{ |c| }{$\alpha=0.1$}  & order
		&$ \alpha=1.2$  & order  & $\alpha=2.6$  & order  & $\alpha=0.1$  & order
		&$ \alpha=1.2$  & order & $\alpha=2.6$  & order \\ \hline
		$2^5$ &6.68e-1 & -- & 8.37e-3 & -- & 8.32e-5 & -- &6.60e-1 & -- & 7.31e-3 & -- & 6.18e-5 & --  \\
		$2^6$ &6.24e-1 & 0.10 & 3.71e-3 & 1.17 & 1.42e-5 & 2.55 &6.16e-1 & 0.10 & 3.15e-3 & 1.21 & 1.00e-5 & 2.63  \\
		$2^7$ &5.83e-1 & 0.10 & 1.63e-3 & 1.19 & 2.40e-6 & 2.57 &5.75e-1 & 0.10 & 1.38e-3 & 1.19 & 1.68e-6 & 2.57 \\
		$2^8$ &5.44e-1 & 0.10 & 7.13e-4 & 1.19 & 3.99e-7 & 2.59  &5.36e-1 & 0.10 & 6.01e-4 & 1.20 & 2.76e-7 & 2.61 \\
		$2^9$ &5.08e-1 & 0.10 & 3.11e-4 & 1.20 & 6.62e-8 & 2.59 &5.00e-1 & 0.10 & 2.62e-4 & 1.20 & 4.58e-8 & 2.59 \\
		$2^{10}$ &4.74e-1 & 0.10 & 1.36e-4 & 1.20 & 1.09e-8 & 2.60  &4.67e-1 & 0.10 & 1.14e-4 & 1.20 & 7.54e-9 & 2.60\\
		\hline
		\hline
		\multicolumn{1}{ |c  }{\multirow{2}{*}{$N$} } &
		\multicolumn{6}{ |c| }{$u=|x|^\alpha$ \;  (error in $L^2_\omega$-norm) } & \multicolumn{6}{ |c| }{$u=|x-1/2|^\alpha$ \;  (error in $L^2_\omega$-norm)}      \\ \cline{2-13}
		\multicolumn{1}{ |c  }{}                        &
		\multicolumn{1}{ |c| }{$\alpha=0.1$}  & order
		&$ \alpha=1.2$  & order & $\alpha=2.6$  & order  & $\alpha=0.1$  & order
		&$ \alpha=1.2$  & order & $\alpha=2.6$  & order \\ \hline
		$2^5$  & 1.88e-2 & -- & 1.68e-3 & -- & 2.68e-5 & --  & 1.89e-2 & -- & 1.49e-3 & -- & 2.02e-5 & --\\
		$2^6$ & 1.25e-2 & 0.59 & 5.31e-4 & 1.66 & 3.27e-6 & 3.03 & 1.24e-2 & 0.61 & 4.53e-4 & 1.72 & 2.31e-6 & 3.13\\
		$2^7$ & 8.29e-3 & 0.59 & 1.66e-4 & 1.68 & 3.91e-7 & 3.07 & 8.22e-3 & 0.59 & 1.41e-4 & 1.68 & 2.75e-7 & 3.07\\
		$2^8$  & 5.48e-3 & 0.60 & 5.13e-5 & 1.69 & 4.61e-8 & 3.08 & 5.42e-3 & 0.60 & 4.33e-5 & 1.70 & 3.19e-8 & 3.11 \\
		$2^9$ & 3.61e-3 & 0.60 & 1.58e-5 & 1.70 & 5.41e-9 & 3.09 & 3.58e-3 & 0.60 & 1.34e-5 & 1.70 & 3.74e-9 & 3.09\\
		$2^{10}$  & 2.37e-3 & 0.61 & 4.88e-6 & 1.70 & 6.33e-10 & 3.10& 2.36e-3 & 0.60 & 4.11e-6 & 1.70 & 4.36e-10 & 3.10 \\
		\hline	
	\end{tabular} \label{LinfSeta}
\end{table}

%

\vskip 6pt

\subsubsection{{\sc Type-II:}\, $u(x)=|x-\theta|^\alpha\ln|x-\theta|$ in \eqref{uthetafun2}}
We first present the following  useful  formulas.
\begin{lemma}\label{LmmForLogSingularity}
For real $\eta>-1, s\ge 0$ and $x>a,$
	\begin{equation}\label{IntmuGamma}
	\begin{split}
	{}_a I_{x}^s \{(x-a)^\eta\ln (x-a)\}  =	\frac{\Gamma(\eta+1)}{\Gamma(\eta+s+1)}
	 \big\{\ln (x-a)+ \psi(\eta+1) -\psi(\eta+s+1)\big\} (x-a)^{\eta+s},
	\end{split}
	\end{equation}
and the same formula holds for ${}_x I_{b}^s \{(b-x)^\eta\ln (b-x)\}$ {\rm(}for $x<b${\rm)} with
$b-x$ in place of $x-a$.
Here,  
\begin{equation}\label{psizfun}
\ln z-\frac 1 {2z}<\psi(z)=\frac{ \Gamma'(z)} {\Gamma(z)}<\ln z-\frac 1 z,\quad z>0.
\end{equation}
	\end{lemma}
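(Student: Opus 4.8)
The plan is to establish the identity \eqref{IntmuGamma} by reducing the left-sided fractional integral to an Eulerian integral and differentiating it with respect to the parameter $\eta$, exploiting the elementary observation that $(x-a)^{\eta}\ln(x-a)=\partial_{\eta}\big((x-a)^{\eta}\big)$. When $s=0$ the operator ${}_{a}I_{x}^{0}$ is the identity and the right-hand side collapses since $\psi(\eta+1)-\psi(\eta+1)=0$, so it suffices to treat $s>0$.

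First I would insert $f(y)=(y-a)^{\eta}\ln(y-a)$ into the definition \eqref{leftintRL} and perform the change of variables $y=a+t(x-a)$ with $t\in(0,1)$, for which $x-y=(x-a)(1-t)$, $y-a=(x-a)t$ and $\ln(y-a)=\ln(x-a)+\ln t$. This turns the fractional integral into
\[
{}_{a}I_{x}^{s}\{(x-a)^{\eta}\ln(x-a)\}=\frac{(x-a)^{\eta+s}}{\Gamma(s)}\Big\{\ln(x-a)\int_{0}^{1}t^{\eta}(1-t)^{s-1}\,dt+\int_{0}^{1}t^{\eta}(1-t)^{s-1}\ln t\,dt\Big\}.
\]
The first inner integral is the Beta function $B(\eta+1,s)=\Gamma(\eta+1)\Gamma(s)/\Gamma(\eta+s+1)$, which by itself recovers \eqref{intformu}. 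The second is $\partial_{\eta}B(\eta+1,s)$, and differentiating $\ln B(\eta+1,s)=\ln\Gamma(\eta+1)+\ln\Gamma(s)-\ln\Gamma(\eta+s+1)$ gives $\partial_{\eta}B(\eta+1,s)=B(\eta+1,s)\big(\psi(\eta+1)-\psi(\eta+s+1)\big)$. Substituting both evaluations and simplifying $B(\eta+1,s)/\Gamma(s)=\Gamma(\eta+1)/\Gamma(\eta+s+1)$ yields exactly \eqref{IntmuGamma}. The right-sided statement then follows either by applying the reflection $x\mapsto a+b-x$ to the left-sided identity, or by running the analogous substitution $y=b-t(b-x)$ directly.

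The only step that needs genuine care is identifying $\int_{0}^{1}t^{\eta}(1-t)^{s-1}\ln t\,dt$ with $\partial_{\eta}B(\eta+1,s)$, that is, differentiation under the integral sign in the Beta function. I would justify this by fixing a compact subinterval $[\eta_{0},\eta_{1}]\subset(-1,\infty)$ and exhibiting an integrable majorant for the $\eta$-derivative of the integrand that is uniform in $\eta$: near $t=0$ one has $|t^{\eta}(1-t)^{s-1}\ln t|\le C_{\varepsilon}\,t^{\eta_{0}-\varepsilon}$ for any small $\varepsilon>0$, which is integrable once $\eta_{0}-\varepsilon>-1$, while near $t=1$ the factor $(1-t)^{s-1}$ is integrable because $s>0$; the dominated convergence theorem then licenses the differentiation under the integral and simultaneously confirms that all the integrals above converge absolutely, so the change of variables is legitimate. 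Finally, the two-sided bound \eqref{psizfun} for the digamma function $\psi$ is classical (see, e.g., \cite{Abr.I64}); I would simply recall it here, since it is used in the later error estimates rather than in the derivation of \eqref{IntmuGamma}.
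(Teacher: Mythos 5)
Your argument is correct, but it takes a genuinely different route from the paper's. The paper never opens up the integral: it writes $\ln z=\lim_{\varepsilon\to 0}(z^{\varepsilon}-1)/\varepsilon$, applies the already-established monomial formula \eqref{intformu} to $(x-a)^{\eta+\varepsilon}$ and $(x-a)^{\eta}$, and then evaluates the resulting difference quotient of Gamma ratios by L'Hospital's rule, which is exactly where the terms $\psi(\eta+1)-\psi(\eta+s+1)$ emerge; the right-sided formula is obtained the same way. You instead substitute into the definition \eqref{leftintRL}, rescale to a Beta integral, and differentiate $B(\eta+1,s)$ under the integral sign in $\eta$. Both proofs encode the same underlying idea (the logarithm as a parameter derivative, or limit, of a power), but yours is self-contained --- you do not need \eqref{intformu}, you recover it en route --- and it makes explicit the interchange-of-limits step via a dominated-convergence majorant, a step the paper's proof performs silently when it pulls $\lim_{\varepsilon\to 0}$ outside the fractional integral. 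The paper's version is shorter because it reuses \eqref{intformu} and keeps all the work at the level of Gamma-function identities. Your treatment of $s=0$ (identity operator, vanishing $\psi$-difference) and of the right-sided case by reflection is fine, and recalling the classical bound \eqref{psizfun} without proof matches the paper, which simply cites Alzer for it.
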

\begin{proof} The formula \eqref{IntmuGamma} is a direct consequence of  \cite[(2.50)]{Samko1993Book}.
%
The property of the $\psi$-function in \eqref{psizfun} can be found in  \cite[(2.2)]{Alzer1997MC}.
Note that we can derive the formula for ${}_x I_{b}^s \{(b-x)^\eta\ln (b-x)\}$ in the same manner.
	\end{proof}

\begin{thm}\label{SpaceForLog}  For any $\alpha\ge 0$ and $\theta\in (-1,1),$ we have
\begin{equation}\label{uexpsolu}
u(x)=|x-\theta|^\alpha\ln|x-\theta|\in {\mathbb W}^{\alpha+1-\epsilon}_\theta(\Omega),\quad \forall\, \epsilon\in (0,1).
\end{equation}
Moreover, we have the following uniform bound of the Chebyshev expansion coefficients:
		\begin{equation}\label{BoundForUnLogSing}
	\begin{split}
	|\hat u_n^{C}|\le & \frac{U_\theta^{[\sigma],\{\sigma\}}}{2^{\sigma-1} \pi}
	\frac{ \Gamma( ({n-\sigma-1})/ 2)}{\Gamma( ({n+\sigma+1})/2)} ,
	\end{split}
	\end{equation}
where $\sigma:=\alpha+1-\epsilon,$ and $|\hat u_n^C|\le Cn^{-\sigma}$ for large $n.$

If  $\theta=0,$ then we have    $\hat u_{2k+1}^C=0,$ and the exact formula:
		 \begin{equation}\label{UncLog-0}
		 \begin{split}
		 \hat u_{2k}^C=&\frac{\Gamma(\alpha+1)} {2^{\alpha-2}\pi}\frac{\Gamma(k-\alpha/2)} {\Gamma(k+\alpha/2+1)}
		 \Big\{	\pi\cos \frac{\alpha \pi} 2	+\sin \frac{\alpha \pi} 2\big(2\psi(\alpha+1) \\
		 &
		 -2\ln 2- \psi(k-\alpha/2)-\psi(k+\alpha/2+1)\big)\Big\},\quad \forall\,  k\in {\mathbb N}_0,
		 \end{split}
		 \end{equation}
which  enjoys  the asymptotic behaviour
		\begin{equation}\label{UncLogasymptotic-0}
\begin{split}
\hat u_{2k}^C=&\frac{\Gamma(\alpha+1)} {2^{\alpha-3}\pi}k^{-\alpha-1}
\Big\{\frac{\pi}2 \cos \frac{\alpha \pi} 2+\sin \frac{\alpha \pi} 2\big(\psi(\alpha+1)-\ln 2-\ln k\big)\Big\}\\
&+O(k^{-\alpha-3}\ln k)\,\sin \frac{\alpha \pi} 2+O(k^{-\alpha-3}),\quad k\gg 1.
\end{split}
\end{equation}		
\end{thm}
\begin{proof} Let $m=[\alpha]+1$ and $\nu=\alpha-m.$  We derive from a direct calculation that
\begin{equation}\label{dxPalphalnx}
	\begin{split}
	u^{(k)}(x)= ( {\rm sgn }(x-\theta))^k|x-\theta|^{\alpha-k}\big( d_\alpha^k\ln|x-\theta|+f_\alpha^k), \quad k\ge 0,
			\end{split}
		\end{equation}
	where $d_\alpha^k$ is the same as in \eqref{dkalpha}, and
	$$f_\alpha^k:= \sum_{j=1}^k\frac{(-1)^{j-1} \Gamma(k+1) \Gamma(\alpha+1)}{j\Gamma(k-j+1)\Gamma(\alpha-k+j+1)}.$$	
We see that $u\in L^1(\Omega)$ and  $u, \cdots, u^{(m-1)}\in {\rm AC}(\Omega).$ Next, using Lemma \ref{LmmForLogSingularity},  we obtain that for $x\in (\theta,1),$
	\begin{equation*}
\begin{split}
 {}_{\theta}I_{x}^{1-s} u^{(m)} =&d_\alpha^m
{}_{\theta}I_{x}^{1-s}\big\{  (x-\theta)^{\alpha-m}\ln (x-\theta)\big\}+f_\alpha^m{}_{\theta}I_{x}^{1-s} \big\{(x-\theta)^{\alpha-m}\big\}
 \\
 =&d_\alpha^m \frac{\Gamma(\nu+1)}{\Gamma(\nu+2-s)}
 \ln (x-\theta) (x-\theta)^{\nu+1-s} \\
 &+\frac{\Gamma(\nu+1)}{\Gamma(\nu+2-s)}
 \big(\psi(\nu+1) -\psi(\nu+2-s)+f_\alpha^m\big) (x-\theta)^{\nu+1-s}.
	\end{split}
\end{equation*}
Thus, if $\nu+1-s>0,$ i.e.,  $s<\alpha+1-m,$ then
${}_{\theta}I_{x}^{1-s} u^{(m)}  \in{\rm BV}(\Omega_\theta^+).$
Similarly, under the same condition, we have
	${}_{x}I_{\theta}^{1-s}  u^{(m)}\in{\rm BV}(\Omega_\theta^-).$
By the definition \eqref{fracA}, we obtain $u\in {\mathbb W}^{\mu}_\theta (\Omega),$
where $\mu=m+s<\alpha+1.$ This implies \eqref{uexpsolu}.
The bound in \eqref{BoundForUnLogSing} follows from \eqref{BoundForUnRealB} straightforwardly.

 If $\theta=0,$ then $u(x)$ is an even function, so $\hat u_{2k+1}^C=0.$
  It is known that
	\begin{equation}\label{LogFun}
	\ln z= \lim_{\varepsilon\to 0}\frac{z^\varepsilon-1}{\varepsilon},\quad z>0.
	\end{equation}
		Using    \eqref{LogFun}, we derive from  	\eqref{SpecialCasetheta=01} that
		\begin{equation}\label{UncLog}
		\begin{split}
		\hat u_{2k}^C&= \frac 2\pi  \int_{-1}^1 \Big\{\lim_{\varepsilon\to 0}\frac{|x|^{\varepsilon+\alpha}-|x|^\alpha}{\varepsilon}\Big\}\frac{T_{2k}(x)} {\sqrt{1-x^2}} dx \\
		&=(-1)^{k}\lim_{\varepsilon\to 0}\frac{1}{\varepsilon}\Big\{ \sin \frac{(\varepsilon+\alpha) \pi} 2
		\frac{\Gamma(\varepsilon+\alpha+1)} {2^{\varepsilon+\alpha-1}\pi}\frac{\Gamma(k-(\varepsilon+\alpha)/2)} {\Gamma(k+(\varepsilon+\alpha)/2+1)}
		\\
		&	
		\quad -\sin \frac{\alpha \pi} 2
		\frac{\Gamma(\alpha+1)} {2^{\alpha-1}\pi}\frac{\Gamma(k-\alpha/2)} {\Gamma(k+\alpha/2+1)}\Big\}.
		\end{split}
		\end{equation}
		Noting that
			\begin{equation*}
		\begin{split}
		\frac{d}{d\varepsilon} \Big\{ & \sin \frac{(\varepsilon+\alpha) \pi} 2
		\frac{\Gamma(\varepsilon+\alpha+1)} {2^{\varepsilon}}\frac{\Gamma(k-(\varepsilon+\alpha)/2)} {\Gamma(k+(\varepsilon+\alpha)/2+1)}\Big\}\\
		=&
		\frac{\Gamma(\varepsilon+\alpha+1)} {2^{\varepsilon}}\frac{\Gamma(k-(\varepsilon+\alpha)/2)} {\Gamma(k+(\varepsilon+\alpha)/2+1)}
		\Big\{ \frac{\pi}{2}\cos \frac{(\varepsilon+\alpha) \pi} 2
		+ \sin \frac{(\varepsilon+\alpha) \pi} 2\\
		&\times\big(\psi(\varepsilon+\alpha+1)
		-\ln 2-\psi(k-(\varepsilon+\alpha)/2)/2-\psi(k+(\varepsilon+\alpha)/2+1)/2 \big)\Big\},
		\end{split}
		\end{equation*}
 we obtain \eqref{UncLog-0} from \eqref{UncLog} and   the L'Hospital's rule immediately.
		
		Taking $z=k-\alpha/2$ in \eqref{psizfun}, we obtain
		\begin{equation*}
		\ln(k-\alpha/2)-\frac 1 {2k-\alpha}<\psi(k-\alpha/2)<\ln (k-\alpha/2)-\frac 1 {k-\alpha/2},
		\end{equation*}
		which implies that for $k\gg 1,$
	\begin{equation}\label{ShaperBoundPsi0-2}
		\psi(k-\alpha/2) = \ln k + O(k^{-1});\quad  		\psi(k+\alpha/2+1)= \ln k + O(k^{-1}).
		\end{equation}
	Using \eqref{gamratioA} leads to
			\begin{equation}\label{ShaperBoundPsi0-4}
		\frac{\Gamma(k-\alpha/2)} {\Gamma(k+\alpha/2+1)}= k^{-\alpha-1}\big(1+ O(k^{-2})\big),\;\;k\gg 1.
		\end{equation}
From \eqref{UncLog-0} and \eqref{ShaperBoundPsi0-2}-\eqref{ShaperBoundPsi0-4}, we obtain
		\eqref{UncLogasymptotic-0}.
\end{proof}

\begin{rem}\label{alphasingul} {\em Consider the Chebyshev expansion of $u=|x|^\alpha \ln |x|,$
we  observe from \eqref{UncLogasymptotic-0} that for $n\gg 1,$ $|\hat u_{n}^C|\le C(\ln n)n^{-(\alpha+1)}.$ Therefore, we obtain directly the optimal estimates:
\begin{equation}\label{modeeqnA}
\begin{split}
&\|u-\pi_N^C u\|_{L^\infty(\Omega)}\le \sum_{n=N+1}^\infty |\hat u_n^C| \le C(\ln N) N^{-\alpha};  \quad
 \|u-\pi_N^C u\|_{L^2_\omega(\Omega)} \le C(\ln N) N^{-\alpha-1/2}.
\end{split}
\end{equation}
However, we find from \eqref{uexpsolu} that the space ${\mathbb W}^{\alpha+1-\epsilon}_\theta(\Omega)$ is suboptimal to characterize this type of singularity.  Indeed, by Theorem \ref{TruncatedChebyshevLinf}, we only have $\|u-\pi_N^C u\|_{L^\infty(\Omega)}=O(N^{\epsilon-\alpha})$ and $ \|u-\pi_N^C u\|_{L^2_\omega(\Omega)}=O(N^{\epsilon-\alpha-1/2}).$
The situation is reminiscent of the Besov framework in \cite{Bab.G00}, where the spaces of Type-I and Type-II are defined through different space interpolation.    The question of how to modify the fractional space  to best characterise  Type-II singularity in our setting appears nontrivial and is still open.
%
}\end{rem}


\section{Improving existing results}\label{sect:existingest}
\setcounter{equation}{0}
\setcounter{lmm}{0}
\setcounter{thm}{0}
\setcounter{cor}{0}

In this section,  we show that the previous estimates with $s\to 1$    improve   the existing results  on Chebyshev approximations (see, e.g.,  \cite{Trefethen2008SIREV,Xiang2010NM,Trefethen2013Book,Majidian2017ANM}).

\subsection{Existing estimates}
As in \cite{Trefethen2008SIREV}, let $\|\cdot\|_T$ be the Chebyshev-weighted $1$-norm: 
\begin{equation}\label{cheby1norm}
\|u\|_T=\Big\|\frac{u'(x)}{\sqrt{1-x^2}}\Big\|_1, 
\end{equation}
which  is defined via a Stieltjes integral for any $u$ of bounded variation.
\begin{lemma}\label{trenstha}{\rm\bf(see  \cite[Thms 4.2-4.3]{Trefethen2008SIREV}).}
If $u, u',\cdots, u^{(m-1)}$ are absolutely continuous on $[-1,1],$ and if $\|u^{(m)}\|_T=V_T<\infty$ with integer $m\ge 0,$
then for  each $n\ge m+1, $
\begin{equation}\label{TrestA}
\big|\hat u_n^C\big|\le \frac{2\,V_T}{\pi n(n-1)\cdots (n-m)},
\end{equation}
and for integer $m\ge 1,$ and integer  $N\ge m+1,$
\begin{equation}\label{estTrestA}
\big\|u-\pi_N^C u\big\|_{L^\infty(\Omega)}\le \frac{2\,V_T}{\pi m\, (N-m)^m}.
\end{equation}
\end{lemma}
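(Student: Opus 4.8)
The plan is to reduce both parts to the classical identity for Chebyshev coefficients obtained by the substitution $x=\cos\theta$ followed by $m+1$ integrations by parts. First I would write $\hat u_n^C=\frac{2}{\pi}\int_0^\pi u(\cos\theta)\cos(n\theta)\,d\theta$ and integrate by parts once: since $u$ extends to a bounded function on $[-1,1]$ (it lies in ${\rm AC}(\Omega)$ for $m\ge1$ and in ${\rm BV}(\Omega)$ for $m=0$) and $\sin(n\theta)$ vanishes at $\theta=0,\pi$, the boundary term drops and one gets $\hat u_n^C=\frac{2}{\pi n}\int_0^\pi u'(\cos\theta)\,\sin\theta\,\sin(n\theta)\,d\theta$. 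For $m=0$ this already proves \eqref{TrestA}: $|\sin\theta\,\sin(n\theta)|\le1$ and, by $x=\cos\theta$, $\int_0^\pi|u'(\cos\theta)|\,d\theta=\|u'\|_{L^1_\omega(\Omega)}=V_T$, so $|\hat u_n^C|\le\frac{2V_T}{\pi n}$. (When $u$ is merely of bounded variation the first step is read in the Stieltjes sense of Lemma~\ref{IntByPartsInW11}.)

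For $m\ge1$ I would iterate. Using $\sin\theta\sin(p\theta)=\frac12\bigl(\cos((p-1)\theta)-\cos((p+1)\theta)\bigr)$, whose antiderivative $\frac12\bigl(\frac{\sin((p-1)\theta)}{p-1}-\frac{\sin((p+1)\theta)}{p+1}\bigr)$ is again a sum of sines and thus vanishes at $\theta=0,\pi$, each further integration by parts turns $u^{(k-1)}(\cos\theta)$ into $u^{(k)}(\cos\theta)$ with no boundary term (each $u^{(k)}$, $k\le m$, lies in ${\rm AC}$ or ${\rm BV}$ on $[-1,1]$, hence is bounded; the final step uses Lemma~\ref{IntByPartsInW11}). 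After $m+1$ steps one reaches
\[
\hat u_n^C=\frac{2}{\pi n}\int_0^\pi u^{(m+1)}(\cos\theta)\,\sin\theta\,S_m(\theta)\,d\theta ,
\]
where $S_0(\theta)=\sin(n\theta)$ and $S_k$ is obtained from $S_{k-1}$ by replacing each $\sin(p\theta)$ with $\frac12\bigl(\frac{\sin((p-1)\theta)}{p-1}-\frac{\sin((p+1)\theta)}{p+1}\bigr)$; thus each $S_k$ is a sum of sines supported on the frequencies $n-k,\,n-k+2,\,\dots,\,n+k$. Since $\sin\theta\sin(p\theta)=\frac12\bigl(\cos((p-1)\theta)-\cos((p+1)\theta)\bigr)$ has modulus $\le1$, we get $|\sin\theta\,S_m(\theta)|\le\Sigma_m$, where $\Sigma_m$ is the $\ell^1$-norm of the coefficients of $S_m$; together with $\int_0^\pi|u^{(m+1)}(\cos\theta)|\,d\theta=\|u^{(m+1)}\|_{L^1_\omega(\Omega)}=V_T$ this gives $|\hat u_n^C|\le\frac{2V_T}{\pi n}\,\Sigma_m$. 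By the replacement rule and the triangle inequality, $\Sigma_k\le\sum_p|c_p|\,\frac{p}{p^2-1}$, the sum running over the frequencies $p$ in $S_{k-1}$ with coefficients $c_p$; since $p\ge n-k+1\ge2$ (as $n\ge m+1\ge k+1$), one has $\frac{p}{p^2-1}<\frac1{p-1}\le\frac1{n-k}$, hence $\Sigma_k\le\frac1{n-k}\,\Sigma_{k-1}$. Starting from $\Sigma_0=1$, induction yields $\Sigma_m\le\bigl((n-1)(n-2)\cdots(n-m)\bigr)^{-1}$, and \eqref{TrestA} follows.

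For part (ii) I would sum the coefficient bounds: since $|T_n|\le1$ on $[-1,1]$,
\[
\|u-\pi_N^C u\|_{L^\infty(\Omega)}\le\sum_{n=N+1}^{\infty}|\hat u_n^C|\le\frac{2V_T}{\pi}\sum_{n=N+1}^{\infty}\frac{1}{n(n-1)\cdots(n-m)} ,
\]
a convergent series for $m\ge1$. The partial-fraction identity $\frac{1}{n(n-1)\cdots(n-m)}=\frac1m\bigl(\frac{1}{(n-1)(n-2)\cdots(n-m)}-\frac{1}{n(n-1)\cdots(n-m+1)}\bigr)$ makes it telescope, with sum $\frac{1}{m\,N(N-1)\cdots(N-m+1)}\le\frac{1}{m(N-m)^m}$; this is exactly \eqref{estTrestA}.

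The algebra above is routine; the step I expect to require the most care is the justification of the integrations by parts when the data is only of bounded variation — for the top-order derivative effectively a finite signed measure. There one invokes the Stieltjes formulation of Lemma~\ref{IntByPartsInW11}, verifies that each boundary term vanishes because the trigonometric factor produced at that stage is a sum of sines killing $\theta=0,\pi$ while $u^{(k)}(\cos\theta)$ stays bounded, and bounds the resulting Stieltjes integral by $\Sigma_m$ times the total variation of $u^{(m)}(\cos\cdot)$ on $[0,\pi]$ — equivalently of $u^{(m)}$ on $[-1,1]$ — which is at most the $\omega$-weighted total variation $V_T$ since $\omega\ge1$. These are precisely the points carried out in \cite{Trefethen2008SIREV}.
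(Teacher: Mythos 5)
Your proof is correct: the change of variables $x=\cos\theta$, the repeated integrations by parts with vanishing sine boundary terms, the recursion $\Sigma_k\le \Sigma_{k-1}/(n-k)$ (valid since all frequencies occurring before the last step are at least $2$ when $n\ge m+1$), and the telescoping of $\sum_{n>N} \frac{1}{n(n-1)\cdots(n-m)}=\frac{1}{m\,N(N-1)\cdots(N-m+1)}\le \frac{1}{m(N-m)^m}$ all check out, and the weighted norm $V_T=\|u^{(m+1)}\|_{L^1_\omega}$ arises exactly from the Jacobian $d\theta=dx/\sqrt{1-x^2}$. Note, however, that the paper itself gives no proof of Lemma \ref{trenstha}: it is quoted from Trefethen (Theorems 4.2--4.3 of \cite{Trefethen2008SIREV}), so what you have reconstructed is essentially the classical trigonometric argument of that reference (and of \cite{Xiang2010NM,Majidian2017ANM}), not the route this paper takes for its own, sharper versions. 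The paper instead works directly in the variable $x$, using the Rodrigues-type relation \eqref{RodriguesF} to reach the identity \eqref{byparts0}/\eqref{HatUnCaseA} and then the new uniform bounds for weighted Gegenbauer polynomials (Theorem \ref{LBoundForGegPoly}, Corollary \ref{intcaseI}); this yields Theorem \ref{DecayRate} and Corollary \ref{unChat}, where the Chebyshev-weighted norm $V_T$ is replaced by the smaller unweighted norm $V_L=\|u^{(m+1)}\|_{L^1(\Omega)}$ and the product $n(n-1)\cdots(n-m)$ is improved. It is worth observing that your own computation nearly exposes this improvement: in the smooth case you bound $|\sin\theta\,S_m(\theta)|\le\Sigma_m$ and then absorb $d\theta$ into the weighted norm, whereas bounding $|S_m(\theta)|\le\Sigma_m$ and keeping the factor $\sin\theta$ as the Jacobian would have produced $\|u^{(m+1)}\|_{L^1(\Omega)}$ instead of $V_T$. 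The only point needing care, which you correctly flag, is the top-order step when $u^{(m)}$ is merely of bounded variation: there the last integration by parts must be read in the Stieltjes sense (Lemma \ref{IntByPartsInW11}), and the resulting bound is in terms of the total variation of $u^{(m)}$, consistent with the interpretation of $V_T$ in the lemma (cf. Remark after Lemma \ref{trenstha} concerning \cite{Trefethen2013Book}).
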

We remark that the Chebyshev weight is removed in  Trefethen \cite[Thms 7.1-7.2]{Trefethen2013Book}, i.e.,
$V_T$ is replaced by the total variation of $u^{(m)}.$

 Following the argument of summation by  certain telescoping series in \cite{Xiang2010NM},  Majidian (cf. \cite[Thm 2.1]{Majidian2017ANM}) derived sharper bounds. For comparison, we quote the estimates therein below.
\begin{lemma}\label{trensthaImp}{\rm\bf(see \cite[Thm 2.1]{Majidian2017ANM}).}
If $u, u',\cdots, u^{(m-1)}$ are absolutely continuous on $[-1,1],$ and if $\|u^{(m)}\|_T=V_T<\infty$ with integer $m\ge 0,$
then for  each $n\ge m+1, $
\begin{equation}\label{TrestAimp0}
\big|\hat u_n^C\big|\le  \frac{2\,V_T}{\pi}
\displaystyle\prod\limits_{j=0}^m \dfrac 1 {n-m+2j}\,.
\end{equation}
\end{lemma}

\subsection{Improved estimates}
\begin{theorem}\label{DecayRate}
Suppose that for integer $m\ge 0,$  $u, u',\cdots, u^{(m-1)}$ are absolutely continuous on $[-1,1],$ and  $u^{(m)}$ is of bounded variation with the total variation denoted by $V_L^{(m)}.$
	\begin{itemize}
		\item[(i)] If $n\ge m+1$  and  $n-m$ is odd,   then
		\begin{equation} \label{anbound0}
			\big|\hat u_n^{C}\big| \le  
 \frac {2\,  V_L^{(m)}}\pi \prod_{j=0}^m \frac{1}{n-m+2j} \,.        
		\end{equation}
		\item[(ii)]If $n\ge m+1$  and  $n-m$ is even,   then
		\begin{equation} \label{anbound00}
			\big|\hat u_n^{C}\big| \le
			  \frac {2\, V_L^{(m)}} { \pi \sqrt{n^2-m^2}}  \prod_{j=0}^{m-1} \frac{1}{n-m+2j-1} \,.   
		\end{equation}
		\item[(iii)] If $0\le n\le m+1,$ then
		\begin{equation} \label{anbound1}
	|\hat u_n^{C}|\le   \frac {2\, V_L^{(n)}}  {\pi (2n-1)!!}\,.
		\end{equation}
	\end{itemize}
\end{theorem}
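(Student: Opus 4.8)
The plan is to push the integration-by-parts chain \eqref{byparts0} one step beyond where the proof of Theorem \ref{IdentityForUn} stops, this time using $u^{(m)}\in{\rm BV}(\Omega)$ instead of splitting at an interior point. Since $u,u',\dots,u^{(m-1)}\in{\rm AC}(\Omega)$, the computation \eqref{byparts0}--\eqref{IdentityForUn-1} already gives, for $n\ge m$,
\[
\hat u_n^{C}=\frac{1}{\sqrt{\pi}\,2^{m-1}\Gamma(m+1/2)}\int_{-1}^{1}u^{(m)}(x)\,G_{n-m}^{(m)}(x)\,\omega_m(x)\,dx.
\]
First I would apply \eqref{RodriguesF} with $\lambda=m$ (the $k=m$ instance of \eqref{dFCI00}), i.e. $\omega_m G_{n-m}^{(m)}=-\frac{1}{2m+1}\{\omega_{m+1}G_{n-m-1}^{(m+1)}\}'$, valid for $n\ge m+1$, and then integrate by parts once more via the Stieltjes formula \eqref{IPPW1122}. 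Here $\omega_{m+1}G_{n-m-1}^{(m+1)}$ is absolutely continuous on $[-1,1]$ (its derivative equals $-(2m+1)\omega_m G_{n-m}^{(m)}\in L^1(\Omega)$) and vanishes at $x=\pm1$, so the endpoint contributions drop; with $u^{(m)}\in{\rm BV}(\Omega)$ and $(2m+1)\Gamma(m+1/2)=2\Gamma(m+3/2)$ this yields
\[
\hat u_n^{C}=\frac{1}{\sqrt{\pi}\,2^{m}\Gamma(m+3/2)}\int_{-1}^{1}\omega_{m+1}(x)\,G_{n-m-1}^{(m+1)}(x)\,du^{(m)}(x),\qquad n\ge m+1,
\]
and hence $|\hat u_n^{C}|\le \big(\sqrt\pi\,2^m\Gamma(m+3/2)\big)^{-1}V_L\,\max_{|x|\le1}\{\omega_{m+1}(x)|G_{n-m-1}^{(m+1)}(x)|\}$, where $V_L=\|u^{(m+1)}\|_{L^1(\Omega)}$ is the total variation of $u^{(m)}$.

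Next I would insert the appropriate half of Corollary \ref{intcaseI} with $\lambda=m+1\ge1$, splitting on the parity of $n-m-1$: if $n-m$ is odd then $n-m-1=2l$ and \eqref{polynomialBnd} applies, whereas if $n-m$ is even then $n-m-1=2l+1$ and \eqref{polynomialBnd2} applies. In each case one collapses the ratio $\Gamma(l+1/2)/\Gamma(l+m+3/2)$ into a product of $m+1$ linear factors in $n$ using $\Gamma(z+1)=z\Gamma(z)$, cancels $\Gamma(m+3/2)$ against the normalising constant and the powers of $2$, and — in the even case — simplifies the square-root denominator by completing the square: $2(m+1)-1+(2l+1)(2l+2(m+1)+1)=(2l+m+2)^2-m^2=n^2-m^2$ because $2l+m+2=n$. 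This produces \eqref{anbound0} and \eqref{anbound00}.

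For part (iii) it suffices to truncate the recursion \eqref{byparts0} after $n$ steps rather than $m$: for $1\le n\le m$ this is legitimate since $u^{(n-1)}\in{\rm AC}(\Omega)$, and because $G_0^{(n)}\equiv1$ it gives $\hat u_n^{C}=\frac{2}{\pi(2n-1)!!}\int_{-1}^{1}u^{(n)}(x)\,\omega_n(x)\,dx$; since $0\le\omega_n(x)\le1$ on $[-1,1]$ for $n\ge1$, \eqref{anbound1} follows, and the remaining case $n=m+1$ is exactly the $l=0$ instance of \eqref{anbound0} (note $\frac{2V_L}{\pi}\prod_{j=0}^m(1+2j)^{-1}=\frac{2V_L}{\pi(2m+1)!!}$, in agreement with \eqref{anbound1} at $n=m+1$). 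The main work is bookkeeping rather than conceptual: one must check that the Gamma-ratio and square-root expressions emerging from Corollary \ref{intcaseI} collapse exactly to the claimed products (in particular the identity $2(m+1)-1+(2l+1)(2l+2m+3)=(2l+m+2)^2-m^2$ in the even case), and one must make sure the Stieltjes integration by parts is fully justified — namely that $\omega_{m+1}G_{n-m-1}^{(m+1)}$ is absolutely continuous with vanishing endpoint values and $u^{(m)}\in{\rm BV}(\Omega)$, so that the boundary terms are genuinely zero.
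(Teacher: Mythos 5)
Your proposal is correct and follows essentially the same route as the paper: one further integration by parts beyond \eqref{byparts0} (the paper's identity \eqref{HatUnCaseA}), then Corollary \ref{intcaseI} with $\lambda=m+1$ split according to the parity of $n-m-1$, the simplification $2m+1+(2l+1)(2l+2m+3)=n^2-m^2$, and truncation of the integration-by-parts chain after $n$ steps for part (iii); your explicit justification of the Stieltjes integration by parts and of the vanishing boundary terms only makes precise what the paper leaves implicit. Note that, exactly as in the paper's own display \eqref{anbound0pp}, your even-case bookkeeping actually yields the slightly sharper product $\prod_{j=0}^{m-1}(n-m+2j+1)^{-1}$, from which the stated \eqref{anbound00} follows since $n\ge m+2$ in that case.
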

\begin{proof}   
We find from    \eqref{HatUnCaseC} (or \eqref{byparts0} with one more step of integration by parts) that  for $n\ge m+1,$
	\begin{equation}\label{HatUnCaseA}
	\begin{split}
	\hat u_n^{C}&= \frac 1 {(2m+1)!!}  \frac 2\pi \int_{-1}^{1} \, G_{n-m-1}^{(m+1)}(x) \omega_{m+1}(x) \,d u^{(m)}(x).
	\end{split}
	\end{equation}
Thus, by  \eqref{HatUnCaseA},
\begin{equation}\label{byparts001}
	\begin{split}
	\big|\hat u_n^{C}\big|& \le  \frac {V_L^{(m)}} {(2m+1)!!}  \frac 2\pi  \max_{|x|\le 1}\big\{\omega_{m+1}(x) |G_{n-m-1}^{(m+1)}(x) |\big\}.
\end{split}
	\end{equation}
If $n=m+2p+1$ with $p\in {\mathbb N}_0$, we derive from \eqref{polynomialBnd} with $l=p$ and $\lambda=m+1$ that
\begin{equation}\label{BoundGeg-2}
\begin{split}
\max_{|x|\le 1}\big\{\omega_{m+1}(x)  |G_{n-m-1}^{(m+1)}(x) |\big\}&\le \frac{\Gamma(m+3/2)\Gamma(p+1/2)}{\sqrt \pi\, \Gamma(m+p+3/2)}
=\frac{(2m+1)!!\,(2p-1)!!}{(2m+2p+1)!!}.
\end{split}
\end{equation}
Consequently, for  $n=m+2p+1$ with $p\in {\mathbb N}_n$,  we obtain from \eqref{byparts001}-\eqref{BoundGeg-2} that
		\begin{equation} \label{anbound0p}
\begin{split}
			\big|\hat u_n^{C}\big| &\le    \frac 2\pi     \frac{(2p-1)!! \,  {V_L^{(m)}}}{(2p+2m+1)!!} =
\frac 2 {\pi} \frac{V_L^{(m)}} {(2p+1)\cdot (2p+3)\cdots (2p+2m+1)}\\
&= \frac 2 {\pi} \frac{V_L^{(m)}} {(n-m)\cdot (n-m+2)\cdots  (n+m)},
 \end{split}
		\end{equation}
which implies  \eqref{anbound0}.

Similarly, if $n=m+2p+2$ with $p\in {\mathbb N}_0$, we derive from \eqref{polynomialBnd2} with $l=p$ and $\lambda=m+1$ that
\begin{equation}\label{BoundGeg-22}
\begin{split}
\max_{|x|\le 1}\big\{\omega_{m+1}(x)  |G_{n-m-1}^{(m+1)}(x) |\big\}&\le \frac{1}{\sqrt{(2p+2)(2m+2p+1)}} \frac{(2m+1)!!\,(2p+1)!!}{(2m+2p+1)!!},
\end{split}
\end{equation}
so by  \eqref{byparts001}, we have
\begin{equation} \label{anbound0pp}
\begin{split}
\big|\hat u_n^{C}\big| &\le
\frac 2\pi \frac 1{\sqrt{(2p+2)(2m+2p+1)}} \frac {V_L} {(2p+3)\cdot (2p+5)\cdots (2p+2m+1)}\\
&= \frac  2\pi  \frac 1 {\sqrt{n^2-m^2}} \frac{V_L} {(n-m+1)\cdot (n-m+3)\cdots (n+m-1)}.
\end{split}
		\end{equation}
This leads to \eqref{anbound00}.

  In case of  $0\le n\le m+1$,  we derive  from
\eqref{HatUnCaseA} (with $n=m+1$) and the factor $G_{0}^{(n)}(x)\equiv 1$ that
\begin{equation}\label{DecayRate-4}
\begin{split}
\hat u_n^{C}
=&\frac 1 {(2n-1)!!}  \frac 2\pi \int_{-1}^{1} \, \omega_{n}(x)\,du^{(n-1)}(x).
\end{split}
\end{equation}
Then  we obtain \eqref{anbound1} immediately.
	\end{proof}

Next, we unify the bounds in (i)-(ii) of Theorem \ref{DecayRate}  without loss of the rate of convergence. In fact, this relaxation  leads to the estimate
\eqref{TrestAimp0} in \cite[Thm 2.1]{Majidian2017ANM}, but with $V_L$ in place of $V_T.$ In other words,  the bounds in Theorem \ref{DecayRate} indeed improve the best available results.
\begin{cor}\label{unChat}  Under the same conditions as in Theorem \ref{DecayRate},
 we have  that  for all $n\ge m+1,$
	\begin{equation}\label{SharpBoundUn}
	|\hat u_n^{C}| \le  \frac {2\,  V_L^{(m)}}\pi \prod_{j=0}^m \frac{1}{n-m+2j}.
	\end{equation}
\end{cor}	
\begin{proof}
It is evident that by \eqref{anbound0}-\eqref{anbound00}, we only need to  prove this bound for  $n-m$ being even.  One verifies readily  the fundamental inequality:
	$$n^2-(p-1)^2\ge\sqrt{(n^2-p^2)(n^2-(p-2)^2)}, \quad{\rm for}\;\;\;  2\le p\le n.$$
If $m$ is even, we can pair up the factors and use the above inequality with $p=m,m-2,\cdots, 2$ to derive
	\begin{equation}\label{termssq}
	\begin{split}
	 (n- & m+1)  (n-m+3) \cdots (n-1)(n+1)  \cdots (n+m-3) (n+m-1)\\
	 & =(n-(m-1)^2) (n^2-(m-3)^2)\cdots (n^2-1) \\
	&\ge \sqrt{n^2-m^2}\, \sqrt{n^2-(m-2)^2}\, \sqrt{n^2-(m-2)^2}\, \sqrt{n^2-(m-4)^2}\, \cdots
	\\& =\sqrt{n^2-m^2}\,  (n-m+2)\cdots  (n+m-2).
	\end{split}
	\end{equation}
Similarly, if $m$ is odd, we remain the middle most factor intact and pair up the factors to derive the above.
Therefore,  multiplying  both sides of \eqref{termssq} by $\sqrt{n^2-m^2},$  we obtain
	\begin{equation}\label{Compare-1}
\frac{1}{\sqrt{n^2-m^2}}\, \prod_{j=0}^{m-1} \frac{1}{n-m+2j-1}\le \prod_{j=0}^m \frac{1}{n-m+2j}.	
\end{equation}
Then \eqref{SharpBoundUn}  follows from   \eqref{Compare-1} and  (i)-(ii) of Theorem \ref{DecayRate} directly.
\end{proof}

To show the sharpness of our improved bounds, we consider $u= |x-\theta|$, $\theta\in (-1,1)$ to compare upper bounds of $\hat u_n^{C}$.
In this case, we have $m=1$,
$u''=2\delta(x-\theta)$, $V_L^{(1)}=2$ and $V_T=2(1-\theta^2)^{-1/2}.$ Let  ${\rm Ratio}_1$ and ${\rm Ratio}_2$ be the ratios of upper bounds in \cite{Trefethen2013Book,Majidian2017ANM} (cf. \eqref{TrestA} with $V_T$ being replaced by the bounded variation of $u',$ and the bound in  \eqref{TrestAimp0}) and our improved bound in Theorem \ref{DecayRate}, respectively. In Figure \ref{FigForCompare}, we depict two ratios against various $n$ for two values of $\theta.$
We see that the improve bound is sharper than the existing ones, and the removal of the Chebyshev weight in $V_T$ is also significant for  the sharpness of the bounds.
\vskip 5pt
\begin{figure}[!ht]
	\begin{center}
		{~}\hspace*{-20pt}	 \includegraphics[width=0.42\textwidth,height=0.25\textwidth]{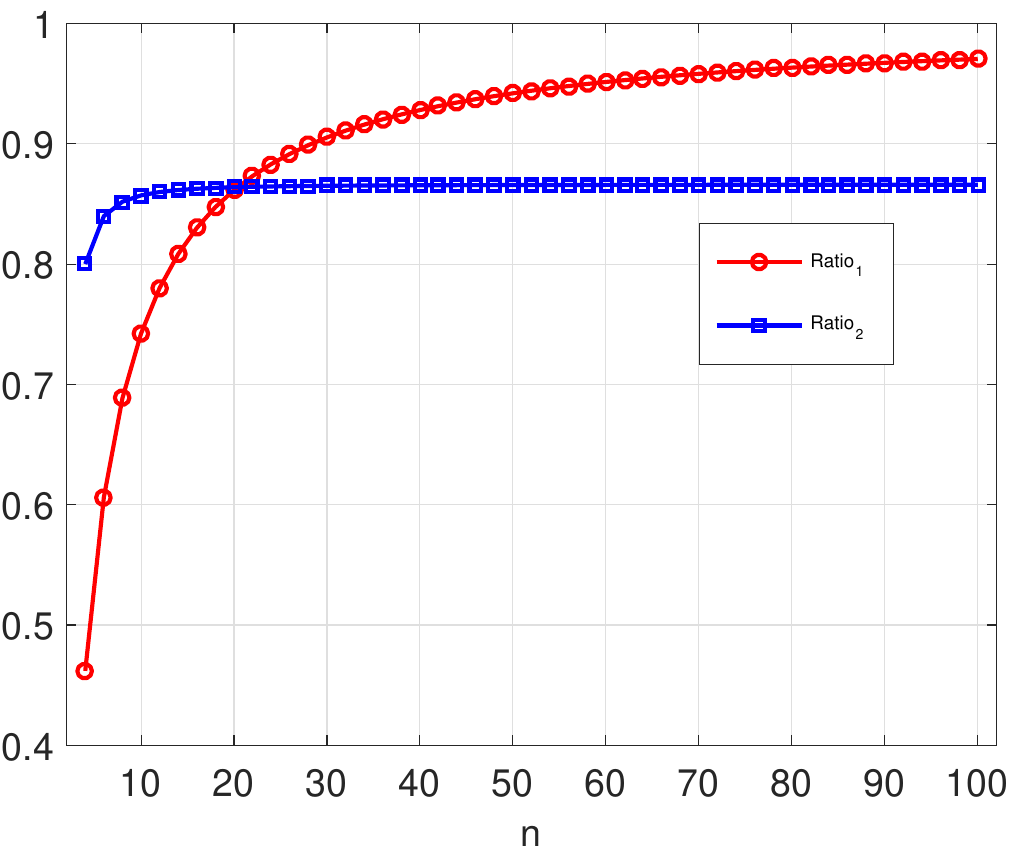}\qquad
		\includegraphics[width=0.42\textwidth,height=0.25\textwidth]{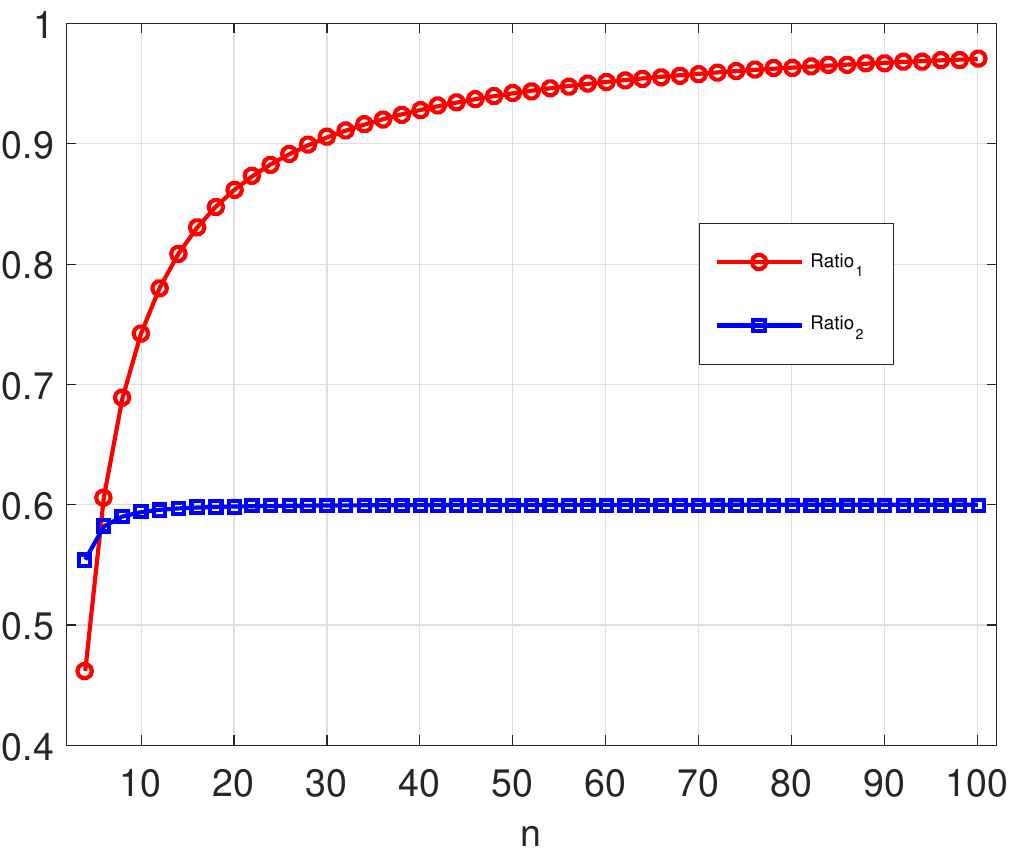}
		\caption{Ratios of the existing bounds and improved bound herein for  $u= |x-\theta|$ and  $\theta\in (-1,1).$      Left: $\theta=1/2.$ Right: $\theta=4/5$.}
		\label{FigForCompare}
	\end{center}
\end{figure}

To conclude this section, we  state below the improved $L^\infty$-estimates, and remark on the improvements  in Remark \ref{differentNote} below.
\begin{thm}\label{TruncatedChebyshev}  Let $u\in {\mathbb  W}^{m+1}(\Omega)$ with integer $m\ge 0.$
		\begin{itemize}
\item[(i)] If $1\le m\le N,$ then
		\begin{equation} \label{ProjectionErrorA}
		\|u-\pi_N^C u\|_{L^\infty(\Omega)}\le \frac {2 }  {m\pi }\bigg( \prod_{j=1}^m \frac {1} {N-m+2j-1}\bigg) V_L^{(m)}.
		\end{equation}
		\item[(ii)] If $m=0,$ then for all integer $N\ge 1,$
		\begin{equation}\label{upiNm0}
\big\|u-\pi_N^C u\big\|_{L^\infty(\Omega)}\le  V_L^{(0)}.  
\end{equation}
	\item[(iii)] If $m\ge N+1,$ then
		\begin{equation} \label{ProjectionErrorB}
		\|u-\pi_N^C u\|_{L^\infty(\Omega)} \le \frac {2}  { (2N+1)!!\,\pi} \sum_{n=N}^{m}   c_n\frac{(2N+1)!! } {(2n+1)!!} V_L^{(n)}, 
		\end{equation}
		where $c_n=1$ for all $N\le n\le m-1$ and $c_m=2.$
	\end{itemize}
\end{thm}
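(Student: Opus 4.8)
The plan is to derive all three estimates directly from the coefficient bounds already established in Theorem~\ref{DecayRate} and Corollary~\ref{unChat}, using the simple observation that $u-\pi_N^C u=\sum_{n=N+1}^\infty{}'\,\hat u_n^C T_n$ together with $\|T_n\|_{L^\infty(\Omega)}=1$, so that $\|u-\pi_N^C u\|_{L^\infty(\Omega)}\le \sum_{n=N+1}^\infty |\hat u_n^C|$. Everything then reduces to summing the explicit products $\prod_{j=0}^m (n-m+2j)^{-1}$ over $n$, and the key device will be the same telescoping trick already used in the proof of Theorem~\ref{TruncatedChebyshevLinf}.

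First I would prove (i). Assuming $1\le m\le N$, I invoke Corollary~\ref{unChat} to get $|\hat u_n^C|\le \frac{2V_L}{\pi}\prod_{j=0}^m\frac1{n-m+2j}$ for all $n\ge m+1$, where $V_L=\|u^{(m+1)}\|_{L^1(\Omega)}$. Introduce the ``tail'' quantity ${\mathcal P}_n:=\prod_{j=1}^m\frac1{n-m+2j-1}$ (a product of $m$ factors, one fewer than in the bound for $|\hat u_n^C|$), and verify the telescoping identity
\begin{equation*}
{\mathcal P}_n-{\mathcal P}_{n+2}=\Big(\frac{1}{ \ } \text{common factor}\Big)\big\{(n+m-1)-(n-m-1)\big\}\prod_{j=0}^m\frac1{n-m+2j}=2m\prod_{j=0}^m\frac1{n-m+2j},
\end{equation*}
i.e. $\prod_{j=0}^m\frac1{n-m+2j}=\frac1{2m}({\mathcal P}_n-{\mathcal P}_{n+2})$. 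Summing over $n\ge N+1$ and splitting into even/odd indexed subseries (each of which telescopes cleanly, as in \eqref{FracLinf-3}), the surviving boundary terms are ${\mathcal P}_{N+1}+{\mathcal P}_{N+2}$; then monotonicity of $n\mapsto{\mathcal P}_n$ gives ${\mathcal P}_{N+2}\le{\mathcal P}_{N+1}$, and collecting constants yields $\|u-\pi_N^C u\|_{L^\infty}\le\frac{2V_L}{m\pi}{\mathcal P}_{N+1}=\frac{2}{m\pi}\big(\prod_{j=1}^m\frac1{N-m+2j-1}\big)\|u^{(m+1)}\|_{L^1(\Omega)}$, which is \eqref{ProjectionErrorA}.

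For (ii), when $m=0$ there is no product to telescope and the bound $|\hat u_n^C|\le\frac{2}{\pi n}\|u'\|_{L^1(\Omega)}$ is not summable, so a different argument is needed: I would instead use \eqref{DecayRate-4}/\eqref{anbound1} together with the elementary pointwise estimate $|u(x)-\pi_N^C u(x)|\le \frac1\pi\int_{-1}^1\frac{|u'(t)|}{\sqrt{1-t^2}}\big|\sum_{n>N}{}'\,(\cdots)\big|\,dt$; more simply, one notes $u(x)-\pi_N^C u(x)=\frac1\pi\int_{-1}^1 u'(t)\,\big(\text{Dirichlet-type kernel}\big)\,dt$ is dominated in absolute value by $\int_{-1}^1|u'(t)|\,dt$ because the relevant kernel is bounded by $1$ uniformly (this is the $m=0$ analogue of the Trefethen bound with the Chebyshev weight removed via the substitution $t=\cos\phi$). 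Hence $\|u-\pi_N^C u\|_{L^\infty(\Omega)}\le\|u'\|_{L^1(\Omega)}$, giving \eqref{upiNm0}. Finally, for (iii) with $m\ge N+1$, the tail $\sum_{n>N}|\hat u_n^C|$ must be split: for $N\le n\le m$ one uses the ``low-degree'' bound \eqref{anbound1} applied to $u^{(n)}$ (equivalently \eqref{DecayRate-4} with $u^{(n+1)}$), namely $|\hat u_n^C|\le\frac{2}{\pi(2n+1)!!}\|u^{(n+1)}\|_{L^1(\Omega)}$, summing these finitely many terms with the weights $c_n$ (the factor $2$ at $n=m$ absorbing the endpoint of the telescoped remainder for $n>m$, which is handled by part~(i) applied from level $m$), and reassembling yields \eqref{ProjectionErrorB}. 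The main obstacle I anticipate is purely bookkeeping: getting the even/odd telescoping boundary terms and the exact constant in \eqref{ProjectionErrorA} right, and correctly accounting for the $c_m=2$ factor in \eqref{ProjectionErrorB} when stitching the finite sum for $N\le n\le m$ to the telescoped tail for $n>m$; the analytic content is entirely contained in Theorem~\ref{DecayRate}, Corollary~\ref{unChat}, and the monotonicity of Gamma-function ratios already cited from \cite{Alzer1997MC,Bustoz1986MC}.
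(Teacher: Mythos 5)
Parts (i) and (iii) of your plan are essentially sound and, in substance, the paper's own route: the paper obtains \eqref{ProjectionErrorA} by letting $s\to 1$ in Theorem \ref{TruncatedChebyshevLinf} (whose proof is precisely a telescoping of the Gamma-ratio bounds), while you telescope the double-factorial products from Corollary \ref{unChat} directly; and your part (iii) is the paper's argument verbatim (split at level $m$, bound $\hat u_n^C$ for $N+1\le n\le m$ by \eqref{anbound1}, bound $u-\pi_m^C u$ by part (i) with $N$ replaced by $m$, the factor $c_m=2$ absorbing that tail). One slip in (i) needs fixing: with ${\mathcal P}_n=\prod_{j=1}^m (n-m+2j-1)^{-1}$ one has ${\mathcal P}_n-{\mathcal P}_{n+2}=2m\prod_{j=0}^m (n-m+2j+1)^{-1}$, which is \emph{smaller} than the summand $\prod_{j=0}^m (n-m+2j)^{-1}$ coming from Corollary \ref{unChat}, so your claimed identity is false and, used as an inequality, points the wrong way. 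The correct identity is ${\mathcal P}_{n-1}-{\mathcal P}_{n+1}=2m\prod_{j=0}^m (n-m+2j)^{-1}$; summing over $n\ge N+1$ the boundary terms are ${\mathcal P}_N+{\mathcal P}_{N+1}\le 2{\mathcal P}_N$, which yields exactly \eqref{ProjectionErrorA} (note also that your final line identifies ${\mathcal P}_{N+1}$ with $\prod_{j=1}^m(N-m+2j-1)^{-1}$, which is ${\mathcal P}_N$, not ${\mathcal P}_{N+1}$). This is an off-by-one bookkeeping error, repairable, not a conceptual one.

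The genuine gap is part (ii). The entire analytic content of \eqref{upiNm0} is the uniform bound on the tail kernel: writing $u-\pi_N^C u$ as in \eqref{unform=0}, one must prove $\bigl|\Psi_N^\infty(\varphi,\theta)\bigr|=\bigl|\sum_{n>N}\tfrac{\sin(n\varphi)\cos(n\theta)}{n}\bigr|\le \tfrac{\pi}{2}$ uniformly in $N$, $\varphi$, $\theta$. You assert that ``the relevant kernel is bounded by $1$ uniformly'' without identifying the kernel or justifying the bound, and this is precisely the nontrivial step: after the product-to-sum reduction \eqref{psiinfty} it amounts to bounding the tails $\sum_{n>N}\sin(n\vartheta)/n$, and since the partial sums of $\sum_n \sin(n\vartheta)/n$ overshoot their limit (Gibbs phenomenon), the constant $\pi/2$ is delicate. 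The paper needs the closed form \eqref{convfseries}, positivity of the partial sums, and the sharp Alzer--Koumandos inequality \eqref{convfseries2} with best constant $\alpha=0.66395\cdots$ \cite{Alzer2003MPCPS} to arrive at \eqref{ForSumSin-0}; nothing of this appears in your sketch. Your alternative suggestion of using \eqref{DecayRate-4}/\eqref{anbound1} cannot work for $m=0$ either, as you yourself note the $O(1/n)$ coefficient bounds are not summable. As written, part (ii) is asserted rather than proved.
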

\begin{proof}  From  Theorem \ref{TruncatedChebyshevLinf} with $s\to 1$  and \eqref{gammn12},   we obtain that for $1\le m\le N+1,$
\begin{equation}\label{FracLinfA00}
		\begin{split}
		\|u-\pi_N^C u\|_{L^\infty (\Omega)}&\le \frac {1 }{2^{m-1} m \pi}\frac{ \Gamma( ({N-m+1})/2)}{\Gamma( ({N+m+1})/2)} V_L^{(m)} 
\\
		&= \frac {2 }{ m \pi}\frac{ (N-m-1)!!}{(N+m+1)!!} V_L^{(m)}  
		=\frac {2 }  {m\pi }\bigg( \prod_{j=1}^m \frac {1} {N-m+2j-1}\bigg) V_L^{(m)}. 
		\end{split}
		\end{equation}
This gives \eqref{ProjectionErrorA}.
We now  prove \eqref{upiNm0}.
Using integration part parts leads to
\begin{equation}\begin{split}\label{unform=0}
&(u-\pi_N^C u)(x)  =\sum_{n=N+1}^{\infty}
\hat u_n^{C}T_n(x)=\sum_{n=N+1}^{\infty}\! \Big(\int_{0}^{\pi}u(\cos\varphi)\cos(n\varphi)d\varphi\Big) \cos(n\theta)\\
& =\frac{2}{\pi}\sum_{n=N+1}^{\infty} \Big(\int_{0}^{\pi}u'(\cos\varphi)\sin(n\varphi)\sin \varphi d\varphi \Big) \frac{\cos(n\theta)}{n} =\frac{2}{\pi}\int_{0}^{\pi}u'(\cos\varphi)\sin(n\varphi)\, \Psi^{\infty}_{N}(\varphi,\theta)\, d\varphi,
\end{split}\end{equation}
so we have
\begin{equation}\begin{split}\label{unform=0+}
\big|(u-\pi_N^C u)(x)\big|\le&
\frac{2}{\pi} \max_{\varphi\in [0,\pi]} \big|\Psi^{\infty}_{N}(\varphi,\theta)\big|\int_0^\pi |u'(\cos\varphi)| \sin \varphi \,  d\varphi
=\frac{2}{\pi} \max_{\varphi\in [0,\pi]}\big|\Psi^{\infty}_{N}(\varphi,\theta)\big|\, V_L^{(0)},  
\end{split}\end{equation}
for $x=\cos \theta,$  $\theta\in (0,\pi),$  where
\begin{equation}\label{psiinfty}
\Psi^{\infty}_{N}(\varphi,\theta)=\sum_{n=N+1}^{\infty}\!\!\frac{\sin(n\varphi) \cos(n\theta)}{n}
=\sum\limits_{n=N+1}^{\infty}\!\!\frac{\sin(n(\varphi+\theta))+\sin (n(\varphi-\theta))}{2n}.
\end{equation}

We next show that  for  $\vartheta\in {\mathbb R},$
\begin{equation}\label{ForSumSin-0}
\bigg|\sum\limits_{n=N+1}^{\infty}\!\!\frac{\sin(n\vartheta)}{n}\bigg| \le \frac{\pi}{2}.	
\end{equation}
In fact,  it suffices to derive this bound for $\vartheta\in (0,\pi),$ as the series  defines an odd, $2\pi$-periodic  function which vanishes at $\vartheta=0,\pi.$
It is known that
\begin{equation}\label{convfseries}
\sum _{n=1}^\infty\frac{\sin (n\vartheta)}{n}=\frac{\pi-\vartheta}{2}, \quad \vartheta\in (0,\pi).
\end{equation}
According to \cite{Alzer2003MPCPS},  we have that for $N\ge 2, $
\begin{equation}\label{convfseries2}
0< \sum _{n=1}^N \frac{\sin (n\vartheta)}{n}\le  \alpha (\pi-\vartheta), \quad \vartheta\in (0,\pi),
\end{equation}
with the best possible constant $\alpha=0.66395\cdots.$  As a direct consequence of \eqref{convfseries}-\eqref{convfseries2},  we have
\begin{equation}\label{nN1vars}
\Big(\frac 1 2 -\alpha\Big)(\pi-\vartheta)\le \sum\limits_{n=N+1}^{\infty}\!\!\frac{\sin(n\vartheta)}{n}  <  \frac{\pi-\vartheta}{2};\quad
\bigg|\sum\limits_{n=N+1}^{\infty}\!\!\frac{\sin(n\vartheta)}{n} \bigg| <  \frac{\pi-\vartheta}{2},
\end{equation}
for $N\ge 2$ and $\vartheta\in (0,\pi).$ In fact,  the bound \eqref{nN1vars} also holds for  $N=1,$   as by \eqref{convfseries},
$$
\sum _{n=2}^\infty\frac{\sin (n\vartheta)}{n}=\frac{\pi-\vartheta}{2}-\sin\vartheta< \frac{\pi-\vartheta}{2}.
$$
Thus, we complete the proof of \eqref{ForSumSin-0}.  The estimate \eqref{upiNm0} is a direct consequence of \eqref{unform=0+}-\eqref{ForSumSin-0}.

	Finally, we turn to the proof of the estimate \eqref{ProjectionErrorB}.    For $m\ge N+1$,  we use  \eqref{anbound1} to bound
	$\{\hat u_n^C\}_{n=N+1}^m,$ and use  \eqref{ProjectionErrorA} (with $N\to m$)  to derive
	\begin{equation}\label{SumUncNPlusOneToInf-2}
	\begin{split}
\big|u(x)&-\pi_N^C u(x)\big|\le \big|\pi_m^C u(x) - \pi_N^C u(x)\big| + \big|u(x)-\pi_m^C u(x)\big|\\
&
\le    \sum_{n=N+1}^{m} \frac {2}  {\pi (2n-1)!!}\,V_L^{(n-1)} 
 + \frac{2} {m(2m-1)!!\pi} V_L^{(m)} 
\\& \le  \frac {2}  {\pi (2N+1)!!} \bigg\{\sum_{n=N+1}^{m}  \frac{(2N+1)!!} {(2n-1)!!} V_L^{(n-1)} 
+ \frac{2m+1} m   \frac{(2N+1)!!} {(2m+1)!!} V_L^{(m)} 
\bigg\}\\
&\le  	 \frac {2}  {\pi (2N+1)!!} \sum_{n=N}^{m}   \frac{c_n(2N+1)!! } {(2n+1)!!} V_L^{(n)}, 
\end{split}\end{equation}
 where $c_n=1$ for all $N\le n\le m-1$ and $c_m=2.$
 \end{proof}

\begin{rem}\label{differentNote} {\em Taking a different route,  we improve the existing bounds in the following senses.
\begin{itemize}
\item[(i)] The Chebyshev-weighted $1$-norm in Lemma \ref{trensthaImp}  is replaced  by the Legendre-weighted $1$-norm.
\item[(ii)] Sharper  bound is obtained than the best one in \cite[Thm 2.1]{Majidian2017ANM}.
\item[(iii)] We obtain the ``stability" result, that is, $m=0$  in   \eqref{estTrestA}, and  the estimate for the case $n\le m+1$ in
\eqref{anbound1}, which are new.
\end{itemize} }
\end{rem}


\section{Analysis of interpolation, quadrature and endpoint singularities}\label{Sect6Analysis}
\setcounter{equation}{0}
\setcounter{lmm}{0}
\setcounter{thm}{0}
\setcounter{cor}{0}


In this section, we discuss the extension of the main results to the error estimates of the related interpolation, quadratures and also special types of functions with endpoint singularities. We then conclude the paper with some final remarks.

\subsection{Analysis of   interpolations and quadrature}

As remarked in  \cite{Xiang2010NM,Trefethen2013Book,Majidian2017ANM}, the error analysis of several  widely-used interpolations and quadrature boils down to estimating the coefficients $\{\hat u_n^C\}$ and their partial sums.  We refer to
\cite{Majidian2017ANM} for a list of more than six examples. Here, we just examine two cases  and present  sharp bounds by using our
new estimates on  the  decay of the expansion coefficients.
\begin{itemize}
\item[(i)] Interpolation and quadrature at Chebyshev-Gauss  (CG) points $\{x_j\}_{j=0}^N$, i.e., zeros of $T_{N+1}(x):$
\begin{equation}\label{qnqnCG}
\begin{split}
& ({\mathcal I}_N^{C}u)(x)=\sum_{n=0}^N{'} b_n T_n(x),
\quad  b_n={\displaystyle
	\frac{2}{N+1}\sum_{j=0}^Nu(x_j)T_n(x_j)},
\end{split}	
\end{equation}
and
\begin{equation}\label{quadrCC}
 \int^1_{-1}u(x ) (1-x^2)^{-1/2}dx=\frac \pi {N+1}\sum_{j=0}^Nu(x _j) +\mathcal R_N^C[u].
\end{equation}
Then we have (cf. \cite{Xiang2010NM} and \cite[(6)]{Riess1969SINUM})
\begin{equation}\label{caseiA}
\| {\mathcal I}_N^{C}u-u\|_{L^\infty(\Omega)}\le 2\sum_{n=N+1}^\infty |\hat u_n^C|; \quad
\mathcal R_N^C[u]= \pi \sum_{k=1}^{\infty}(-1)^k\hat u_{2k(N+1)}^{C}.
\end{equation}
\item[(ii)] Legendre-Gauss  quadrature rule at the zeros $\{x_j\}_{j=0}^N$ of the Legendre polynomial $P_{N+1}(x)$ and   with quadrature weights $\{\omega_j\}_{j=0}^N$ (cf.  \cite[P. 96]{ShenTangWang2011,Majidian2017ANM}):
\begin{equation}\label{quadrGL}
 \int^1_{-1}u(x ) dx=\sum_{j=0}^N u(x _j)\omega_j +\mathcal R_N^L[u].
\end{equation}
Then we have (cf.  \cite{Trefethen2008SIREV,Majidian2017ANM}):
\begin{equation}\label{RnLu}
\big|\mathcal R_N^L[u]\big|\le \frac {32}{15} \sum_{n=N+1}^\infty |\hat u_{2n}^C|.
\end{equation}

\end{itemize}

Using Theorem \ref{IdentityForUn}  and the argument similar to Theorem \ref{TruncatedChebyshevLinf} (also see Remark \ref{expbounds}), we can obtain the following estimates. 
\begin{theorem}\label{InterquadLGC} Given $\theta\in (-1,1),$ if  $u\in {\mathbb  W}^{m+s}_{\theta}(\Omega) $ with
$s\in (0,1)$ and integer  $m\ge 0$, then for $m+s>1,$ we have
\begin{equation}\label{UnUnc}
\begin{split}
& \|u-{\mathcal I}_N^C u\|_{L^\infty (\Omega)}\le CN^{1-m-s} U_\theta^{m,s}; \quad  \|u-{\mathcal  I}_N^C u\|_{L^2_\omega (\Omega)}\le CN^{\frac 1 2-m-s} U_\theta^{m,s},
\end{split}
\end{equation}
and
\begin{equation}\label{CGInterpolant-7}
\begin{split}
&|\mathcal R_N^C[u]|\le C N^{-(m+s)} U_\theta^{m,s};\quad  |\mathcal R_N^L[u]|\le C N^{-(m+s)} U_\theta^{m,s},
\end{split}
\end{equation}
where $C$ is  a positive constant independent of $N$ and $u.$
\end{theorem}
\begin{proof} We just provide the proof of the $L^2_\omega$-error of the CG interpolation, since the others can be proved by summing up the bounds of $\{\hat u_n^C\}$ in Theorem \ref{IdentityForUn} and Remark \ref{expbounds}.
Note that
\begin{align}\label{CGInterpolant-2}
{\mathcal  I}_N^C u (x)-u(x)={\mathcal  I}_N^C u (x)-\pi_N^C u+\pi_N^C u-u
=
\sum_{n=0}^N{'}(b_n-\hat u_n^{C})T_n(x)+\pi_N^C u-u.
\end{align}
Hence, we obtain
\begin{equation}\label{CGInterpolant-4}
\begin{split}
\|u-{\mathcal  I}_N^C u\|_{L^2_{\omega}(\Omega)}^2\le\frac{\pi}{2}\sum_{n=0}^N{'}|b_n-\hat u_n^{C}|^2+\|u-\pi_N^C u\|_{L^2_{\omega}(\Omega)}^2.
\end{split}
\end{equation}
Recall that (cf. \cite[(4.56)]{Boyd2000Book}):
\begin{align}\label{CGInterpolant-5}
b_n-\hat u_n^{C}=\sum_{k=1}^{\infty}(-1)^k(\hat u_{2k(N+1)-n}^{C}+\hat u_{2k(N+1)+n}^{C}), \quad n=0,\cdots,N.
\end{align}
Using \eqref{BoundForUnRealB} and \eqref{gamratioA}, we find that  for $N\gg 1,$  $\sigma=m+s>1$ and $n=0,\cdots,N,$
\begin{equation}\label{CGInterpolant-6}
\begin{split}
|b_n-\hat u_n^{C}|&\le \sum_{k=1}^{\infty}\big\{|\hat u_{2k(N+1)-n}^{C}|+|\hat u_{2k(N+1)+n}^{C}|\big\}
\le  \frac{U_\theta^{m,s}}{2^{\sigma-1} \pi}2\sum_{k=1}^{\infty}\frac{ \Gamma( ({2k(N+1)-n-\sigma}+1)/ 2)}{\Gamma( ({2k(N+1)-n+\sigma}+1)/2)}\\
&\le  \frac{U_\theta^{m,s}}{2^{\sigma-1} \pi}2\sum_{k=1}^{\infty}\frac{ \Gamma( ({2k(N+1)-N-\sigma}+1)/ 2)}{\Gamma( ({2k(N+1)-N+\sigma}+1)/2)}\le  CN^{-\sigma}U_\theta^{m,s}.
\end{split}
\end{equation}
From \eqref{CGInterpolant-4}, we obtain from   a direct calculation and Remark \ref{expbounds}  the $L^2_{\omega}$-estimate.
%
\end{proof}

\subsection{Analysis of endpoint singularities}\label{endptsingula}
The previous discussions were centred around the Chebyshev expansions and approximation of singular functions with interior singularities.
In what follows, we extend the results  to the cases with $\theta=\pm 1$, and study endpoint singularities.  To fix the idea, we shall focus on
the exact formulas and decay rate of the Chebyshev expansion coefficients, since it is the basis to derive many other related error bounds.

Let ${\mathbb W}_\pm^{m+s}(\Omega)$ be the fractional Sobolev-type spaces defined in  \eqref{resultpm1}. The following representation of $\hat u_n^C$ is a direct consequence of Theorem \ref{IdentityForUn}.
\begin{theorem}\label{Bndsingular} If  $u\in {\mathbb  W}^{\sigma}_{+}(\Omega) $ with $\sigma:=m+s,$ $s\in (0,1)$ and   $m\in  {\mathbb N}_0$, then for $n\ge \sigma>1/2,$
	\begin{equation}\label{HatUnCaseC0wP}
	\begin{split}
	&\hat u_n^{C}=(-1)^{ n+[ n-s]} C_{\sigma}\Big\{\int_{-1}^1   {}_{x}^R D_{1}^{s} \, u^{(m)}(x) \,  {}^{l}G_{n-\sigma}^{(\sigma)}(x) \, \omega_{\sigma}(x)\, dx\\
&\quad\quad\quad
	+ \big\{{}_{x}I_{1}^{1-s}u^{(m)}  (x) \, {}^{l}G_{n-\sigma}^{(\sigma)}(x)\, \omega_{\sigma}(x)\big\}\big|_{x=1}\Big\}.
	\end{split}
	\end{equation}
Similarly, if  $u\in {\mathbb  W}^{\sigma}_{-}(\Omega) $ with $\sigma:=m+s,$ $s\in (0,1)$ and   $m\in  {\mathbb N}_0$, then for $n\ge \sigma>1/2,$ 	
	\begin{equation}\label{HatUnCaseC0wQ}
	\begin{split}
	&\hat u_n^{C}=C_{\sigma}\Big\{\int_{-1}^1  \!\! {}_{-1}^{~~R\!\!} D_{x}^{s} \, u^{(m)}(x) \,  {}^{r\!}G_{n-\sigma}^{(\sigma)}(x) \, \omega_{\sigma}(x)\, dx
	+ \big\{{}_{-1}I_{x}^{1-s}u^{(m)}  (x) \, {}^{r\!}G_{n-\sigma}^{(\sigma)}(x)\, \omega_{\sigma}(x)\big\}\big|_{x=-1}\Big\}.
	\end{split}
	\end{equation}
Here, $\omega_{\lambda}(x)=(1-x^2)^{\lambda-1/2}$ and $C_{\sigma}:= ({\sqrt{\pi}\,2^{\sigma-1}\Gamma(\sigma+1/2)})^{-1}.$
\end{theorem}

We next apply the formulas to several typical types of singular functions.   We first  consider  $u(x)=(1+x)^\alpha$ with $\alpha>-1/2$ and $\alpha\not \in {\mathbb N}_0$ (see, e.g., \cite{Tuan1972MC,Gui1986NM}).  Following the proof of Proposition \ref{Wspacproof},
 we  have  $u\in {\mathbb  W}^{\alpha+1}_{-}(\Omega).$
 Then   using \eqref{HatUnCaseC0wQ},  one  obtains the exact formula of
the Chebyshev expansion coefficient. Equivalently, one can derive it by  taking $\theta\to -1+$ in \eqref{SpecialCase}.  More precisely,
by \eqref{obsvers} and \eqref{SpecialCase},
\begin{equation}\label{SpecialCaseAB}
	\begin{split}
	\hat u_n^{C} &=\frac{\Gamma(\alpha+1)}{2^{\alpha}\Gamma(\alpha+3/2)\sqrt{\pi}}\lim_{\theta\to -1^+}\big\{{}^{r\!}G_{n-\alpha-1}^{(\alpha+1)}(\theta) \omega_{\alpha+1}(\theta)
-(-1)^{n+[n-\alpha]}  \,{}^{l}G_{n-\alpha-1}^{(\alpha+1)}(\theta) \omega_{\alpha+1}(\theta) \big\}\\
&=\frac{\Gamma(\alpha+1)}{2^{\alpha}\Gamma(\alpha+3/2)\sqrt{\pi}}\lim_{\theta\to -1^+}{}^{r\!}G_{n-\alpha-1}^{(\alpha+1)}(\theta) \omega_{\alpha+1}(\theta).
	\end{split}
	\end{equation}
Using    \eqref{defBS0B} leads to that for $\lambda>1/2,$
\begin{equation}\label{weithWithGfunat-1}
\begin{split}
\lim_{x\to -1^+} \omega_{\lambda}(x) \,  {}^{r\!}G_\nu^{(\lambda)}(x)&=-2^{2\lambda-1}
\frac{\sin(\nu\pi)}\pi
\frac{\Gamma(\lambda-1/2)\Gamma(\lambda+1/2)\Gamma(\nu+1)}{\Gamma(\nu+2\lambda)}.
\end{split}	
\end{equation}
Therefore,  from \eqref{DuplicationFormula}  and \eqref{SpecialCaseAB}-\eqref{weithWithGfunat-1}, we obtain the formula:
\begin{equation}\label{thetaeq-1}
\begin{split}
\hat u_n^{C}
&=\frac{(-1)^{n+1}\sin(\pi \alpha)\Gamma(2\alpha+1)}{2^{\alpha-1}\pi}
\frac{\Gamma(n-\alpha)}{\Gamma(n+\alpha+1)}, \quad n\ge \alpha+1,
\end{split}
\end{equation}
and for large $n,$ we have $|\hat u_n^{C}|=O(n^{-2\alpha-1}).$

\vskip 4pt
With the aid of \eqref{thetaeq-1}, we next consider a  more general case: $u(x)=(1+x)^\alpha g(x)$ with  $g(x)$ being a sufficiently smooth function.
Here, we need to use the formula of $\hat u_n^C$ for $(1+x)^\alpha$ with  $n<\alpha+1$. Taking $m=n$ in \eqref{IdentityForUn-1} and using the property  of the Beta function, yields
	\begin{equation}\label{nsmall}
\begin{split}
\hat u_n^{C}&=  \frac{1}{\sqrt{\pi}\,2^{n-1}\Gamma(n+1/2)} \int_{-1}^{1} u^{(n)}(x)\, G_{0}^{(n)}(x) \omega_{n}(x)\,dx
\\&=  \frac{1}{\sqrt{\pi}\,2^{n-1}\Gamma(n+1/2)}  \frac{\Gamma(\alpha+1)}{\Gamma(\alpha-n+1)} \int_{-1}^{1}(1+x)^{\alpha-n}(1-x^2)^{n-1/2}\,dx
\\&=  \frac{2^{\alpha+1}\Gamma(\alpha+1)\Gamma(\alpha+1/2)}{\sqrt{\pi}\Gamma(\alpha-n+1)\Gamma(\alpha+n+1)}.
\end{split}
\end{equation}
Using the Taylor expansion of $g(x)$ at $x=-1,$ we obtain from \eqref{thetaeq-1}-\eqref{nsmall} that
\begin{equation}\label{anymptotic}
\begin{split}
\hat u_n^{C}=&  \sum _{l=0}^{[n-1-\alpha]}\frac{ g^{(l)}(-1)}{l!}\frac{(-1)^{n+1}\sin(\pi (\alpha+l))\Gamma(2\alpha+2l+1)}{2^{\alpha+l-1}\pi}
\frac{\Gamma(n-\alpha-l)}{\Gamma(n+\alpha+l+1)}\\
&+\sum _{l=[n-\alpha]}^{\infty}\frac{ g^{(l)}(-1)}{l!} \frac{2^{\alpha+l+1}\Gamma(\alpha+l+1)\Gamma(\alpha+l+1/2)}{\sqrt{\pi}\Gamma(\alpha+l-n+1)\Gamma(\alpha+l+n+1)}\\
=&\frac{(-1)^{n+1}g(-1)\sin(\pi \alpha)\Gamma(2\alpha+1)}{2^{\alpha-1}\pi}
n^{-2\alpha-1}+O(n^{-2\alpha-3}),
\end{split}
\end{equation}
where we used \eqref{gamratioA}.

Finally, we consider the singular function:    $u(x)=(1+x)^\alpha\ln (1+x)$. Using    \eqref{LogFun}, we derive from  	\eqref{SpecialCasetheta=01} and the L'Hospital's rule that
	\begin{equation*}
	\begin{split}
	\hat u_{n}^C&= \frac 2\pi  \int_{-1}^1 \Big\{\lim_{\varepsilon\to 0}\frac{(1+x)^{\varepsilon+\alpha}-(1+x)^\alpha}{\varepsilon}\Big\}\frac{T_{n}(x)} {\sqrt{1-x^2}} dx \\
	&=(-1)^{n+1}\frac{2}{\pi}\lim_{\varepsilon\to 0}\frac{1}{\varepsilon}\Big\{ \frac{\sin(\pi (\alpha+\varepsilon))\Gamma(2\alpha+2\varepsilon+1)\Gamma(n-\alpha-\varepsilon)}{2^{\alpha+\varepsilon}\Gamma(n+\alpha+\varepsilon+1)}
	 -\frac{\sin(\pi \alpha)\Gamma(2\alpha+1)\Gamma(n-\alpha)}{2^{\alpha}\Gamma(n+\alpha+1)}\Big\}\\
	&=\frac{(-1)^{n+1}\Gamma(2\alpha+1)\Gamma(n-\alpha)}{\pi 2^{\alpha-1}\Gamma(n+\alpha+1)}	 \big\{\pi\cos(\alpha\pi)
	+ \sin(\alpha\pi)
	\big(2\psi(2\alpha+1)\\
	&\quad
	-\ln 2-\psi(n+\alpha+1)-\psi(n-\alpha)\big)\big\}.
	\end{split}
	\end{equation*}
	In view of \eqref{psizfun}, we can  the asymptotic behaviour
	\begin{equation*}
	\begin{split}
	\hat u_{n}^C
	&=\frac{(-1)^{n+1}\Gamma(2\alpha+1)}{\pi 2^{\alpha-1}}n^{-2\alpha-1}	 \big\{\pi\cos(\alpha\pi)
	+ \sin(\alpha\pi)
	\big(2\psi(2\alpha+1)
	-\ln 2\\
	&\quad -2\ln n \big)\big\}+O(n^{-2\alpha-3}\ln n)\,\sin (\alpha\pi)+O(n^{-2\alpha-3}).
	\end{split}
	\end{equation*}

\begin{rem}\label{estACheb}{\em  With the above analysis of the expansion coefficients, we can then obtain directly the optimal estimates for  the Chebyshev approximation to these specific singular functions. More precisely,   for $u(x)=(1+x)^\alpha g(x)$ with  $g(x)$ being a sufficiently smooth function, we have
\begin{equation}\label{smothhfuncA}
\|u-\pi_N^C u\|_{L^\infty(\Omega)}\le CN^{-2\alpha},\quad \|u-\pi_N^C u\|_{L^2_\omega(\Omega)}\le CN^{-2\alpha-1/2},
\end{equation}
and for  $u(x)=(1+x)^\alpha\ln (1+x),$ we have
\begin{equation}\label{smothhfuncB}
\|u-\pi_N^C u\|_{L^\infty(\Omega)}\le C(\ln N)N^{-2\alpha},\quad \|u-\pi_N^C u\|_{L^2_\omega(\Omega)}\le C(\ln N) N^{-2\alpha-1/2}.
\end{equation}
Compared with the interior singularities {\rm(}see \eqref{newbndaddbus} and  \eqref{modeeqnA}{\rm)}, a higher  convergence order $O(N^{-\alpha})$ is observed which is as expected.
}\end{rem}

\vskip 6pt

\subsection{Concluding remarks} Broadly speaking, we position this work as our first attempt to show how the RL  fractional calculus   can  alter the fundamental polynomial approximation theory.  Some estimates and bounds  herein are completely new,  or significantly improve the existing results.

More precisely,  we introduce a new theoretical framework of fractional Sobolev-type spaces  for orthogonal polynomial approximations to functions with limited regularities (or interior/endpoint singularities).
The proposed  spaces are  naturally arisen from the analytic representations of the expansion coefficients involving RL fractional integrals/derivatives and GGF-Fs.
We present a collection of notable properties of the new family of GGF-Fs, and  derive optimal estimates of Chebyshev approximations in various norms for a wide class of singular functions.     The analysis techniques can be extended to general Jacobi approximations.
 We are confident that this study,  together with our follow-up works,  will have far-reaching impact on  numerical analysis of $p$-version and $hp$-version for singular problems.


\end{document}